\crefname{hypothesis}{Hypothesis}{Hypotheses}
\newcommand{\ci}{\mbox{${}\perp\mkern-11mu\perp{}$}}
\newenvironment{customasm}[1]
  {\innercustomasm}
  {\endinnercustomasm}
\DeclareMathOperator*{\argmin}{argmin}
\DeclareMathOperator*{\argmax}{argmax}
\DeclareMathOperator{\supp}{supp}
\begin{document}

\title{\Large Nonparametric inference under shape constraints: past, present and future}
    \author{Richard J. Samworth\thanks{Statistical Laboratory, University of Cambridge (\email{r.samworth@statslab.cam.ac.uk}, \url{http://www.statslab.cam.ac.uk/~rjs57/})}}

\date{}

\maketitle

\begin{abstract}
We survey the field of nonparametric inference under shape constraints, providing a historical overview and a perspective on its current state.  An outlook and some open problems offer thoughts on future directions.
\end{abstract}

\section{Introduction.}

Traditionally, we think of statistical methods as being divided into parametric approaches, which can be restrictive, but where estimation is typically straightforward (e.g.~using maximum likelihood), and nonparametric methods, which are more flexible but often require careful choices of tuning parameters.  Nonparametric inference under shape constraints sits somewhere in the middle, seeking in some ways the best of both worlds.  The origins of the field are often traced  to \citet{grenander1956theory}, who proved that there exists a unique maximum likelihood estimator (MLE) of a decreasing density on the non-negative half-line (and was able to characterise it explicitly).  Thus, even though the class of decreasing densities is infinite-dimensional, statistical estimation can proceed in a familiar fashion, with no tuning parameters to choose.

Through the remainder of the 20th century and into the first 10-15 years of the current millennium, the field evolved in several different directions.  On the one hand, the monotonic constraint was incorporated into other core statistical problems, such as regression \citep{ayer1955empirical,brunk1955maximum,vaneeden1956maximum} and hazard function estimation \citep{rao1970estimation}.  Theoreticians were enticed by the non-standard cube-root rates of convergence and risk bounds \citep{rao1969estimation,groeneboom1985estimating,birge1987estimating,birge1989grenander,zhang2002risk,chatterjee2015risk}, while the Pool Adjacent Violators Algorithm provided a linear time algorithm for computation \citep{brunk1972statistical}.  Convex regression and density estimation then became the next natural challenge \citep{groeneboom2001estimation,guntuboyina2015global}, while S-shaped function estimation is a more recent topic \citep{feng2022nonparametric}.  Further developments and historical references are provided in the books by \citet{brunk1972statistical}, \citet{robertson1988order} and \citet{groeneboom2014nonparametric}, as well as the 2018 special issue of the journal {\emph{Statistical Science}} \citep{samworth2018special}. 

Over the last 10-15 years or so, problems in \emph{multivariate} shape-constrained inference have received significant focus.  In particular, the estimation of \emph{log-concave densities}, i.e.~those densities $f$ for which $\log f$ is concave, has emerged as a central topic within the field.  This definition works equally well in $d$ dimensions as in the univariate case, and moreover the class is closed under marginalisation, conditioning, convolution and linear transformations, making it a very natural infinite-dimensional generalisation of the class of Gaussian densities.  Once again, a unique MLE exists, so we retain the attraction of a fully automatic, nonparametric procedure.  On the other hand, since the characterisation of the MLE is now less explicit, considerable effort has been devoted to its efficient computation.  The period from roughly 2010 to the early 2020s saw rapid and exciting developments in our understanding of log-concave density estimation and related  problems such as multivariate isotonic regression \citep{han2019isotonic,deng2020isotonic,pananjady2022isotonic} and convex regression in $d \geq 2$ dimensions \citep{kur2024convex}.  

Sections~\ref{Sec:Grenander} and~\ref{Sec:LogConcaveDensityEstimation} provide a brief tour of results in shape-constrained inference up to the last year or two, focusing on the Grenander estimator and log-concave density estimation.  However, now that most of the key questions related to the core topics of density estimation and regression have been answered, the field has moved in another interesting direction.  We have witnessed a significant broadening of the scope of shape-constrained ideas and techniques, so that they are now incorporated as part of more elaborate statistical tasks.  To illustrate these latest developments, we present an application of shape-constrained inference in linear regression due to \citet{feng2025optimal} in Section~\ref{Sec:LinearRegression}.  Here, the goal is to improve on the ordinary least squares estimator when the error density is non-Gaussian, via an $M$-estimator with a data-driven, convex loss function, designed to minimise the asymptotic variance of the resulting estimator of the vector of regression coefficients.  In Section~\ref{Sec:OtherExamples}, we briefly mention two other examples of very recent ways in which shape constraints have been assimilated into modern statistical methods, in subgroup selection \citep{muller2025isotonic} and conditional independence testing \citep{hore2025testing}.  We conclude with an outlook and some open problems.     

The following notation is used throughout the paper.  For $n \in \mathbb{N}$, we write $[n] := \{1,\ldots,n\}$, and for $x \in \mathbb{R}$, we write $x_+ := \max(x,0)$ and $x_- := \max(-x,0)$.  The Euclidean norm is denoted $\|\cdot\|$.  If  $(\mathcal{X},\mathcal{A})$ is a measurable space and $P,Q$ are probability measures on $\mathcal{X}$, then their \emph{total variation distance} is $\mathrm{TV}(P,Q) := \sup_{A \in \mathcal{A}} |P(A) - Q(A)|$.  If $P,Q$ have densities $p,q$ with respect to a $\sigma$-finite measure $\mu$ on $(\mathcal{X},\mathcal{A})$, then we define their \emph{Hellinger distance} by $\mathrm{H}(P,Q) := \bigl\{\int_{\mathcal{X}} (p^{1/2} - q^{1/2})^2 \, d\mu\bigr\}^{1/2}$ and the \emph{Kullback--Leibler divergence} from $Q$ to $P$ by $\mathrm{KL}(P,Q) := \int_{\mathcal{X}} p \log(p/q) \, d\mu$.

\section{The Grenander estimator}
\label{Sec:Grenander}

Let $\mathcal{G}$ denote the set of all left-continuous, decreasing densities $g:(0,\infty)\rightarrow [0,\infty)$.  This is an infinite-dimensional convex set under pointwise addition and scalar multiplication.  Our first goal is to introduce a modern approach to shape-constrained estimation via a population-level projection framework.  For a general probability measure $Q$ on $(0,\infty)$, let $\mathcal{G}_Q$ be the set of all $g \in \mathcal{G}$ for which the log-likelihood functional
\[
  L(g,Q) := \int_{(0,\infty)} \log g \,dQ
\] 
is well-defined, i.e.~at least one of $\int_{(0,\infty)}(\log g)_+\,dQ$ and $\int_{(0,\infty)}(\log g)_-\,dQ$ is finite, with $\log 0:=-\infty$.  

In Proposition~\ref{thm:Lstar} below we characterise precisely those probability measures $Q$ on $(0,\infty)$ for which
\[
  L^*(Q):=\sup_{g\in\mathcal{G}_Q}L(g,Q)
\]
is finite.  This will allow us to establish in Theorem~\ref{thm:GrenProj} that for such $Q$, there exists a unique maximiser of $g \mapsto L(g,Q)$ over $\mathcal{G}$, namely the left derivative of the least concave majorant of the distribution function of~$Q$.  In the case where $Q$ is the empirical distribution of independent and identically distributed positive random variables, such a maximiser is the MLE.  

\begin{proposition}[\citealp{samworth2025modern}]
\label{thm:Lstar}
Let $Q$ be a Borel probability measure on $(0,\infty)$. Then $\mathcal{G}_Q=\mathcal{G}$ if and only if $\int_{(0,\infty)}(\log x)_-\,dQ(x)<\infty$. Moreover, we have the following trichotomy:
\begin{enumerate}[\itshape(a)]
\item If $\int_{(0,\infty)}(\log x)_+\,dQ(x)=\infty$, then $L^*(Q)=-\infty$.
\item If $\int_{(0,\infty)}(\log x)_+\,dQ(x)<\infty=\int_{(0,\infty)}(\log x)_-\,dQ(x)$,
then $L^*(Q)=\infty$.
\item If $\int_{(0,\infty)}|\log x|\,dQ(x)<\infty$, 
then $L^*(Q)\in\mathbb{R}$.
\end{enumerate}
\end{proposition}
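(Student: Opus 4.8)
The whole argument hinges on one elementary fact: since every $g\in\mathcal{G}$ is decreasing, $xg(x)\le\int_0^x g\le 1$, so $g(x)\le 1/x$ and $\log g(x)\le-\log x$ for all $x>0$; taking positive and negative parts, $(\log g)_+\le(\log x)_-$ and $(\log g)_-\ge(\log x)_+$ pointwise on $(0,\infty)$. The first inequality already gives one direction of the opening claim: if $\int(\log x)_-\,dQ<\infty$ then $\int(\log g)_+\,dQ<\infty$ for every $g\in\mathcal{G}$, so $L(\cdot,Q)$ is everywhere well defined and $\mathcal{G}_Q=\mathcal{G}$; integrating $\log g\le-\log x$ in this case also yields $L(g,Q)\le\int(\log x)_-\,dQ-\int(\log x)_+\,dQ$, which will supply the upper bound on $L^*(Q)$ in part~(c).

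For the converse of the opening claim I argue by contraposition. Assuming $\int(\log x)_-\,dQ=\infty$, I will produce a single $g_0\in\mathcal{G}\setminus\mathcal{G}_Q$, i.e.\ with $\int(\log g_0)_+\,dQ=\int(\log g_0)_-\,dQ=\infty$. The key object is the density $g^*(x):=\{\pi x(1+(\log x)^2)\}^{-1}$, the law of $e^X$ for $X$ standard Cauchy: the substitution $u=\log x$ shows $\int g^*=1$, its logarithmic derivative equals $-x^{-1}\{1+2\log x/(1+(\log x)^2)\}\le 0$ because $2t/(1+t^2)\in[-1,1]$, so $g^*$ is decreasing, and it is continuous; thus $g^*\in\mathcal{G}$. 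Crucially $g^*$ diverges like $1/x$ at the origin: $\log g^*(x)=-\log\pi+\log(1/x)-\log\{1+(\log(1/x))^2\}\ge\frac12\log(1/x)$ for all $x$ below some $\eta\in(0,1)$. I take $g_0$ to be $g^*$ restricted to $(0,b]$ and renormalised, where $b\in(0,\sup\supp Q)$ is chosen (this interval is nonempty); for such $b$ one has $Q((b,\infty))>0$ by definition of $\sup\supp Q$. Since $\log g_0$ differs from $\log g^*$ by a constant, the displayed estimate together with $\int_{(0,1)}\log(1/x)\,dQ=\infty$ and $\int_{[\eta,1)}\log(1/x)\,dQ<\infty$ forces $\int(\log g_0)_+\,dQ=\infty$; and $g_0$ vanishes on $(b,\infty)$ while $Q((b,\infty))>0$, so $\int(\log g_0)_-\,dQ=\infty$. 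Hence $g_0\notin\mathcal{G}_Q$.

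The trichotomy then follows quickly. In case~(a), $(\log g)_-\ge(\log x)_+$ gives $\int(\log g)_-\,dQ=\infty$ for every $g\in\mathcal{G}$; so any $g\in\mathcal{G}_Q$ must have $\int(\log g)_+\,dQ<\infty$ (one part being finite), whence $L(g,Q)=-\infty$. As $\mathcal{G}_Q$ contains the uniform density on $(0,1]$, it is nonempty, so $L^*(Q)=-\infty$. For (b) and (c) I use $g^*$ itself. On $\{g^*<1\}$, $(\log g^*)_-=\log\pi+\log x+\log\{1+(\log x)^2\}$; since $\{g^*<1\}$ is bounded away from $0$ (the map $x\mapsto x(1+(\log x)^2)$ is increasing) and $\log\{1+(\log x)^2\}\le\log 2+2\log x$ for $x\ge e$, the integrand is there at most a constant plus a constant multiple of $(\log x)_+$, so $\int(\log g^*)_-\,dQ<\infty$ whenever $\int(\log x)_+\,dQ<\infty$. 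In case~(c), this and $\int(\log g^*)_+\,dQ\le\int(\log x)_-\,dQ<\infty$ give $L(g^*,Q)\in\mathbb{R}$, so $L^*(Q)\ge L(g^*,Q)>-\infty$, and with the first paragraph's upper bound, $L^*(Q)\in\mathbb{R}$. In case~(b), $\int(\log x)_-\,dQ=\infty$ forces $\int(\log g^*)_+\,dQ=\infty$ by the near-origin estimate above, while $\int(\log g^*)_-\,dQ<\infty$ still holds (as $\int(\log x)_+\,dQ<\infty$); so $L(g^*,Q)=\infty$ and $L^*(Q)=\infty$.

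I expect the real difficulty to be in isolating the correct witness density. It must diverge like $1/x$ up to polylogarithmic factors at the origin, so that $\int(\log g^*)_+\,dQ$ is infinite exactly when $\int(\log x)_-\,dQ=\infty$, and yet decay as slowly as any integrable decreasing density can in the tail, so that $\int(\log g^*)_-\,dQ$ is finite exactly when $\int(\log x)_+\,dQ<\infty$; verifying that one explicit choice --- here the log-Cauchy density --- achieves both while remaining genuinely decreasing is the crux. A minor additional point is that this density cannot be used directly in the opening claim when $Q$ has a light right tail, which is why it must be truncated to $(0,b]$ with $b$ inside $\supp Q$.
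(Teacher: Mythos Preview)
Your argument is correct and rests on the same key observation that the paper singles out, namely that every $g\in\mathcal{G}$ satisfies $g(x)\le 1/x$ (the paper phrases this as $\sup_{g\in\mathcal{G}}g(x)=1/x$); the pointwise inequalities $(\log g)_+\le(\log x)_-$ and $(\log g)_-\ge(\log x)_+$ that drive your proof are exactly the content of that fact.

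The paper does not give a detailed proof beyond this one-line hint, so a direct comparison is limited, but one point is worth noting. The statement $\sup_{g\in\mathcal{G}}g(x)=1/x$ suggests an approach to parts~(b) and~(c) via a \emph{sequence} of densities approximating $x\mapsto 1/x$ (e.g.\ suitably truncated and normalised versions), whereas you instead identify a single explicit witness, the log-Cauchy density $g^*(x)=\{\pi x(1+(\log x)^2)\}^{-1}$, whose log behaves like $-\log x$ up to doubly-logarithmic corrections at both ends. This buys you a cleaner and more self-contained argument: the same $g^*$ handles the lower bound in~(c), the divergence in~(b), and (after truncation) the converse of the opening claim, with only elementary calculus needed to verify monotonicity and the relevant growth estimates. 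Your observation that truncation is needed for the opening claim when $Q$ has a light right tail is exactly right, and the choice of $b<\sup\supp Q$ is the correct fix.
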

The proof of Proposition~\ref{thm:Lstar} is based on the fact that $\sup_{g \in \mathcal{G}} g(x) = 1/x$ for all $x \in (0,\infty)$.  

Given a distribution function $G$ on $(0,\infty)$, it is convenient to let $\mathrm{ldlcm}(G)$ denote the left derivative of its least concave majorant.  We are now in a position to state our main projection result.
\begin{theorem}[\citealp{samworth2025modern}]
\label{thm:GrenProj}
For a Borel probability measure $Q$ on $(0,\infty)$ with distribution function~$G$, let $g^*\equiv g^*(Q) := \mathrm{ldlcm}(G)$.  If $\int_{(0,\infty)}|\log x|\,dQ(x)<\infty$, then 
\[
g^* = \argmax_{g\in\mathcal{G}}L(g,Q).
\]
Moreover, $\sup\{x \in (0,\infty):g^*(x) > 0\} = \inf\{x\in (0,\infty):G(x)=1\}\in (0,\infty]$, and $g^*$ is constant on any interval $(a,b]$ with $Q\bigl((a,b)\bigr) = 0$.
%
\end{theorem}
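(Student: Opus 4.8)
The plan is to exploit the explicit description of $g^*$ as $\hat G'$ (left derivative), where $\hat G := \mathrm{lcm}(G)$ denotes the least concave majorant of $G$, together with the elementary bound $\log u \le u-1$, reducing optimality to a pointwise inequality on the extreme points of $\mathcal{G}$ (the rescaled uniforms). First I would record the basic properties of $g^*$: since the constant $1$ majorises $G$ we have $\hat G \le 1$, while $\hat G \ge G$ with $G(x) \to 1$ forces $\hat G(x) \to 1$; and a short argument with the affine majorants $x\mapsto\min(1,\epsilon+\lambda x)$ gives $\hat G(0^+)=0$. Hence $g^*$ is left-continuous, non-increasing and finite on $(0,\infty)$ (one-sided derivatives of a finite concave function are finite at interior points), with $\int_{(0,\infty)}g^* = \hat G(\infty)-\hat G(0^+) = 1$, so $g^*\in\mathcal{G}=\mathcal{G}_Q$ by Proposition~\ref{thm:Lstar}. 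I would simultaneously establish the last two sentences of the theorem from standard structural facts about the LCM, which I also need below: $\hat G$ is affine on each maximal open interval where $\hat G>G$; every interior knot $\tau$ of $\hat G$ is a contact point $\hat G(\tau)=G(\tau)$ (otherwise one could shave the corner and stay a concave majorant, contradicting minimality; jumps of $G$ cause no problem since $\hat G$ is continuous and $\hat G\ge G$); and if $Q((a,b))=0$ then $G$ is constant on $[a,b)$, forcing $\hat G$ affine, hence $g^*$ constant, on $(a,b]$. In particular $g^*$ is positive on a maximal interval $(0,x_0)$ with $x_0:=\sup\{x:g^*(x)>0\}\in(0,\infty]$ (positive because $\hat G(0^+)=0$ and $\int g^*=1$ preclude $g^*\equiv0$ near $0$), and when $x_0<\infty$ it is a contact point, so $G(x_0)=\hat G(x_0)=1$, whence $x_0=\inf\{x:G(x)=1\}$ and $Q((x_0,\infty))=1-G(x_0)=0$.

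The heart of the argument is to prove that $\int_{(0,\infty)}(g/g^*)\,dQ\le1$ for every $g\in\mathcal{G}$ --- this is meaningful since $g^*>0$ holds $Q$-a.e.\ by the above, with $\int\mathbbm{1}_{\{g^*>0\}}\,dQ=1$. By Khinchine's representation, every $g\in\mathcal{G}$ is a scale mixture of uniform densities, $g(x)=\int_{(0,\infty)}\theta^{-1}\mathbbm{1}_{(0,\theta]}(x)\,d\pi(\theta)$ for a Borel probability measure $\pi$ on $(0,\infty)$, so by Tonelli it is enough to show
\[
\int_{(0,\theta]}\frac{dQ(x)}{g^*(x)} \le \theta\qquad\text{for every }\theta>0,
\]
with equality whenever $\hat G(\theta)=G(\theta)$ (read $1/0:=\infty$; since $Q((x_0,\infty))=0$ only $(0,\theta\wedge x_0]$ contributes). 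I would prove this by splitting $(0,\theta]$ into the contact set $\{\hat G=G\}$, on which one checks that $Q$ has Lebesgue density $g^*$ apart from atoms located at right endpoints of the complementary intervals, and the countably many maximal open intervals on which $\hat G>G$, where $\hat G$ is affine with positive slope $g^*$; since $\hat G$ meets $G$ at the endpoints of those intervals the pieces telescope, and the total works out to $\theta$ minus the non-negative defect $\{\hat G(\theta)-G(\theta)\}/g^*(\theta)$. Integrating against $\pi$ then gives $\int(g/g^*)\,dQ\le\int_{(0,\infty)}\theta^{-1}\cdot\theta\,d\pi(\theta)=1$.

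With the displayed bound in hand, for any $g\in\mathcal{G}$ the pointwise inequality $\log g-\log g^*\le g/g^*-1$ (valid $Q$-a.e.) integrates --- its positive part is dominated by $(\log x)_-$, so the integral is well defined in $[-\infty,\infty)$ --- to $L(g,Q)-L(g^*,Q)\le\int_{(0,\infty)}(g/g^*-1)\,dQ\le0$. Applying this with any $g$ for which $L(g,Q)$ is finite (one exists by Proposition~\ref{thm:Lstar}(c)) also forces $L(g^*,Q)\in\mathbb{R}$, after which $g^*=\argmax_{g\in\mathcal{G}}L(g,Q)$. For uniqueness, if $g^{**}\in\mathcal{G}$ is also a maximiser, strict concavity of $\log$ applied to $\frac12(g^*+g^{**})$ forces $g^{**}=g^*$ $Q$-a.e.; to upgrade this to equality everywhere I would use that $g^{**}$ agrees with $g^*$ off a countable subset of $\supp Q$ (two non-increasing functions agreeing on a $Q$-full, hence dense, subset of $\supp Q$ agree there at their common continuity points), and that on each gap $(a,b)$ of $\supp Q$ --- including the terminal gap $(x_0,\infty)$, where $g^*\equiv0$ --- one has $g^{**}\ge g^*$ (either $b$ carries an atom, forcing $g^{**}(b)=g^*(b)$, or $g^*$ cannot jump at $b$, a jump there being a knot hence a contact point, which would push the affine piece of $\hat G$ on $(a,b]$ below $G$); since $\int g^{**}=\int g^*=1$ and every such discrepancy is non-negative, all vanish, and left-continuity then gives $g^{**}\equiv g^*$.

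I expect the main obstacle to be the displayed inequality $\int_{(0,\theta]}dQ/g^*\le\theta$ with its equality case: the bookkeeping around the contact set and the affine pieces of $\hat G$, and especially the correct treatment of interior knots and of atoms of $Q$ (where $G$ jumps), is the real technical content; a secondary delicate point is the $Q$-almost-everywhere-to-everywhere upgrade in the uniqueness part, which leans on the ``constant on null intervals'' property established in the first step.
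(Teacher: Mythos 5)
The survey states Theorem~\ref{thm:GrenProj} without proof (deferring to the cited source), but your argument is the classical one used there: reduce optimality to the Fenchel-type inequality $\int_{(0,\theta]} dQ/g^* \le \theta$ via Khinchine's scale-mixture representation of $\mathcal{G}$ and the touch-point/affine-piece structure of the least concave majorant, then conclude with $\log t \le t-1$; I find the proposal correct, including the structural claims and the fiddly $Q$-a.e.-to-everywhere upgrade in the uniqueness step. One tiny repair: the well-definedness of $\int(\log g - \log g^*)\,dQ$ is cleanest obtained by dominating its positive part by $(g/g^*-1)_+ \le g/g^*$, which you have already shown to be $Q$-integrable, rather than by $(\log x)_-$ alone, since the latter bounds $(\log g)_+$ but does not obviously control $(\log g^*)_-$.
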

A consequence on Theorem~\ref{thm:GrenProj} is that if $Q$ has Lebesgue density $g_0$ satisfying $\int_0^\infty g_0\,|\log g_0|<\infty$ and $\int_0^\infty g_0(x)\,|\log x|\,dx < \infty$, then $\mathrm{KL}(g_0,g^*)<\infty$ and 
\[
g^*=\argmin_{g\in\mathcal{G}}\mathrm{KL}(g_0,g).
\]
This explains our `projection' terminology: under the above conditions, $g^*$ minimises the Kullback--Leibler divergence from $\mathcal{G}$ to $g_0$.  Of course, if $g_0 \in \mathcal{G}$, then $g^* = g_0$.
We therefore refer to $Q \mapsto g^*(Q)$ as the \emph{Grenander projection}\index{Grenander projection}.

In the special case where $X_1,\ldots,X_n$ are independent, positive random variables with empirical distribution~$\mathbb{Q}_n$, the \emph{Grenander estimator} is $\hat{g}_n := g^*(\mathbb{Q}_n)$; see Figure~\ref{Fig:Grenander}.  The least concave majorant $\mathbb{G}_n^*$ of the empirical distribution function $\mathbb{G}_n$, and hence its left derivative, can be computed using the Pool Adjacent Violators Algorithm (PAVA), which requires only $O(n)$ computational time and storage.
\begin{figure}
  \centering
  \includegraphics[width=0.9\textwidth]{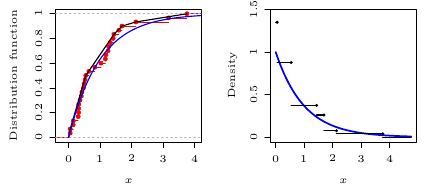}
  \caption{\label{Fig:Grenander}Left: The empirical distribution $\mathbb{G}_n$ (red) of a sample of size $n=30$ from the $\mathrm{Exp}(1)$ distribution, whose distribution function is the blue curve.  The solid black line is the least concave majorant $\mathbb{G}_n^*$ of $\mathbb{G}_n$.  Right: The $\mathrm{Exp}(1)$ density (blue) and the Grenander estimator (black).}
\end{figure}

\subsection{Theoretical properties of the Grenander estimator}

The following analytic result on least concave majorants shows that the Grenander projection $Q\mapsto Q^*$ is 1-Lipschitz with respect to the \emph{Kolmogorov distance}, which for probability measures $Q_1,Q_2$ is defined in terms of their distribution functions $G_1,G_2$ by
\[
  d_{\mathrm{K}}(Q_1,Q_2)=\sup_{x\in (0,\infty)}|G_1(x)-G_2(x)| =: \|G_1-G_2\|_\infty.
\]
\vspace{-0.2cm}
\begin{lemma}[\citealp{marshall1970discussion}]
Let $G_1,G_2$ be two distribution functions on $(0,\infty)$ and let $G_1^*,G_2^*$ be their respective least concave majorants. Then
$\|G_1^*-G_2^*\|_\infty\leq \|G_1-G_2\|_\infty$.
\end{lemma}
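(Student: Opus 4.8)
The plan is to exploit the two defining features of the least concave majorant operation: it is monotone, and it commutes with the addition of constants. Write $\delta := \|G_1 - G_2\|_\infty$; if $\delta = \infty$ there is nothing to prove, so assume $\delta < \infty$. Since each $G_i$ is a distribution function, it is bounded above by $1$, and the constant function $1$ is itself a concave majorant of $G_i$; hence $G_i^*$ is well-defined, concave on $(0,\infty)$, and finite (in fact bounded above by $1$).

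First I would observe that $G_1 \leq G_2 + \delta$ pointwise on $(0,\infty)$, and that $G_2 \leq G_2^*$ by definition of a majorant, whence $G_1 \leq G_2^* + \delta$. The function $G_2^* + \delta$ is concave, being the sum of a concave function and a constant, so it is a concave majorant of $G_1$. Since $G_1^*$ is the \emph{least} concave majorant of $G_1$ --- that is, $G_1^* \leq h$ for every concave function $h$ with $h \geq G_1$ --- we conclude $G_1^* \leq G_2^* + \delta$ on $(0,\infty)$. Interchanging the roles of $G_1$ and $G_2$ gives the symmetric inequality $G_2^* \leq G_1^* + \delta$.

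Combining the two bounds yields $|G_1^*(x) - G_2^*(x)| \leq \delta$ for every $x \in (0,\infty)$, and taking the supremum over $x$ gives $\|G_1^* - G_2^*\|_\infty \leq \delta = \|G_1 - G_2\|_\infty$, as required.

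There is no substantial obstacle here: the only points requiring a moment's care are the verification that the least concave majorants exist and are finite (which follows from boundedness of distribution functions) and the explicit invocation of the characterising property that the least concave majorant lies below every concave majorant. Notably, the argument uses neither the monotonicity nor the left-continuity of the $G_i$, nor any structure of $(0,\infty)$ beyond its being an interval, so it extends verbatim to a considerably more general setting.
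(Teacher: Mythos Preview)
Your proof is correct and follows essentially the same argument as the paper: both use $G_1 \leq G_2 + \delta \leq G_2^* + \delta$, observe that $G_2^* + \delta$ is concave, invoke the minimality of $G_1^*$, and finish by symmetry. Your version is slightly more expansive in checking existence and finiteness of the majorants, but the route is identical.
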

\begin{proof}
Let $d:=\|G_1-G_2\|_\infty$. Then $G_1\leq G_2+d\leq G_2^*+d$ on $(0,\infty)$, and $G_2^*+d$
is concave, so $G_1^*\leq G_2^*+d$ on $(0,\infty)$ by the definition of $G_1^*$ as the least concave majorant of $G_1$. By symmetry, $G_2^*\leq G_1^*+d$, so $\|G_1^*-G_2^*\|_\infty\leq d$, as required.
\end{proof}
In combination with the Glivenko--Cantelli theorem, Marshall's lemma immediately yields the first part of Corollary~\ref{cor:Robust} below; the second part is a consequence of basic properties of the left derivatives of concave functions \citep[][Lemma~9.6]{samworth2025modern}.  
\begin{corollary}
\label{cor:Robust}
Let $X_1,X_2,\ldots \stackrel{\mathrm{iid}}{\sim} Q$ on $(0,\infty)$ with distribution function $G$, and write $G^*$ for its least concave majorant, with corresponding distribution $Q^*$. For $n \in \mathbb{N}$, let $\mathbb{Q}_n$ denote the empirical distribution of $X_1,\ldots,X_n$ with corresponding empirical distribution function $\mathbb{G}_n$, and write $\mathbb{G}_n^*$ for its least concave majorant, with corresponding distribution $\mathbb{Q}_n^*$. Let $\hat{g}_n:=g^*(\mathbb{Q}_n)$ and $g^* := g^*(Q)$.  
\begin{enumerate}[(a)]
\item We have
\[
  d_{\mathrm{K}}(\mathbb{Q}_n^*,Q^*)=\|\mathbb{G}_n^*-G^*\|_\infty \leq \|\mathbb{G}_n - G\|_\infty \stackrel{\mathrm{a.s.}}{\rightarrow} 0
\]
as $n \rightarrow \infty$.
\item Moreover, $\hat{g}_n(x_0) \stackrel{\mathrm{a.s.}}{\rightarrow} g^*(x_0)$ for every continuity point $x_0 \in (0,\infty)$ of $g^*$, and $\mathrm{TV}(\hat{g}_n,g^*) \stackrel{\mathrm{a.s.}}{\rightarrow} 0$ as $n \rightarrow \infty$.  Finally, if $g^*$ is continuous on $(0,\infty)$ then for each $x_0 \in (0,\infty)$,
    \[
      \sup_{x \geq x_0} |\hat{g}_n(x) - g^*(x)| \stackrel{\mathrm{a.s.}}{\rightarrow} 0.
    \]
\end{enumerate}
\end{corollary}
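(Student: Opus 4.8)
The plan is to derive part~(a) directly from the two quoted ingredients, once one checks the objects are well defined, and to reduce part~(b) to a single analytic fact about left derivatives of concave functions, supplemented by Scheff\'e's lemma and a tail estimate. For part~(a), I would first record that, since $G$ and $\mathbb{G}_n$ are distribution functions on $(0,\infty)$, so are their least concave majorants $G^*$ and $\mathbb{G}_n^*$: the constant function $1$ is a concave majorant, so each majorant is bounded above by $1$; being trapped between the distribution function it majorises and $1$, it tends to $1$ at $\infty$, which forces it to be nondecreasing; and its limit at $0+$ is $0$, because for every $b>0$ the map $x\mapsto\{G(x)-b\}/x$ is bounded above on $(0,\infty)$ (it tends to $-\infty$ at $0+$ and to $0$ at $\infty$), so $G$ admits an affine majorant with intercept $b$. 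Hence $\mathbb{Q}_n^*$ and $Q^*$ are well defined and $d_{\mathrm K}(\mathbb{Q}_n^*,Q^*)=\|\mathbb{G}_n^*-G^*\|_\infty$; Marshall's lemma with $(G_1,G_2)=(\mathbb{G}_n,G)$ then gives $\|\mathbb{G}_n^*-G^*\|_\infty\le\|\mathbb{G}_n-G\|_\infty$, and the right-hand side tends to $0$ almost surely by the Glivenko--Cantelli theorem.

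For part~(b) I would work on the almost sure event $\Omega_0$ on which $\|\mathbb{G}_n^*-G^*\|_\infty\to0$, and invoke the elementary fact (Lemma~9.6 of \citet{samworth2025modern}) that if concave functions $h_n\to h$ pointwise on an open interval, then at every interior point $x_0$ one has $\limsup_n(h_n)'_-(x_0)\le h'_-(x_0)$ and $\liminf_n(h_n)'_+(x_0)\ge h'_+(x_0)$; these follow by bounding $(h_n)'_-(x_0)$ above by a left secant slope of $h_n$ at $x_0$ and $(h_n)'_+(x_0)$ below by a right secant slope, letting $n\to\infty$, and then shrinking the secants to $x_0$. Applying this with $h_n=\mathbb{G}_n^*$, $h=G^*$: at a continuity point $x_0$ of $g^*=(G^*)'_-$ the function $G^*$ is differentiable, so $(G^*)'_-(x_0)=(G^*)'_+(x_0)$, and combining the two inequalities with $(\mathbb{G}_n^*)'_+\le(\mathbb{G}_n^*)'_-$ squeezes $\hat g_n(x_0)=(\mathbb{G}_n^*)'_-(x_0)\to(G^*)'_-(x_0)=g^*(x_0)$. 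Since $g^*$ is monotone it is continuous off a countable set, so on $\Omega_0$ we have $\hat g_n\to g^*$ Lebesgue-almost everywhere; as $\hat g_n$ and $g^*$ are probability densities (the former is the Grenander estimator; for the latter, $\int_0^\infty g^*=G^*(\infty)-G^*(0+)=1$), Scheff\'e's lemma yields $\int_0^\infty|\hat g_n-g^*|\to0$, which is the assertion $\mathrm{TV}(\hat g_n,g^*)\to0$.

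For the final display, assuming $g^*$ continuous on $(0,\infty)$, every point of $(0,\infty)$ is a continuity point of $g^*$, so on $\Omega_0$ the convergence $\hat g_n\to g^*$ holds pointwise on all of $(0,\infty)$, with $\hat g_n$ and $g^*$ decreasing. On a compact interval $[x_0,M]$, pointwise convergence of monotone functions to a continuous limit is uniform (a P\'olya-type argument, via a partition on which $g^*$ varies by at most $\varepsilon$), so $\sup_{x\in[x_0,M]}|\hat g_n(x)-g^*(x)|\to0$. For the unbounded remainder I would fix $\varepsilon>0$, use that the decreasing density $g^*$ tends to $0$ at $\infty$ to choose $M$ with $g^*(M)<\varepsilon$, and then for $x\ge M$ use monotonicity: $|\hat g_n(x)-g^*(x)|\le\max\{\hat g_n(x),g^*(x)\}\le\max\{\hat g_n(M),g^*(M)\}$, which is $<\varepsilon$ for all large $n$ since $\hat g_n(M)\to g^*(M)$. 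Combining the two ranges of $x$ gives $\sup_{x\ge x_0}|\hat g_n(x)-g^*(x)|\to0$.

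The only genuinely delicate step is the derivative-convergence input: uniform convergence of the concave functions $\mathbb{G}_n^*$ does not pin down their slopes at a corner of the limit $G^*$, which is exactly why part~(b) is phrased in terms of continuity points of $g^*$, and why upgrading to uniform convergence on the unbounded set $[x_0,\infty)$ needs both the no-corner hypothesis and the separate tail estimate above rather than following formally from part~(a).
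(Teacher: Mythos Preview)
Your proposal is correct and follows essentially the same route as the paper: Marshall's lemma plus Glivenko--Cantelli for part~(a), and the derivative-convergence property of concave functions (the paper's cited Lemma~9.6 of \citet{samworth2025modern}) for part~(b). The paper leaves the passage from pointwise convergence at continuity points to the total variation and uniform statements implicit in that citation, whereas you spell out the Scheff\'e step and the P\'olya-plus-tail argument explicitly; these are the natural details behind the paper's one-line reference, not a different strategy.
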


From Corollary~\ref{cor:Robust}, we see that the Grenander estimator is consistent under correct model specification (when $Q=Q^*$ has a density $g_0=g^*\in\mathcal{G}$) and robust under misspecification in the sense that it converges in the modes described to the Grenander projection of $Q$.  

To conclude this section, we present minimax risk bounds.  For $H,L > 0$, let $\mathcal{G}(H,L)$ denote the set of left-continuous, decreasing densities on $(0,L]$ that are bounded by $H$.  Note that the $\mathrm{Exp}(1)$ density for instance does not belong to $\mathcal{G}(H,L)$ for any $H,L$.  Let~$\tilde{\mathcal{G}}_n$ denote the set of estimators of $g_0$ based on $X_1,\ldots,X_n$, i.e.~the set of Borel measurable functions from $(0,\infty)^n$ to the set of integrable functions on $(0,\infty)$.  
\begin{theorem}[\citealp{birge1987estimating,birge1989grenander}]
\label{Thm:GrenanderRisk}
Let $X_1,\ldots,X_n \stackrel{\mathrm{iid}}{\sim} g_0 \in \mathcal{G}$.  Then, writing $S := \log(1 +HL)$, we have
  \[
0.0975S^{1/3} \leq \inf_{\tilde{g}_n \in \tilde{\mathcal{G}}_n} \sup_{g_0 \in \mathcal{G}(H,L)} n^{1/3} E_{g_0}\bigl\{\mathrm{TV}(\tilde{g}_n,g_0)\bigr\} \leq 0.975S^{1/3}
\]
when $S \geq 1.31$ and $n \geq 39S$.
\end{theorem}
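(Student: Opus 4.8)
The plan is to prove the two inequalities separately, in each case first pinning down the rate $(S/n)^{1/3}$ by a now-standard argument and then refining the analysis to extract the explicit constants, the latter being the genuinely delicate part of Birg\'e's work.

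For the upper bound I would bound the risk of the Grenander estimator $\hat g_n$ (a histogram sieve estimator serves equally well for the rate). The key deterministic ingredient is an approximation lemma: every $g_0 \in \mathcal{G}(H,L)$ can be approximated in total variation, to within any $\delta \in (0,1)$, by a decreasing step function taking at most $O(S/\delta)$ distinct values. One proves this by slicing the range $[0,H]$ of $g_0$ geometrically with a ratio close to $1$, using the normalisation $\int_0^L g_0 = 1$ to control the total Lebesgue measure of each super-level set, and absorbing the region where $g_0 \le \delta/(2L)$ into a single cell; the unit-mass constraint is exactly what converts the naive bound $O(HL/\delta)$ into $O(S/\delta) = O(\log(1+HL)/\delta)$. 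This simultaneously shows that the $\delta$-bracketing entropy of $\mathcal{G}(H,L)$ with respect to $\|\cdot\|_1$ is $O(S/\delta)$, which controls the stochastic error: on a sieve of this complexity the risk is at most of order $\delta + \sqrt{S/(\delta n)}$, and choosing $\delta \asymp (S/n)^{1/3}$ balances the two terms to give $n^{1/3} E_{g_0}\{\mathrm{TV}(\hat g_n, g_0)\} \lesssim S^{1/3}$. Sharpening this to the constant $0.975$ is where the real work lies: one needs a near-optimal version of the approximation lemma (so the constant hidden in $O(S/\delta)$ is close to $1$), a sharp control of the multinomial-type fluctuations, and the side conditions $S \ge 1.31$ and $n \ge 39S$ to guarantee that the optimal partition has at least one cell of each kind and that the lower-order remainders are genuinely dominated.

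For the lower bound I would apply Assouad's lemma to a multi-scale hypercube of decreasing densities. Take as base the density $g_0$ proportional to $1/x$ on $[c/H, L]$ for a suitable constant $c$, so that $g_0$ is decreasing, bounded by $H$, supported in $(0,L]$ and of unit mass, with normalising constant of order $S$; split its support into $K \asymp S$ dyadic blocks $[2^k c/H, 2^{k+1} c/H]$ and partition each block into $\ell$ equal sub-intervals. On each sub-interval, attached to a bit $\omega_{k,j} \in \{0,1\}$, add a small decreasing \emph{zig-zag} perturbation whose height is as large as preserving monotonicity of $g_\omega$ allows, namely of the order of the decrement of $g_0$ across one sub-interval. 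A short computation then shows that each block contributes a \emph{scale-free} amount to $\mathrm{TV}(g_\omega, g_{\omega'})$, so that two antipodal vertices are separated by $\asymp 1/\ell$ in total variation while the Kullback--Leibler divergence between the corresponding $n$-fold products, per flipped coordinate, is $\asymp n/(S\ell^3)$; taking $\ell \asymp (n/S)^{1/3}$ makes the latter $O(1)$ and the separation $\asymp (S/n)^{1/3}$. Assouad's lemma then yields $\inf_{\tilde g_n}\sup_{g_0 \in \mathcal{G}(H,L)} E_{g_0}\{\mathrm{TV}(\tilde g_n, g_0)\} \gtrsim (S/n)^{1/3}$, and, as on the upper side, extracting the constant $0.0975$ comes down to optimising the free parameters $c$, $K$, $\ell$ and the perturbation height; the hypotheses $S \ge 1.31$ (enough dyadic blocks, base density well-defined and bounded by $H$) and $n \ge 39S$ (so that $\ell$ exceeds its required threshold with a usable integer value and the product measures stay statistically close) delimit exactly the regime in which the tuned construction is legitimate.

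I expect the main obstacle to be neither inequality at the level of rates — both follow from standard entropy and Assouad arguments — but the bookkeeping needed to reach the explicit, near-matching constants $0.0975$ and $0.975$. This rules out lossy black-box steps: on the upper side one must analyse the estimator essentially by hand, with an essentially optimal approximation lemma, rather than invoking a generic minimax-rate theorem; on the lower side one must tune the multi-scale hypercube so that no constant is wasted across the $\asymp S$ scales, with the side conditions $S \ge 1.31$ and $n \ge 39S$ marking precisely the boundary of validity of the construction and its error estimates.
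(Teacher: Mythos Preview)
The paper does not prove this theorem: it is a survey, and Theorem~\ref{Thm:GrenanderRisk} is simply stated with attribution to \citet{birge1987estimating,birge1989grenander}, followed by two sentences of commentary. So there is no ``paper's own proof'' to compare against.

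That said, your outline is a faithful high-level sketch of Birg\'e's original arguments. The upper bound does rest on the approximation lemma you describe (geometric slicing of the range to get $O(S/\delta)$ pieces, with the logarithm arising from the unit-mass constraint), and the lower bound is indeed an Assouad-type hypercube built on a $1/x$-like base density partitioned into $\asymp S$ dyadic blocks. One small correction from the paper's commentary: it remarks explicitly that the condition $S \geq 1.31$ is used \emph{only} for the lower bound, whereas you invoke it on both sides; the upper bound (achieved by the Grenander estimator itself, not merely a sieve) holds without that restriction. Beyond this, your proposal is a plan rather than a proof --- the constant-tracking you flag as ``the real work'' is genuinely the content of Birg\'e's papers and cannot be reconstructed from the sketch alone --- but as an outline of the route it is accurate.
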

The quantity $S$ that appears in Theorem~\ref{Thm:GrenanderRisk} is affine invariant, as is the total variation loss function.  We remark that the condition $S \geq 1.31$ is only used in the lower bound, and the Grenander estimator achieves the upper bound.  Thus, under these side conditions, the worst-case total variation risk over $\mathcal{G}(H,L)$ of the Grenander estimator comes within a factor of 10 of the best achievable risk.  

\section{Log-concave density estimation}
\label{Sec:LogConcaveDensityEstimation}

Although the class of decreasing densities on $(0,\infty)$ provides a natural starting point for studying shape-constrained estimation problems, with an explicit expression for the maximum likelihood estimator, the family is nevertheless limited, and the ideas do not generalise particularly straightforwardly to multivariate settings.  The class of log-concave densities, on the other hand, contains many commonly-encountered parametric families, and has several closure and stability properties that make it a very natural infinite-dimensional generalisation of the class of Gaussian densities.  We will study the basic properties of this class in the next subsection, and will subsequently discuss questions of statistical estimation.  

\subsection{Definition and basic properties}

We say $f:\mathbb{R}^d \rightarrow [0,\infty)$ is \emph{log-concave}\index{log-concave function} if $\log f$ is concave, with the convention that $\log 0 := -\infty$.  Examples of univariate log-concave densities include Gaussian densities, Gumbel densities, logistic densities, $\Gamma(\alpha,\lambda)$ densities with $\alpha \geq 1$, $\mathrm{Beta}(a,b)$ densities with $a,b \geq 1$ and Laplace densities.  Multivariate Gaussian densities are also log-concave, as are uniform densities on convex, compact sets, densities with independent log-concave components and spherically symmetric densities of the form $x \mapsto g(\|x\|)$, where $g:[0,\infty) \rightarrow [0,\infty)$ is decreasing and log-concave.  Log-concave densities $f$ are unimodal, i.e.~the super-level set $\{x \in \mathbb{R}^d:f(x) \geq t\}$ is convex for every $t \in \mathbb{R}$, and have exponentially-decaying tails.  Thus, Cauchy densities are not log-concave, and it can be shown that the density of the Gaussian mixture $p N_d(\mu_1,I) + (1-p) N_d(\mu_2,I)$ is log-concave when $p \in (0,1)$ if and only if $\|\mu_1-\mu_2\| \leq 2$.  In contrast to the class $\mathcal{G}$ of Section~\ref{Sec:Grenander}, there is no requirement for any aspect of the support of the densities in the class to be known.  A helpful univariate characterisation is the following:
\begin{lemma}[\citealp{ibragimov1956composition}]
A density $f$ on $\mathbb{R}$ is log-concave if and only if the convolution $f \ast g$ is unimodal for every unimodal density $g$.
\end{lemma}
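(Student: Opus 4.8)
The plan is to route both implications through the classical equivalence between log-concavity and the P\'olya frequency property of order~$2$: a density $f$ on $\mathbb{R}$ is log-concave if and only if the kernel $(x,t)\mapsto f(x-t)$ is totally positive of order~$2$, i.e.\ $f(x_1-t_1)f(x_2-t_2)\ge f(x_1-t_2)f(x_2-t_1)$ whenever $x_1<x_2$ and $t_1<t_2$ (Schoenberg). The bridge to unimodality is the elementary observation that a density $h$ is unimodal if and only if, for every $a>0$, the shifted difference $x\mapsto h(x)-h(x-a)$ either vanishes identically or changes sign exactly once, from $+$ to $-$; I would record this as a preliminary lemma, since it converts statements about unimodality into statements about numbers of sign changes, which is exactly what the total-positivity machinery controls. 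Throughout, write $S^-(\cdot)$ for the number of sign changes of a function.

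For the \emph{only if} direction, suppose $f$ is log-concave, hence a P\'olya frequency function of order~$2$, hence variation-diminishing of order~$2$: for any integrable $\phi$ with $S^-(\phi)\le 1$ one has $S^-(f*\phi)\le 1$, and when $S^-(f*\phi)=1$ the sign pattern of $f*\phi$ is inherited from that of $\phi$ (Karlin). Given a unimodal density $g$ and $a>0$, the preliminary lemma shows that $\phi:=g(\cdot)-g(\cdot-a)$ is of type $(+,-)$; since $(f*g)(\cdot)-(f*g)(\cdot-a)=f*\phi$, the variation-diminishing property forces this shifted difference also to be of type $(+,-)$, and the preliminary lemma applied to $h=f*g$ yields that $f*g$ is unimodal.

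The \emph{if} direction I would prove by contraposition, and this is where the work lies. If $f$ is not log-concave, then the total-positivity inequality fails: either $\{f>0\}$ is not an interval, or there exist $x_1<x_2$ and $t_1<t_2$ with $f(x_1-t_1)f(x_2-t_2)<f(x_1-t_2)f(x_2-t_1)$. In the first case a gap in the support of $f$ produces a genuine valley in $f*g$ for a sufficiently concentrated unimodal $g$. In the second case I would exploit the strict reversal: first take $\phi$ a signed combination of point masses near $t_1$ and $t_2$, for which the reversal immediately gives $S^-(f*\phi)\ge 2$; then build an honest unimodal density $g$ — for instance a finite mixture of uniform densities sharing a common mode, or a plateau-plus-spike density, with supports and weights tuned to $(x_1,x_2,t_1,t_2)$ — such that for a suitable $a>0$ the function $(f*g)(\cdot)-(f*g)(\cdot-a)$ records two sign changes, so that $f*g$ fails the criterion of the preliminary lemma and is not unimodal. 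The passage from the point-mass witness to a density is justified by the stability of a strict reversal under small perturbations, together with the fact that point masses are weak limits of narrow unimodal bumps.

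The main obstacle is precisely this regularisation step in the converse: the obvious witness to the failure of total positivity is a signed measure, and turning it into an honest unimodal \emph{density} is genuinely constrained — in particular every shifted difference $g(\cdot)-g(\cdot-a)$ of a unimodal $g$ carries equal positive and negative mass, so the unequal weights appearing in the reversal cannot be reproduced naively; one must instead distribute the construction over several uniform pieces while keeping the two output sign changes well separated. The degenerate possibilities ($f$ not even unimodal, non-interval support, behaviour at the ends of the support) must also be handled separately, but these are routine compared with the core construction.
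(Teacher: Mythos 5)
The paper does not prove this lemma at all --- it is quoted with a citation to \citet{ibragimov1956composition} --- so your proposal has to stand on its own against the classical argument. Your forward direction does: log-concavity of $f$ is equivalent to the $\mathrm{TP}_2$ property of $(x,t)\mapsto f(x-t)$, your preliminary characterisation of unimodality via shifted differences $h-h(\cdot-a)$ is correct (and its proof routine), and the Schoenberg--Karlin variation-diminishing property of $\mathrm{PF}_2$ kernels then transfers the $(+,-)$ sign pattern of $g-g(\cdot-a)$ to $(f\ast g)-(f\ast g)(\cdot-a)$, giving unimodality of $f\ast g$. That half is complete.

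The gap is in the converse, and you have identified it yourself without closing it. First, a local slip: with $\phi=\alpha\delta_{t_1}-\beta\delta_{t_2}$ and $\beta/\alpha$ chosen strictly between $f(x_1-t_1)/f(x_1-t_2)$ and $f(x_2-t_1)/f(x_2-t_2)$, the reversal gives $(f\ast\phi)(x_1)<0<(f\ast\phi)(x_2)$, i.e.\ a \emph{single} sign change in the reversed order $(-,+)$, not ``$S^-(f\ast\phi)\ge 2$''; what you need (and what does suffice, by your preliminary lemma) is a $-$ to $+$ transition in $(f\ast g)-(f\ast g)(\cdot-a)$, so the argument should be phrased around the direction of the sign change rather than its count. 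Second, and decisively, the passage from this point-mass witness to an honest unimodal density is precisely the hard content of Ibragimov's theorem and is left as a programme: since $g-g(\cdot-a)$ always has equal positive and negative mass, the unequal weights $\alpha\neq\beta$ demanded by the reversal cannot be produced by a single pair of bumps, and your proposed remedy (several uniform pieces sharing a common mode, with compensating mass parked where $f(x_1-\cdot)$ and $f(x_2-\cdot)$ are negligible, plus separate treatment of zeros of $f$ and non-interval support) is plausible but nowhere executed --- no choice of intervals, weights or shift $a$ is exhibited, and no verification that the two evaluation points retain the required signs after the balancing mass is added. Without that construction the contrapositive is not proved. For comparison, the classical proof of this direction avoids the mass-balancing issue altogether by testing only against uniform densities $u_h$, so that unimodality of $f\ast u_h$ becomes unimodality of $x\mapsto F(x)-F(x-h)$ for every $h>0$, from which log-concavity of $f$ is extracted by a limiting argument; as written, your converse is an outline of a harder route rather than a proof.
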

It will be convenient to let $\mathcal{F}_d$ denote the class of upper semi-continuous, log-concave densities on $\mathbb{R}^d$.  The densities here are with respect to $d$-dimensional Lebesgue measure; the upper semi-continuity restriction fixes a particular version of the density (the set of discontinuities of a log-concave density lie on the boundary of a convex set, so have zero Lebesgue measure).  The Gaussian mixture example in the previous paragraph shows that $\mathcal{F}_d$ is not a convex set, unlike the class $\mathcal{G}$ of decreasing densities on $(0,\infty)$ studied in Section~\ref{Sec:Grenander}; fortunately, and perhaps surprisingly, this turns out to cause fewer difficulties for estimation than one might imagine.  Now let $\Phi$ denote the convex set of upper semi-continuous, concave functions $\phi:\mathbb{R}^d \rightarrow [-\infty,\infty)$ that are coercive in the sense that $\phi(x) \rightarrow -\infty$ as $\|x\| \rightarrow \infty$.  Since $\log f$ is coercive whenever $f \in \mathcal{F}_d$, we therefore have
\[
      \mathcal{F}_d = \biggl\{e^\phi:\phi \in \Phi,\int_{\mathbb{R}^d} e^\phi = 1\biggr\}.
    \]

Densities $f \in \mathcal{F}_d$ with a fixed scale necessarily satisfy certain pointwise bounds.  For such $f$, we let $\mu_f := \int_{\mathbb{R}^d} x f(x) \, dx$ and $\Sigma_f := \int_{\mathbb{R}^d} (x-\mu_f)(x-\mu_f)^\top f(x) \, dx$.  For $\mu \in \mathbb{R}^d$ and positive definite $\Sigma \in \mathbb{R}^{d \times d}$, we also write $\mathcal{F}_d^{\mu,\Sigma} := \{f \in \mathcal{F}_d: \mu_f=\mu,\Sigma_f=\Sigma\}$. 
\begin{lemma}
\label{Lemma:Envelope}
\begin{enumerate}[(a)]
  \item \textbf{Univariate case}: For every $f \in \mathcal{F}_1^{0,1}$, we have
\[
  f(x) \leq \left\{ \begin{array}{ll} \frac{1}{(2-x^2)^{1/2}} & \mbox{if $x \in [-1,1]$} \\
                                      e^{-|x|+1}   & \mbox{otherwise.} \end{array} \right.
                                \]
  \item \textbf{Multivariate case}: There exist $A_d > 0$, $B_d \in \mathbb{R}$, both depending only on $d$, such that for every $f \in \mathcal{F}_d^{0,I}$, we have
    \[
f(x) \leq e^{-A_d\|x\| + B_d}.
\]
Moreover, for every $x \in \mathbb{R}^d$ with $\|x\| \leq 1/9$, we have $f(x) \geq 2^{-8d}$.
                                  \end{enumerate}
\end{lemma}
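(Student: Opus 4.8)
The plan is to obtain every inequality from two soft, dual consequences of log-concavity --- that a log-concave density decays at least geometrically away from any of its superlevel sets, and that, conversely, its value at an interior point is bounded below in terms of its values on a surrounding convex body --- and then to feed in the moment constraints $\mu_f=0$ and $\Sigma_f=I$ to turn these qualitative statements into the stated envelopes. Since $\log f$ is concave, the restriction of $f$ to any line is a univariate log-concave function, which is the device that reduces much of the multivariate work to the one-dimensional picture.

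\emph{Part (a).} Fix $x_0\in\mathbb{R}$ and put $t:=f(x_0)$, which we may take to be positive. Reflecting $f$ in the origin if necessary --- an operation preserving $\mathcal{F}_1^{0,1}$ and leaving both claimed bounds unchanged, as they depend on $x_0$ only through $|x_0|$ --- we may assume $x_0$ lies weakly to the right of a mode $m$ of $f$, so that some supporting line to $\log f$ at $x_0$ has slope $s\le 0$. Concavity of $\log f$ then gives $f(x)\le t\,e^{s(x-x_0)}$ for $x\ge x_0$; unimodality gives $f\ge t$ on $[m,x_0]$; and a supporting line to $\log f$ at $m$, together with $M:=\|f\|_\infty=f(m)$, controls $f$ on $(-\infty,m]$ by a further geometric envelope. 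Inserting these estimates into $1=\int_{\mathbb{R}}f$, $0=\int_{\mathbb{R}}xf(x)\,dx$ and $1=\int_{\mathbb{R}}x^2f(x)\,dx$ produces a finite system of inequalities in $t$, $M$, $m$ and the two slopes; eliminating the nuisance parameters reduces matters to an elementary one-variable optimization whose extremal is a two-sided exponential density, with truncation becoming relevant when $|x_0|>1$. This gives $t^2(2-x_0^2)\le 1$ for $|x_0|\le 1$ and $t\le e^{1-|x_0|}$ for $|x_0|>1$, the two regimes matching continuously at $|x_0|=1$. The optimization, though routine, is the main computational step here.

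\emph{Part (b), lower bound near the origin.} Fradelizi's centroid inequality gives $\|f\|_\infty\le e^d f(\mu_f)$, hence $f(0)\ge e^{-d}\|f\|_\infty$; and since $\int_{\mathbb{R}^d}\|x\|^2f(x)\,dx=d$, Markov's inequality puts at least half the mass of $f$ in the ball of radius $\sqrt{2d}$, so a direct computation of that ball's volume gives $\|f\|_\infty\ge c_0^d$ for a universal $c_0>0$; thus $f(0)\ge(c_0/e)^d$. To propagate this to a neighbourhood of the origin, restrict $f$ to rays through $0$: along each ray $\log f$ is concave with a controlled value at $0$, and a density cannot fall away too abruptly from its value there (a superlevel set at height of order $f(0)$ must have volume bounded below, lest $\int f<1$), which forces $f(y)\ge c_1^d f(0)$ whenever $\|y\|\le 1/9$, for a further universal $c_1>0$. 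Choosing constants so that $(c_0c_1/e)^d\ge 2^{-8d}$ gives the stated bound.

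\emph{Part (b), exponential tail.} Write $r:=\|x\|$ and $t:=f(x)$. By the previous step $f\ge 2^{-8d}$ on $B:=\{y\in\mathbb{R}^d:\|y\|\le 1/9\}$, so log-concave interpolation gives $f\ge\min\{2^{-8d},t\}$ on the spike $K:=\mathrm{conv}(B\cup\{x\})$, whose volume is at least $c_d\,r$ for a dimensional constant $c_d>0$ once $r\ge 1/9$. Integrating against $1=\int_{\mathbb{R}^d}f$ shows first that $\{f\ge 2^{-8d}\}$ is contained in the ball of radius $R_d:=2^{8d}/c_d$, and then that $f(x)\le 1/(c_dr)$ as soon as $r>R_d$. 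To upgrade this polynomial rate, use concavity of $\log f$ along the ray through $x$: it is at least $-8d\log 2$ at radius $1/9$ but at most $\log\bigl(1/(c_d\rho)\bigr)$ at radius $\rho>R_d$, so its chord slope over a suitable dimensional interval is bounded above by a strictly negative quantity of order $-1/R_d$, and concavity then forces $f$ to decay, along every ray, at some rate $A_d>0$ depending only on $d$. Since $\|f\|_\infty$ is itself at most a constant depending only on $d$ (a standard fact, weaker than the slicing theorem), the behaviour on the bounded region $\{\|x\|\le eR_d\}$ is absorbed into a constant $B_d$, completing the proof. The main obstacle will be precisely this last upgrade: one must genuinely exploit the geometric, not merely monotone, decay of $\log f$ while keeping $A_d$ and $B_d$ dependent on $d$ alone, whereas in part (a) the analogous step is immediate --- a single supporting line already gives exponential decay --- and the real effort there goes into the explicit optimization behind the sharp constants $(2-x_0^2)^{-1/2}$ and $e^{1-|x_0|}$.
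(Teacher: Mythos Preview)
The paper does not supply its own proof of this lemma; immediately after stating it, the paper attributes part~(a), the upper bound in~(b), and the lower bound in~(b) to three separate references. So there is no in-paper argument to compare against and your proposal must stand on its own. Your strategy for part~(a) and for the exponential tail in~(b) is sound at the level of a plan, though in~(a) the ``elementary one-variable optimisation'' is where all the work lies and is not carried out.

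There is, however, a genuine gap in your lower-bound argument for part~(b). Having obtained $f(0)\ge c^d$ via Fradelizi, you propagate to the ball $\{\|y\|\le 1/9\}$ by asserting that ``a superlevel set at height of order $f(0)$ must have volume bounded below, lest $\int f<1$''. But $\int f=1$ gives only the \emph{upper} bound $\mathrm{vol}\{f\ge h\}\le 1/h$; no lower bound follows from $\int f=1$ and log-concavity alone. For instance, with $f_\epsilon(x)\propto e^{-\|x\|/\epsilon}$ the set $\{f_\epsilon\ge e^{-1}\|f_\epsilon\|_\infty\}$ has volume of order $\epsilon^d\to 0$ while $\int f_\epsilon=1$ throughout. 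What excludes such concentration here is precisely the constraint $\Sigma_f=I$, which you do not invoke at this step. Even granting a lower bound on the volume of some superlevel set, you would still need that set to contain a ball of radius $1/9$ about the origin rather than a long thin sliver, and again it is isotropy that rules this out. Some quantitative use of $\Sigma_f=I$ --- for example, arguing that too-rapid decay of $f$ in a unit direction $u$ forces $\mathrm{Var}(u^\top X)<1$ --- is unavoidable. Since your exponential-tail argument in~(b) is built on this lower bound, the gap propagates there as well.
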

Lemma~\ref{Lemma:Envelope}\emph{(a)} is due to \citet{feng2021adaptation}; the upper bound in~\emph{(b)} was proved by \citet{fresen2013multivariate} and the lower bound is due to \citet{lovasz2007geometry}.  The lemma provides an `envelope' function for the isotropic elements of the class $\mathcal{F}_d$ (i.e.~those with mean zero and identity covariance matrix).  When $d=1$ and $x \in [-1,1]$ the bound on the envelope function is sharp, and when $x \notin [-1,1]$, it is almost sharp, in the sense that $\sup_{f \in \mathcal{F}_1^{0,1}} f(x) \geq e^{-(|x|+1)}$ (since the densities of the $\mathrm{Exp}(1) - 1$ and $1 - \mathrm{Exp}(1)$ distributions both belong to~$\mathcal{F}_1^{0,1}$) and moreover $e^{|x|-1}\sup_{f \in \mathcal{F}_1^{0,1}} f(x) \rightarrow 1$ as $|x| \rightarrow \infty$.  The fact that the envelope functions in Lemma~\ref{Lemma:Envelope} are integrable turns out to be very convenient in studying the rates of statistical estimation over $\mathcal{F}_d$ (see Section~\ref{Sec:Holder} below); in particular, we will not need to make further restrictions on the class to state minimax risk bounds.  Again, this is in contrast to the class $\mathcal{G}$ studied in Section~\ref{Sec:Grenander}, which has the non-integrable envelope function $x \mapsto 1/x$ on $(0,\infty)$, and for which we introduced the subclass $\mathcal{G}(H,L)$ in Theorem~\ref{Thm:GrenanderRisk}.

Having understood something about the shape of log-concave densities, we now turn to their stability properties:
\begin{theorem}[\citealp{prekopa1973logarithmic,prekopa1980logarithmic}]
\label{Thm:Prekopa}
  Let $d = d_1 + d_2$ and let $f:\mathbb{R}^d \rightarrow [0,\infty)$ be log-concave.  Then
  \[
    x \mapsto \int_{\mathbb{R}^{d_2}} f(x,y) \, dy
  \]
  is log-concave on $\mathbb{R}^{d_1}$.
\end{theorem}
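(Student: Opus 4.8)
The plan is to deduce the result from the \emph{Pr\'ekopa--Leindler inequality}: if $\lambda \in (0,1)$ and $g,h,m : \mathbb{R}^n \to [0,\infty)$ are measurable with $m\bigl(\lambda u + (1-\lambda)v\bigr) \geq g(u)^\lambda h(v)^{1-\lambda}$ for all $u,v \in \mathbb{R}^n$, then $\int_{\mathbb{R}^n} m \geq \bigl(\int_{\mathbb{R}^n} g\bigr)^{\lambda}\bigl(\int_{\mathbb{R}^n} h\bigr)^{1-\lambda}$.  Granting this, the theorem follows by a slicing argument.  Set $F(x) := \int_{\mathbb{R}^{d_2}} f(x,y)\,dy$, a well-defined $[0,\infty]$-valued Borel function by Tonelli's theorem since $f \geq 0$ is measurable.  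Fix $x_0,x_1 \in \mathbb{R}^{d_1}$ and $\lambda \in (0,1)$, put $x_\lambda := \lambda x_0 + (1-\lambda)x_1$, and define $g(y) := f(x_0,y)$, $h(y) := f(x_1,y)$, $m(y) := f(x_\lambda,y)$ on $\mathbb{R}^{d_2}$.  For any $y_0,y_1 \in \mathbb{R}^{d_2}$ we have $\bigl(x_\lambda,\, \lambda y_0 + (1-\lambda)y_1\bigr) = \lambda(x_0,y_0) + (1-\lambda)(x_1,y_1)$, so log-concavity of $f$ (interpreted via $\log 0 := -\infty$, which makes the bound trivial when $f(x_0,y_0) = 0$) gives $m\bigl(\lambda y_0 + (1-\lambda)y_1\bigr) \geq g(y_0)^\lambda h(y_1)^{1-\lambda}$.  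This is exactly the hypothesis of Pr\'ekopa--Leindler in dimension $d_2$, whose conclusion reads $F(x_\lambda) \geq F(x_0)^\lambda F(x_1)^{1-\lambda}$; since $x_0,x_1,\lambda$ were arbitrary, $\log F$ is concave, i.e.\ $F$ is log-concave.

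I would prove Pr\'ekopa--Leindler itself by induction on $n$.  For the base case $n = 1$, first normalise: replacing $(g,h,m)$ by $\bigl(g/a,\, h/b,\, m/(a^\lambda b^{1-\lambda})\bigr)$ with $a := \sup g$ and $b := \sup h$ preserves the hypothesis and rescales both sides of the conclusion by the common factor $a^\lambda b^{1-\lambda}$, so we may assume $\sup g = \sup h = 1$.  Then for each $t \in (0,1)$ the superlevel sets $\{g > t\}$ and $\{h > t\}$ are nonempty, and the hypothesis forces $\lambda\{g > t\} + (1-\lambda)\{h > t\} \subseteq \{m > t\}$, since $g(y_0) > t$ and $h(y_1) > t$ imply $m\bigl(\lambda y_0 + (1-\lambda)y_1\bigr) \geq t^\lambda t^{1-\lambda} = t$.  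The elementary one-dimensional Brunn--Minkowski inequality $|A + B| \geq |A| + |B|$ (for nonempty measurable $A,B \subseteq \mathbb{R}$) then gives $|\{m > t\}| \geq \lambda|\{g > t\}| + (1-\lambda)|\{h > t\}|$; integrating the layer-cake identity $\int g = \int_0^\infty |\{g > t\}|\,dt$ over $t \in (0,1)$ (valid since $g,h \leq 1$) and applying the weighted AM--GM inequality yields $\int m \geq \lambda\int g + (1-\lambda)\int h \geq \bigl(\int g\bigr)^\lambda\bigl(\int h\bigr)^{1-\lambda}$, and undoing the normalisation completes the base case.

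For the induction step, write points of $\mathbb{R}^n$ as $(s,z)$ with $s \in \mathbb{R}$ and $z \in \mathbb{R}^{n-1}$.  Fixing $s_0,s_1 \in \mathbb{R}$ and applying the inductive hypothesis in dimension $n-1$ to the slices $z \mapsto g(s_0,z)$, $z \mapsto h(s_1,z)$, $z \mapsto m\bigl(\lambda s_0 + (1-\lambda)s_1,\, z\bigr)$ (whose pointwise hypothesis again follows from collinearity) shows that $G(s) := \int_{\mathbb{R}^{n-1}} g(s,z)\,dz$, $H(s) := \int_{\mathbb{R}^{n-1}} h(s,z)\,dz$ and $M(s) := \int_{\mathbb{R}^{n-1}} m(s,z)\,dz$ obey $M\bigl(\lambda s_0 + (1-\lambda)s_1\bigr) \geq G(s_0)^\lambda H(s_1)^{1-\lambda}$; the one-dimensional case applied to $(G,H,M)$, together with Fubini's theorem, then gives the inequality in dimension $n$.  (This is precisely the same slicing mechanism used to pass from Pr\'ekopa--Leindler to the theorem.)

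The conceptual content lies entirely in the one-dimensional base case: superlevel sets turn the multiplicative pointwise hypothesis into a Minkowski-sum containment, which one-dimensional Brunn--Minkowski and AM--GM then convert into the integral inequality; the rest is bookkeeping.  I expect the only real obstacles to be measure-theoretic: the Minkowski sum of two measurable subsets of $\mathbb{R}$ need not be measurable, so the containment should be read in terms of inner Lebesgue measure (or reduced to compact subsets by inner regularity); and one must separately handle the degenerate cases where a supremum is $0$ or $\infty$, or an integral vanishes or diverges, in which the normalisation fails but the asserted inequality is trivial or vacuous.  Finally, one should note that $F$ may legitimately equal $+\infty$ (for instance when $f$ is constant), and that the displayed multiplicative bound, interpreted in $[0,\infty]$, is exactly the statement that $F$ is log-concave.
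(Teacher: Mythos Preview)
The paper does not prove this theorem; it states it with attribution to \citet{prekopa1973logarithmic,prekopa1980logarithmic} and moves on to corollaries. Your proposal supplies the standard modern proof via the Pr\'ekopa--Leindler inequality, and the argument is correct in outline and in its handling of the edge cases. One trivial slip: in the base case you write $m\bigl(\lambda y_0 + (1-\lambda)y_1\bigr) \geq t$ where the inequality is in fact strict (since $g(y_0) > t$ and $h(y_1) > t$ strictly, and $\lambda \in (0,1)$), which is what you need for the containment in $\{m > t\}$ that you then use; this is purely a typo. Your closing remarks on the measurability of Minkowski sums and on the degenerate normalisation cases are well taken and would need to be fleshed out in a fully rigorous write-up, but the conceptual skeleton is sound and matches the route taken in standard references such as Gardner's survey on Brunn--Minkowski.
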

Theorem~\ref{Thm:Prekopa} immediately tells us that marginals of log-concave densities are log-concave.  As another application, we have the following:
\begin{corollary}
If $f,g$ are log-concave densities on $\mathbb{R}^d$, then $f \ast g$ is a log-concave density on $\mathbb{R}^d$.
\end{corollary}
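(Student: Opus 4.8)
The plan is to reduce the claim to the marginalisation result already stated as Theorem~\ref{Thm:Prekopa}. The key observation is that a convolution is a marginal of a cleverly chosen function on $\mathbb{R}^{2d}$: since $(f \ast g)(x) = \int_{\mathbb{R}^d} f(x-y)\, g(y) \, dy$, the integrand, viewed as a function $h(x,y) := f(x-y)\,g(y)$ of $(x,y) \in \mathbb{R}^d \times \mathbb{R}^d$, should be jointly log-concave on $\mathbb{R}^{2d}$, after which Theorem~\ref{Thm:Prekopa} with $d_1 = d_2 = d$ delivers log-concavity of $x \mapsto \int_{\mathbb{R}^d} h(x,y)\,dy = (f\ast g)(x)$ directly.

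First I would verify that $h$ is log-concave on $\mathbb{R}^{2d}$. Writing $\log h(x,y) = \log f(x-y) + \log g(y)$, with the conventions $\log 0 := -\infty$ and $a + (-\infty) := -\infty$, it suffices to observe that each summand is a concave function of $(x,y) \in \mathbb{R}^{2d}$: the map $(x,y) \mapsto x - y$ is linear, so $(x,y) \mapsto \log f(x-y)$ is the composition of the concave function $\log f$ with an affine map and hence concave; and $(x,y) \mapsto \log g(y)$ is concave because $\log g$ is. A sum of two $[-\infty,\infty)$-valued concave functions is concave, so $\log h$ is concave, i.e.\ $h$ is log-concave.

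Next I would address well-definedness and the `density' part. Every log-concave density is bounded --- its logarithm is a concave function on $\mathbb{R}^d$ that tends to $-\infty$ at infinity (integrability forces coercivity), hence attains a finite maximum --- so $f$ is bounded and $(f\ast g)(x) \leq \|f\|_\infty \int_{\mathbb{R}^d} g = \|f\|_\infty < \infty$ for every $x$; thus $f \ast g$ is a genuine nonnegative, finite, measurable function. By Tonelli's theorem, $\int_{\mathbb{R}^d}(f\ast g)(x)\,dx = \int_{\mathbb{R}^d} g(y)\bigl(\int_{\mathbb{R}^d} f(x-y)\,dx\bigr)\,dy = \int_{\mathbb{R}^d} g(y)\,dy = 1$, so $f\ast g$ is a density. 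Applying Theorem~\ref{Thm:Prekopa} to the log-concave function $h$ on $\mathbb{R}^{d_1+d_2}$ with $d_1=d_2=d$ then shows that $x \mapsto \int_{\mathbb{R}^{d}} h(x,y)\,dy = (f\ast g)(x)$ is log-concave on $\mathbb{R}^d$, completing the argument.

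I do not expect a serious obstacle here; the only points requiring a little care are the bookkeeping with the value $-\infty$ when asserting that a sum of concave functions is concave, and the observation that the convolution is everywhere finite, so that the assertion `$f \ast g$ is a density' is literally meaningful. The genuinely load-bearing idea is the first step --- recognising that $(x,y)\mapsto f(x-y)\,g(y)$ is jointly log-concave --- which converts the convolution into exactly the kind of marginal that Theorem~\ref{Thm:Prekopa} is designed to handle.
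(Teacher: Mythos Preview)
Your proposal is correct and follows exactly the same route as the paper: you observe that $(x,y)\mapsto f(x-y)g(y)$ is log-concave on $\mathbb{R}^{2d}$ and then invoke Theorem~\ref{Thm:Prekopa}. The paper's proof is simply the one-line version of yours; your extra remarks on finiteness and the Tonelli computation are sound but not needed for the argument as stated there.
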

\begin{proof}
The function $(x,y) \mapsto f(x-y)g(y)$ is log-concave on $\mathbb{R}^{2d}$, so the result follows from Theorem~\ref{Thm:Prekopa}.
\end{proof}
The proof of our final stability result is a straightforward exercise.
\begin{proposition}
  Let $X$ have a log-concave density on $\mathbb{R}^d$.
  \begin{enumerate}[\itshape(a)]
  \item If $A \in \mathbb{R}^{m \times d}$, with $m \leq d$ and $\mathrm{rank}(A) = m$, then $AX$ has a log-concave density on $\mathbb{R}^m$.
  \item If $X = (X_1^\top,X_2^\top)^\top$, then the conditional density of $X_1$ given $X_2=x_2$ is log-concave for every $x_2$.
\end{enumerate}
    \end{proposition}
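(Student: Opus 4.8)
The plan is to deduce both parts from Theorem~\ref{Thm:Prekopa} using two elementary facts. The first is that log-concavity of a density is preserved under invertible linear reparametrisation: if $X$ has log-concave density $f = e^{\phi}$ on $\mathbb{R}^d$ and $T \in \mathbb{R}^{d \times d}$ is invertible, then $TX$ has density $y \mapsto |\det T|^{-1} e^{\phi(T^{-1}y)}$, and $y \mapsto \phi(T^{-1}y) - \log|\det T|$ is upper semi-continuous and concave, being the composition of the concave function $\phi$ with a linear map plus a constant; since this density integrates to $1$ by the change-of-variables formula, $TX$ indeed has a log-concave density. The second fact is that the restriction of a concave function on $\mathbb{R}^{d_1 + d_2}$ to an affine subspace of the form $\mathbb{R}^{d_1} \times \{x_2\}$ is again concave.

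For part~(a), I would first note that the rows of $A$ are linearly independent, so one may pick $C \in \mathbb{R}^{(d-m) \times d}$ for which the matrix $T \in \mathbb{R}^{d \times d}$ whose first $m$ rows are those of $A$ and whose last $d-m$ rows are those of $C$ is invertible. By the first fact, $Y := TX$ has a log-concave density $g$ on $\mathbb{R}^d$, and by construction $AX$ consists of the first $m$ coordinates of $Y$; hence $AX$ has density $x \mapsto \int_{\mathbb{R}^{d-m}} g(x,y)\,dy$, a genuine probability density by Tonelli's theorem, which is log-concave on $\mathbb{R}^m$ by Theorem~\ref{Thm:Prekopa} with $d_1 = m$ and $d_2 = d - m$. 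When $m = d$ the projection step is vacuous and the claim is just the first fact.

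For part~(b), write $f$ for the density of $X = (X_1^\top,X_2^\top)^\top$ on $\mathbb{R}^{d_1 + d_2}$ and $f_{X_2}$ for the marginal density of $X_2$, which is log-concave, hence bounded and everywhere finite, by Theorem~\ref{Thm:Prekopa} (after swapping the two coordinate blocks, a special case of the first fact). For any $x_2$ with $f_{X_2}(x_2) > 0$, the conditional density of $X_1$ given $X_2 = x_2$ is $x_1 \mapsto f(x_1,x_2)/f_{X_2}(x_2)$; its logarithm is the restriction of the concave function $\log f$ to $\mathbb{R}^{d_1} \times \{x_2\}$ minus the constant $\log f_{X_2}(x_2)$, hence concave by the second fact, so the conditional density is log-concave. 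The only points needing any care — none of them a genuine obstacle, which is why the result is a straightforward exercise — are the bookkeeping in~(a) that $AX$ really is a coordinate projection of $TX$, together with the standard measure-theoretic fact that an invertible linear image, and then a marginal, of a density is a density, and the convention in~(b) that we restrict attention to $x_2$ in the support of $X_2$, so that $f_{X_2}(x_2) \in (0,\infty)$ and the conditional density is genuinely defined (for $x_2$ outside the support the statement is vacuous).
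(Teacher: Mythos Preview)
Your proof is correct, and the paper does not actually supply a proof of this proposition---it simply remarks that the result is ``a straightforward exercise''. Your argument via extending $A$ to an invertible map and invoking Theorem~\ref{Thm:Prekopa} for part~(a), and restricting $\log f$ to an affine slice for part~(b), is precisely the natural way to carry out that exercise.
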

    Thus, as mentioned in the introduction, the class of log-concave densities is closed under linear transformations, marginalisation, conditioning and convolution, just as is the class of Gaussian densities.  On the other hand, there are senses in which $\mathcal{F}_d$ is much larger than the Gaussian class.  For instance, for a bivariate Gaussian random vector $(X,Y)$, we know that the regression function $x \mapsto \mathbb{E}(Y|X=x)$ is a necessarily an affine function.  But now let $h_1:[0,1] \rightarrow (-\infty,0]$ be convex with $h_1(0) = h_1(1) = 0$ and $h_2:[0,1] \rightarrow [0,\infty)$ be concave with $h_2(0) = h_2(1) = 0$.  Suppose further that it is not the case that both $h_1$ and $h_2$ are identically zero.  Then the uniform density on the set $\{(x,y) \in [0,1] \rightarrow \mathbb{R}: h_1(x) \leq y \leq h_2(x)\}$ belongs to $\mathcal{F}_2$, but if $(X,Y)$ has this density, then the regression function is $x \mapsto \{h_1(x)+h_2(x)\}/2$.  In particular, the class of possible regression functions includes the set of functions that are differences of convex functions on $[0,1]$, which is the same as the set of functions having left and right derivatives that are of bounded variation on every compact sub-interval of $(0,1)$.
    
    \subsection{Log-concave projections}

As we saw for the Grenander estimator, the key to understanding statistical questions related to log-concave density estimation lies in a notion of projection.  
    For $\phi \in \Phi$ and an arbitrary Borel probability measure $P$ on $\mathbb{R}^d$, define the log-likelihood-type functional
    \begin{equation}
      \label{Eq:LogLikelihoodFunctional}
      L(\phi,P) := \int_{\mathbb{R}^d} \phi \, dP - \int_{\mathbb{R}^d} e^\phi + 1,
    \end{equation}
    and let $L^*(P) := \sup_{\phi \in \Phi} L(\phi,P)$.   The second and third terms in~\eqref{Eq:LogLikelihoodFunctional} account for the fact that the elements of $\Phi$ are not constrained to be log-densities, though it is interesting to note that a Lagrange multiplier is not required here.  Indeed, if $L(\phi,P) \in \mathbb{R}$ and $\int_{\mathbb{R}^d} e^\phi > 0$, and if we define $\phi + c$ pointwise for $c \in \mathbb{R}$, then
    \[
      \frac{\partial}{\partial c}L(\phi+c,P) = 1 - e^c \int_{\mathbb{R}^d} e^\phi,
    \]
    so $L(\phi+c,P)$ is maximal when $c = -\log\bigl(\int_{\mathbb{R}^d} e^\phi\bigr)$.  In other words, if $\phi^* \in \argmax_{\phi \in \Phi} L(\phi,P)$ with $L^*(P) \in \mathbb{R}$ and $\int_{\mathbb{R}^d} e^{\phi^*} > 0$, then $\phi^*$ is a log-density.  On the other hand, if $\int_{\mathbb{R}^d} e^\phi = 0$ and $L(\phi,P) > -\infty$, then by choosing~$c$ arbitrarily large we see that $L^*(P) = \infty$.

    Theorem~\ref{Thm:LCProjection} below provides the crucial characterisation of the existence and uniqueness of log-concave projection.  Let $\mathcal{P}_d$ denote the set of probability measures on $\mathbb{R}^d$ for which $\int_{\mathbb{R}^d} \|x\| \, dP(x) < \infty$ and $P(H) < 1$ for all hyperplanes $H \subseteq \mathbb{R}^d$.  The \emph{convex support}\index{convex support} of a Borel probability measure $P$ on $\mathbb{R}^d$, denoted $\mathrm{csupp}(P)$, is the intersection of all closed, convex sets $C \subseteq \mathbb{R}^d$ with $P(C) = 1$.  The \emph{interior}\index{interior of a set} of a set $S \subseteq \mathbb{R}^d$, denoted $\mathrm{int} \, S$, is the union of all open sets contained in $S$.  Finally, the \emph{effective domain}\index{effective domain of a convex or concave function} of a concave function $\phi:\mathbb{R}^d \rightarrow [-\infty,\infty)$ is $\mathrm{dom}(\phi) := \{x \in \mathbb{R}^d:\phi(x) > -\infty\}$.  
\begin{theorem}[\citealp{cule2010maximum,cule2010theoretical,dumbgen2011approximation}]
\label{Thm:LCProjection}
Let $P$ be a Borel probability measure on $\mathbb{R}^d$.
\begin{enumerate}[\itshape(a)]
\item If $\int_{\mathbb{R}^d} \|x\| \, dP(x) = \infty$, then $L^*(P) = -\infty$.
\item If $\int_{\mathbb{R}^d} \|x\| \, dP(x) < \infty$ but $P(H)  = 1$ for some hyperplane $H$, then $L^*(P) = \infty$.
\item If $P \in \mathcal{P}_d$, then $L^*(P) \in \mathbb{R}$, and there exists a well-defined projection $\psi^*:\mathcal{P}_d \rightarrow \mathcal{F}_d$, given by
  \[
    \psi^*(P) := \argmax_{f \in \mathcal{F}_d} \int_{\mathbb{R}^d} \log f \, dP.
  \]
  Moreover, $\mathrm{int} \, \mathrm{csupp}(P) \subseteq \mathrm{dom}\bigl(\log \psi^*(P)\bigr) \subseteq \mathrm{csupp}(P)$.
\end{enumerate}
\end{theorem}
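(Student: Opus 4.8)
The plan is to handle the three parts in turn, with part~(c) carrying essentially all of the work. For~(a): a coercive, upper semi-continuous concave function attains its maximum (its super-level sets are compact), so every $\phi \in \Phi$ is bounded above and $\int\phi\,dP$ is well-defined in $[-\infty,\infty)$. If $\int e^\phi = \infty$ then $L(\phi,P) = -\infty$ trivially; if $\int e^\phi < \infty$ then $e^\phi$ is an integrable log-concave function and so has an exponentially decaying tail (a companion to Lemma~\ref{Lemma:Envelope}), whence $\phi(x) \le a - b\|x\|$ for some $a \in \mathbb{R}$, $b > 0$, and $\int\phi\,dP \le a - b\int\|x\|\,dP = -\infty$; either way $L(\phi,P) = -\infty$, so $L^*(P) = -\infty$. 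For~(b): after an isometry I may assume $H = \{x = (x',x_d):x_d = 0\}$, and I would set $\phi_n(x) := c - \|x'\| + \log n - n^2 x_d^2 \in \Phi$, with $c$ chosen by a Fubini computation (independently of $n$) so that $\int e^{\phi_n} = 1$; since $x_d = 0$ holds $P$-almost surely and $\int\|x'\|\,dP < \infty$, we get $L(\phi_n,P) = \int\phi_n\,dP = c + \log n - \int\|x'\|\,dP \to \infty$.

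For part~(c), that $L^*(P) > -\infty$ is immediate on evaluating $L(\cdot,P)$ at a Laplace-type log-density $x \mapsto -\|x\| - c_d$. For $L^*(P) < \infty$ I would use $P(H) < 1$ for every hyperplane to produce $d+1$ affinely independent points whose small neighbourhoods each carry positive $P$-mass; bounding an arbitrary log-density $\phi$ above on compacta by concavity at the vertices of this simplex, and linearly at infinity as in~(a), and invoking the normalisation identity in the paragraph preceding the theorem, bounds $\int\phi\,dP$ above by a constant depending only on $P$. For existence: take a maximising sequence; the degenerate cases $\int e^{\phi_n}\in\{0,\infty\}$ force $L(\phi_n,P)=-\infty$ (when $\int e^{\phi_n}=0$, the convex set $\{\phi_n>-\infty\}$ is Lebesgue-null, hence lies in a hyperplane, so $P(\{\phi_n=-\infty\})>0$), so each $\phi_n$ may be taken to be a log-density with $L(\phi_n,P) = \int\phi_n\,dP$. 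Using $P\in\mathcal{P}_d$ one shows the $\phi_n$ admit a common $P$-integrable upper envelope and that their effective domains neither escape to infinity nor collapse onto a lower-dimensional set; a selection theorem for concave functions then extracts a subsequence converging pointwise off a null set to some $\phi^*\in\Phi$, and reverse Fatou ($\int\phi^*\,dP\ge\limsup\int\phi_n\,dP = L^*(P)$) together with Fatou ($\int e^{\phi^*}\le 1$) gives $L(\phi^*,P)\ge L^*(P)$; the normalisation identity then forces $\int e^{\phi^*}=1$, so $\psi^*(P):=e^{\phi^*}\in\mathcal{F}_d$ attains the supremum.

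Uniqueness comes from strict concavity of $\phi\mapsto-\int e^\phi$: if $\phi_1\ne\phi_2$ on a set of positive Lebesgue measure and both maximise, strict convexity of $\exp$ gives $\int e^{(\phi_1+\phi_2)/2}<\tfrac12\int e^{\phi_1}+\tfrac12\int e^{\phi_2}$, hence $L((\phi_1+\phi_2)/2,P)>L^*(P)$, a contradiction, and an approximation argument upgrades a.e.\ equality to equality of the upper semi-continuous densities. For the outer inclusion $\mathrm{dom}(\log\psi^*(P))\subseteq\mathrm{csupp}(P)$: truncating $\phi^*:=\log\psi^*(P)$ to $-\infty$ outside the closed convex set $\mathrm{csupp}(P)$ leaves $\int\phi^*\,dP$ unchanged but strictly increases $L(\cdot,P)$ after renormalisation unless $\phi^*=-\infty$ Lebesgue-a.e.\ off $\mathrm{csupp}(P)$; since $\mathrm{dom}(\phi^*)$ is convex with non-empty interior, this forces $\overline{\mathrm{dom}(\phi^*)}\subseteq\mathrm{csupp}(P)$. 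For the inner inclusion: if some $x_0\in\mathrm{int}\,\mathrm{csupp}(P)$ had $\phi^*(x_0)=-\infty$, separating $x_0$ from the convex set $\mathrm{dom}(\phi^*)$ yields $v\ne 0$ with $\phi^*\equiv-\infty$ on the open half-space $\{x:\langle v,x-x_0\rangle>0\}$, which has positive $P$-measure (otherwise its complement would be a full-measure closed convex set excluding a neighbourhood of $x_0$, contradicting $x_0\in\mathrm{int}\,\mathrm{csupp}(P)$); then $\int\phi^*\,dP=-\infty$, contradicting $L^*(P)\in\mathbb{R}$.

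The hard part will be the existence step in~(c): showing that a maximising sequence of log-densities has a common $P$-integrable envelope and cannot degenerate — drift off to infinity or flatten onto a lower-dimensional affine subspace — so that a compactness/selection theorem for concave functions applies and the supremum is attained in the limit. This is exactly where both defining properties of $\mathcal{P}_d$, the finite first moment and the absence of a hyperplane of full mass, are used in an essential way; once $\phi^*$ is in hand, the remaining assertions follow by soft arguments.
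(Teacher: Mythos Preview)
The paper is a survey and does not give a proof of this theorem; it is stated with attribution to Cule, Samworth and Stewart (2010), Cule and Samworth (2010) and D\"umbgen, Samworth and Schuhmacher (2011), and the text moves on immediately to its consequences. There is therefore nothing in the paper against which to compare your argument.

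That said, your outline is correct and tracks the standard route taken in those cited sources: the affine upper bound $\phi(x)\le a-b\|x\|$ for integrable $e^\phi$ handles~(a); a one-dimensional Gaussian spike orthogonal to $H$ handles~(b); and for~(c) you have the right skeleton --- a maximising sequence of log-densities, a common $P$-integrable envelope, a selection/compactness result for concave functions, reverse Fatou for $\int\phi_n\,dP$ and Fatou for $\int e^{\phi_n}$, strict convexity of $\exp$ for uniqueness, and the truncation and separating-hyperplane arguments for the two support inclusions. Your identification of the hard step is also accurate: the genuine work lies in showing that a maximising sequence can neither drift to infinity nor collapse toward a hyperplane, and this is exactly where both defining properties of $\mathcal{P}_d$ are consumed. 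The sketch you give there (``one shows the $\phi_n$ admit a common $P$-integrable upper envelope\ldots'') is the part that would need to be fleshed out into a full proof, typically via a quantitative version of the simplex argument you use for $L^*(P)<\infty$.
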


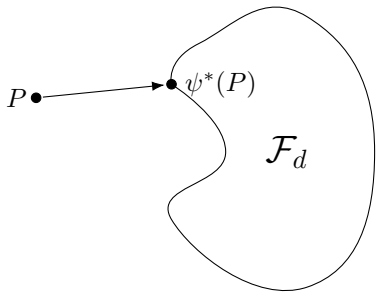
\begin{figure}[htbp!]
\begin{center}
  \begin{tikzpicture}[scale = 0.9]
    \filldraw[black] (-4,0.8) circle (2pt) node[anchor=east]{$P$};
    \filldraw[black] (-2,1) circle (2pt) node[anchor=west]{\,$\psi^*(P)$};
    \node[text width=16pt] at (-0.3,0) {\Large $\mathcal{F}_d$};
    \draw [-Latex](-3.9,0.81) -- (-2.1,0.99);
  \draw  plot[smooth, tension=0.9] coordinates {(-2,1) (-1.2,0) (-2,-1) (0,-2) (1,0) (0,2) (-1.6,1.6) (-2,1)};
\end{tikzpicture}
\caption{\label{Fig:LCProjection}Illustration of the log-concave projection $\psi^*(P)$, which is well-defined when $P \in \mathcal{P}_d$, despite the non-convexity of $\mathcal{F}_d$.}
\end{center}
\end{figure}

It is part~\emph{(c)} of Theorem~\ref{Thm:LCProjection} that is particularly interesting: even though $\mathcal{F}_d$ is not a convex set, there is still a notion of log-concave projection\index{log-concave projection} via maximum likelihood; see Figure~\ref{Fig:LCProjection}.  In particular, the result provides a very natural way to fit log-concave densities to data.  Given $X_1,\ldots,X_n$ in $\mathbb{R}^d$ having $d$-dimensional convex hull~$C_n$ and empirical distribution $\mathbb{P}_n$, we can use the MLE $\hat{f}_n := \psi^*(\mathbb{P}_n)$.  The last part of Theorem~\ref{Thm:LCProjection} reveals that this density estimator is supported on $C_n$.

One of the great attractions of the log-concave maximum likelihood estimator is that, in contrast to kernel density estimation methods or other traditional nonparametric smoothing techniques, there are no tuning parameters to choose.  On the other hand, in general there is no closed-form expression for $\hat{f}_n$, so optimisation algorithms are required to compute the estimator.

When $d=1$, an Active Set algorithm can be used to compute the log-concave maximum likelihood estimator very efficiently \citep{dumbgen2007active,dumbgen2011logcondens}; up to machine precision, it terminates with the exact solution in finitely many steps.  On the other hand, when $d \geq 2$, the feasible set is much more complicated, and only slower algorithms are available.  For $y = (y_1,\ldots,y_n)^\top \in \mathbb{R}^n$, let $\bar{h}_y:\mathbb{R}^d \rightarrow [-\infty,\infty)$ denote the smallest concave function with $\bar{h}_y(X_i) \geq y_i$ for $i \in [n]$; these are called \emph{tent functions}\index{tent function} in the literature (see Figure~\ref{Fig:Schematic}).  It can be shown that the log-concave MLE belongs to the class of tent functions, which is finite-dimensional.  We can therefore write the objective function in terms of the tent pole heights $y = (y_1,\ldots,y_n)^\top$~as 
\[
\tau(y) \equiv \tau(y_1,\ldots,y_n) := \frac{1}{n}\sum_{i=1}^n \bar{h}_y(X_i) - \int_{C_n} \exp\{\bar{h}_y(x)\} \, dx.
\]

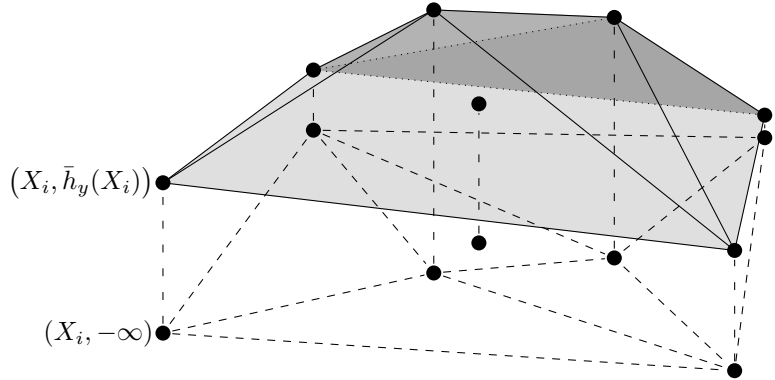
\begin{figure}
\centering
  \begin{tikzpicture}
 \node (v1) at (0,-2) {};
    \node (v2) at (0,0) {};
    \node (v3) at (2,0.7) {};
    \node (v4) at (2,1.5) {};
    \node (v5) at (3.6,-1.2) {};
    \node (v6) at (3.6,2.3) {};
    \node (v7) at (4.2,-0.8) {};
    \node (v8) at (4.2,1.05) {};
    \node (v9) at (6,-1) {};
    \node (v10) at (6,2.2) {};
    \node (v11) at (7.6,-2.5) {};
    \node (v12) at (7.6,-0.9) {};
    \node (v13) at (8,0.6) {};
    \node (v14) at (8,0.9) {};
    \filldraw[color=gray!25](0,0) -- (3.6,2.3) -- (7.6,-0.9) -- (0,0);
    \filldraw[color=gray!25](0,0) -- (2,1.5) -- (3.6,2.3) -- (0,0);
    \filldraw[color=gray!25](3.6,2.3) -- (4.2,1.05) -- (7.6,-0.9);
    \filldraw[color=gray!25](3.6,2.3) -- (6,2.2) -- (7.6,-0.9);
    \filldraw[color=gray!25](6,2.2) -- (7.6,-0.9) -- (8,0.9);
    \filldraw[color=gray!60](2,1.5) -- (3.6,2.3) -- (6,2.2);
    \filldraw[color=gray!70](2,1.5) -- (6,2.2) -- (8,0.9);
    \fill (v1) circle (0.1) node [left] {$(X_i,-\infty)$};
    \fill (v2) circle (0.1) node [left] {$\bigl(X_i,\bar{h}_y(X_i)\bigr)$};
    \fill (v3) circle (0.1);
    \fill (v4) circle (0.1);
    \fill (v5) circle (0.1);
    \fill (v6) circle (0.1);
    \fill (v7) circle (0.1);
    \fill (v8) circle (0.1);
    \fill (v9) circle (0.1);
    \fill (v10) circle (0.1);
    \fill (v11) circle (0.1);
    \fill (v12) circle (0.1);
    \fill (v13) circle (0.1);
    \fill (v14) circle (0.1);
    \draw[loosely dashed](v1) -- (v2);
    \draw[loosely dashed](v3) -- (v4);
    \draw[loosely dashed](v5) -- (v6);
    \draw[loosely dashed](v7) -- (v8);
    \draw[loosely dashed](v9) -- (v10);
    \draw[loosely dashed](v11) -- (v12);
    \draw[loosely dashed](v13) -- (v14);
    \draw[dashed](v1) -- (v11);
    \draw[dashed](v1) -- (v3);
    \draw[dashed](v1) -- (v5);
    \draw[dashed](v3) -- (v5);
    \draw[dashed](v3) -- (v9);
    \draw[dashed](v3) -- (v13);
    \draw[dashed](v5) -- (v9);
    \draw[dashed](v5) -- (v11);
    \draw[dashed](v9) -- (v11);
    \draw[dashed](v9) -- (v13);
    \draw[dashed](v11) -- (v13);
    \draw[solid](0,0) -- (2,1.5);
    \draw[solid](0,0) -- (3.6,2.3);
    \draw[solid](0,0) -- (7.6,-0.9);
    \draw[solid](2,1.5) -- (3.6,2.3);
    \draw[solid](3.6,2.3) -- (6,2.2);
    \draw[solid](3.6,2.3) -- (7.6,-0.9);
    \draw[solid](6,2.2) -- (7.6,-0.9);
    \draw[solid](6,2.2) -- (8,0.9);
    \draw[solid](7.6,-0.9) -- (8,0.9);
    \draw[dotted](2,1.5) -- (6,2.2);
    \draw[dotted](2,1.5) -- (8,0.9);
 \end{tikzpicture}
 \caption{\label{Fig:Schematic}A schematic picture of a tent function in the case $d=2$.}
\end{figure}

\noindent This function is hard to optimise over $(y_1,\ldots,y_n)^\top \in \mathbb{R}^n$.  A key observation, however, is that we can define the modified objective function
\[
\sigma(y) \equiv \sigma(y_1,\ldots,y_n) := \frac{1}{n}\sum_{i=1}^n y_i - \int_{C_n} \exp\{\bar{h}_y(x)\} \, dx.
\]
Thus $\sigma \leq \tau$, but the crucial points are that $\sigma$ is concave and its unique maximum $\hat{y} \in \mathbb{R}^n$ satisfies $\log \hat{f}_n = \bar{h}_{\hat{y}}$.  Even though $\sigma$ is non-differentiable, a subgradient of $-\sigma$ can be computed at every point.  This motivates the use of Shor's $r$-algorithm, as well as methods based on Nesterov and randomised smoothing, to compute~$\hat{f}_n$ \citep{cule2009logconcdead,chen2024new}.  See Figures~\ref{Fig:Schematic} and~\ref{Fig:2D}.  

\begin{figure}
\centering
\includegraphics[width=0.45\textwidth]{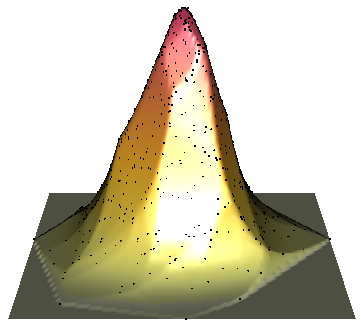} \hspace{0.2cm}
\includegraphics[width=0.45\textwidth]{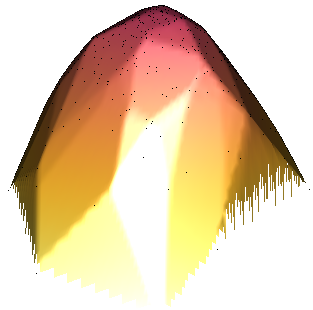}
\caption{\label{Fig:2D}The log-concave maximum likelihood estimator (left) and its logarithm (right) based on 1000 observations from a standard bivariate normal distribution.}
\end{figure}

\subsection{Properties of log-concave projections}

Although, for a general distribution $P \in \mathcal{P}_d$, the log-concave projection $\psi^*(P)$ does not have a closed form, we can nevertheless say quite a lot about its properties, starting with affine equivariance:

\begin{lemma}[\citealp{dumbgen2011approximation}]
\label{Lemma:Affine}
Let $X \sim P \in \mathcal{P}_d$, let $A \in \mathbb{R}^{d \times d}$ be invertible, let $b \in \mathbb{R}^d$, and let $P_{A,b}$ denote the distribution of $AX + b$.  Then $P_{A,b} \in \mathcal{P}_d$ and
\[
\psi^*(P_{A,b})(x) = \frac{1}{|\det A|} \psi^*(P)\bigl(A^{-1}(x-b)\bigr).
\]
\end{lemma}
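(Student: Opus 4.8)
The plan is to reduce the statement to an affine change-of-variables identity for the log-likelihood functional $f \mapsto \int_{\mathbb{R}^d} \log f \, dP$, and then invoke the uniqueness of the maximiser provided by Theorem~\ref{Thm:LCProjection}(c). First I would check that $P_{A,b} \in \mathcal{P}_d$, so that $\psi^*(P_{A,b})$ is defined. Since $\|Ax+b\| \leq \|A\|_{\mathrm{op}}\|x\| + \|b\|$, we get $\int_{\mathbb{R}^d} \|y\| \, dP_{A,b}(y) = \int_{\mathbb{R}^d} \|Ax+b\| \, dP(x) < \infty$; and because $A$ is invertible, the preimage under $x \mapsto Ax+b$ of any hyperplane $H \subseteq \mathbb{R}^d$ is again a hyperplane $H'$, so $P_{A,b}(H) = P(H') < 1$. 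Hence $P_{A,b} \in \mathcal{P}_d$.

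Next I would introduce the map $T_{A,b}$ on densities defined by $(T_{A,b}f)(x) := |\det A|^{-1} f\bigl(A^{-1}(x-b)\bigr)$, i.e.\ the density of $AX+b$ when $X$ has density $f$, and verify that it is a bijection from $\mathcal{F}_d$ onto itself with inverse $T_{A^{-1},-A^{-1}b}$: it integrates to $1$ by the substitution $y = Ax+b$; log-concavity is preserved because $\log(T_{A,b}f)(x) = \log f\bigl(A^{-1}(x-b)\bigr) - \log|\det A|$ is the composition of the concave function $\log f$ with an affine bijection, shifted by a constant, hence concave (and still coercive); and upper semi-continuity is likewise preserved. The key computation is then the change of variables
\[
\int_{\mathbb{R}^d} \log(T_{A,b}f) \, dP_{A,b} = \int_{\mathbb{R}^d} \bigl\{\log f\bigl(A^{-1}(y-b)\bigr) - \log|\det A|\bigr\} \, dP_{A,b}(y) = \int_{\mathbb{R}^d} \log f \, dP - \log|\det A|,
\]
valid for every $f \in \mathcal{F}_d$, with both sides lying in $[-\infty,\infty)$.

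Finally, substituting $g = T_{A,b}f$ and using that $T_{A,b}$ is a bijection of $\mathcal{F}_d$, together with the fact that adding the constant $-\log|\det A|$ does not affect the argmax,
\[
\psi^*(P_{A,b}) = \argmax_{g \in \mathcal{F}_d} \int_{\mathbb{R}^d} \log g \, dP_{A,b} = T_{A,b}\Bigl(\argmax_{f \in \mathcal{F}_d} \int_{\mathbb{R}^d} \log f \, dP\Bigr) = T_{A,b}\bigl(\psi^*(P)\bigr),
\]
and unwinding the definition of $T_{A,b}$ gives the asserted formula. I expect the only genuine care required is the bookkeeping around the change-of-variables step — in particular checking that all the integrals are well-defined, which follows because a log-concave density is bounded (its log-density is concave, coercive and upper semi-continuous, hence attains a finite maximum), so each integrand is bounded above and no $\infty - \infty$ ambiguity arises; the uniqueness clause in Theorem~\ref{Thm:LCProjection}(c) then makes the manipulation of argmaxes legitimate. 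The non-convexity of $\mathcal{F}_d$ plays no role here, since the argument transports the maximiser directly rather than exploiting convexity.
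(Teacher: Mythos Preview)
The paper does not supply its own proof of this lemma; it is stated with a citation to \citet{dumbgen2011approximation} and then used without further argument. Your proof is correct and is precisely the standard argument: verify membership of $P_{A,b}$ in $\mathcal{P}_d$, observe that the pushforward map $T_{A,b}$ is a bijection on $\mathcal{F}_d$, compute the change-of-variables identity for the log-likelihood functional (which shifts by the constant $-\log|\det A|$), and conclude by uniqueness of the maximiser in Theorem~\ref{Thm:LCProjection}(c). The care you take over well-definedness of the integrals (boundedness above via upper semi-continuity and coercivity) is appropriate, and nothing is missing.
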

Lemma~\ref{Lemma:Affine} tells us that log-concave projection commutes with invertible affine transformations $T$: writing $P^*$ for the distribution corresponding to $\psi^*(P)$, and with $X \sim P$ and $X^* \sim P^*$, we have $T(X)^* \stackrel{d}{=} T(X^*)$. 

We would like the log-concave projection to preserve as many properties of the original distribution as possible.  Indeed, such preservation results have motivated several associated methodological developments, including the \emph{smoothed log-concave MLE} \citep{dumbgen2009maximum,chen2013smoothed} and a new approach to independent component analysis \citep{samworth2012independent}.  One result in this direction can be obtained from the first-order stationarity conditions.
\begin{lemma}[\citealp{dumbgen2011approximation}]
\label{Lemma:Basic}
Let $P \in \mathcal{P}_d$, let $\phi^* := \log \psi^*(P)$, and let $P^*(B) := \int_B e^{\phi^*}$ for any Borel set $B \subseteq \mathbb{R}^d$.  If $\Delta:\mathbb{R}^d \rightarrow [-\infty,\infty)$ is such that $\phi^* + t\Delta \in \Phi$ for some $t > 0$, then 
\[
\int_{\mathbb{R}^d} \Delta \, dP \leq \int_{\mathbb{R}^d} \Delta \, dP^*.
\]
\end{lemma}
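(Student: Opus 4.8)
The plan is to exploit the optimality of $\phi^*$ along the line segment joining $\phi^*$ to $\phi^* + t\Delta$, effectively differentiating the functional $L(\cdot,P)$ at the endpoint $\phi^*$. As preliminaries from Theorem~\ref{Thm:LCProjection}(c) and the discussion preceding it: since $P\in\mathcal{P}_d$ we have $L^*(P)\in\mathbb{R}$, the maximiser $\phi^*$ of $\phi\mapsto L(\phi,P)$ over $\Phi$ is a genuine log-density with $\int_{\mathbb{R}^d}e^{\phi^*}=1$ (so $P^*$ is indeed a probability measure) and $L(\phi^*,P)=\int_{\mathbb{R}^d}\phi^*\,dP\in\mathbb{R}$. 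Since $\Phi$ is convex and contains both $\phi^*$ and $\phi^*+t\Delta$, it contains $\phi_s:=\phi^*+s\Delta=(1-s/t)\phi^*+(s/t)(\phi^*+t\Delta)$ for every $s\in[0,t]$, whence $L(\phi_s,P)\le L(\phi^*,P)$ for all such $s$.

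Next I would dispose of the case $\int_{\mathbb{R}^d}\Delta\,dP=-\infty$, in which the asserted inequality is automatic. (Note that $\int_{\mathbb{R}^d}\Delta_+\,dP<\infty$ always, since on $\mathrm{dom}(\phi^*)$ both $\phi^*$ and $\phi_s$ are bounded above, so $\int\Delta\,dP$ is well-defined in $[-\infty,\infty)$; and one checks from $(\phi_s)_-\le(\phi^*)_-+s\Delta_-$ that if $L(\phi_s,P)=-\infty$ then $\int\Delta\,dP=-\infty$.) So we may assume $L(\phi_s,P)\in\mathbb{R}$ for all small $s>0$. Subtracting the finite quantities $\int\phi^*\,dP$ and $\int e^{\phi^*}$ from the inequality $L(\phi_s,P)\le L(\phi^*,P)$ and dividing by $s>0$ yields
\[
  \int_{\mathbb{R}^d}\Delta\,dP\;\le\;\int_{\mathbb{R}^d}e^{\phi^*}\,\frac{e^{s\Delta}-1}{s}\qquad\mbox{for all }s\in(0,t].
\]

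It remains to let $s\downarrow 0$ on the right and to identify the limit with $\int_{\mathbb{R}^d}e^{\phi^*}\Delta=\int_{\mathbb{R}^d}\Delta\,dP^*$. The key elementary fact is that for each fixed $\Delta\in[-\infty,\infty)$ the map $s\mapsto(e^{s\Delta}-1)/s$ is nondecreasing on $(0,\infty)$ and decreases to $\Delta$ as $s\downarrow 0$; this follows by observing that the numerator $s\Delta e^{s\Delta}-e^{s\Delta}+1$ vanishes at $s=0$ and has derivative $s\Delta^2e^{s\Delta}\ge 0$. Hence the integrands $e^{\phi^*}(e^{s\Delta}-1)/s$ decrease pointwise to $e^{\phi^*}\Delta$ as $s\downarrow 0$ and are all bounded above by the $s=t$ integrand $t^{-1}(e^{\phi^*+t\Delta}-e^{\phi^*})$, which is integrable because $\phi^*+t\Delta\in\Phi$ has $\int e^{\phi^*+t\Delta}<\infty$ (elements of $\Phi$ have integrable exponentials) and $\int e^{\phi^*}=1$. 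Applying the monotone convergence theorem to the nonnegative, nondecreasing family obtained by subtracting each integrand from the $s=t$ one then gives $\int e^{\phi^*}(e^{s\Delta}-1)/s\to\int e^{\phi^*}\Delta$ as $s\downarrow 0$, and combining with the display above completes the proof.

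I expect the only real obstacle to be bookkeeping: making the limit interchange rigorous while $\phi^*$ and $\Delta$ may take the value $-\infty$, the integrals $\int\phi^*\,dP$, $\int\Delta\,dP$ and $\int e^{\phi^*}\Delta$ may be only one-sidedly finite, and $\int e^{\phi^*+s\Delta}$ must stay controlled as $s\downarrow 0$. The monotonicity of $s\mapsto(e^{s\Delta}-1)/s$ is exactly what resolves all of this at once: it pins down the direction of convergence and simultaneously supplies a dominating function, so that neither differentiation under the integral sign nor a delicate dominated-convergence estimate is required. One should also check that the conventions $0\cdot(-\infty)=0$ (on the set where $\phi^*=-\infty$ or $\Delta=-\infty$) are applied consistently, but these do not affect the argument.
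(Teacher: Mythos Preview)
The paper does not actually supply a proof of this lemma: it is stated with attribution to \citet{dumbgen2011approximation} and then used without further argument. Your proposal is correct and is precisely the standard first-order variational argument one finds in that reference --- perturb $\phi^*$ along the admissible direction $\Delta$, use optimality to get the difference-quotient inequality, and pass to the limit via the monotonicity of $s\mapsto (e^{s\Delta}-1)/s$. The bookkeeping you flag (one-sided finiteness of $\int\Delta\,dP$, integrability of $e^{\phi^*+t\Delta}$ for $\phi^*+t\Delta\in\Phi$, and the $-\infty$ conventions) is handled correctly.
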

As a special case of Lemma~\ref{Lemma:Basic}, we obtain 
\begin{corollary}
\label{Cor:Moment}
Let $P \in \mathcal{P}_d$.  Then $P$ and the log-concave projection measure~$P^*$ from Lemma~\ref{Lemma:Basic} are convex ordered in the sense that 
\[
\int_{\mathbb{R}^d} h \, dP^* \leq \int_{\mathbb{R}^d} h \, dP
\]
for all convex $h:\mathbb{R}^d \rightarrow (-\infty,\infty]$.  
\end{corollary}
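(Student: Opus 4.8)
The plan is to derive the convex ordering from the first-order variational inequality in Lemma~\ref{Lemma:Basic} by taking $\Delta = -h$. For convex $h:\mathbb{R}^d\to(-\infty,\infty]$, the function $\Delta := -h$ is concave with values in $[-\infty,\infty)$, and Lemma~\ref{Lemma:Basic}, once applicable, gives $\int_{\mathbb{R}^d}\Delta\,dP\le\int_{\mathbb{R}^d}\Delta\,dP^*$, which is precisely $\int_{\mathbb{R}^d}h\,dP^*\le\int_{\mathbb{R}^d}h\,dP$. So the entire task reduces to checking the hypothesis of Lemma~\ref{Lemma:Basic}, namely that $\phi^* - th \in \Phi$ for some $t > 0$.

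I would first handle the case that $h$ is real-valued, hence continuous. Then $\phi^* - th$ is concave (a sum of concave functions) and upper semi-continuous (since $\phi^*$ is upper semi-continuous and $th$ is continuous), so the only substantive point is coercivity. For this I would combine two facts: $\psi^*(P)$ is a log-concave density and therefore has exponentially decaying tails, say $\phi^*(x)\le -a\|x\|+b$ for suitable $a>0$ and $b\in\mathbb{R}$; and a finite convex function has an affine minorant $h(x)\ge\langle v,x\rangle+c$ (take any subgradient). Then $\phi^*(x)-th(x)\le -(a-t\|v\|)\|x\|+b-tc\to-\infty$ as soon as $t\|v\|<a$, so $\phi^*-th\in\Phi$ for all sufficiently small $t>0$ and Lemma~\ref{Lemma:Basic} applies. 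I would also note that both integrals in the conclusion are well-defined in $(-\infty,\infty]$, since the affine minorant of $h$ is integrable with respect to $P\in\mathcal{P}_d$ and with respect to the exponentially-tailed density $\psi^*(P)$.

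It then remains to extend from real-valued convex $h$ to convex $h$ that may take the value $+\infty$. Here I would pass to the lower semi-continuous hull $\bar h\le h$, which agrees with $h$ except possibly on the boundary of the convex set $\mathrm{dom}(h)$, a Lebesgue-null set; since $\psi^*(P)$ has a density, $\int_{\mathbb{R}^d} h\,dP^*=\int_{\mathbb{R}^d}\bar h\,dP^*$. Writing $\bar h$ as an increasing limit of finite-valued convex functions (finite maxima of its affine minorants), the real-valued case together with monotone convergence gives $\int_{\mathbb{R}^d}\bar h\,dP^*\le\int_{\mathbb{R}^d}\bar h\,dP\le\int_{\mathbb{R}^d} h\,dP$, which completes the argument. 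I expect the coercivity step to be the only point requiring care: the whole argument rests on the exponential tail bound for the log-concave projection, so it is important to invoke (or verify directly, via Lemma~\ref{Lemma:Envelope}(b) after reducing $\psi^*(P)$ to isotropic position) that $\phi^*$ decays at least linearly, uniformly over directions.
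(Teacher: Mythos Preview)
Your proposal is correct and follows the same approach as the paper, which simply states the corollary ``as a special case of Lemma~\ref{Lemma:Basic}'' without giving any details. Your choice $\Delta=-h$ together with the verification that $\phi^*-th\in\Phi$ for small $t>0$ (concavity, upper semi-continuity, and coercivity via the exponential tail bound for log-concave densities and an affine minorant of $h$), followed by the approximation argument for $(+\infty)$-valued convex $h$, is precisely the content the paper is implicitly invoking.
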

Applying Corollary~\ref{Cor:Moment} to $h(x) = t^\top x$ for arbitrary $t \in \mathbb{R}^d$ allows us to conclude that $\int_{\mathbb{R}^d} x \, dP^*(x) = \int_{\mathbb{R}^d} x \, dP(x)$; in other words, log-concave projection preserves the mean $\mu$ of a distribution $P \in \mathcal{P}_d$.  On the other hand, we see that the projection shrinks the second moment, in the sense that $A := \int_{\mathbb{R}^d} (x-\mu)(x-\mu)^\top d(P-P^*)(x)$ is non-negative definite.  In fact, we can say more: from the convex ordering in Corollary~\ref{Cor:Moment} and Strassen's theorem \citep{strassen1965existence}, there exist random vectors $X \sim P$ and $X^* \sim P^*$, defined on the same probability space, such that $\mathbb{E}(X|X^*) = X^*$ almost surely.  Thus $\mathbb{E}\{X^*(X-X^*)^\top\} = 0$, so
\[
\mathrm{Cov}(X) = \mathrm{Cov}(X^* + X - X^*) = \mathrm{Cov}(X^*) + \mathrm{Cov}(X - X^*)
\] 
and we deduce that $A=0$ if \emph{and only if} $P$ has a log-concave density.  The smoothed log-concave MLE exploits Corollary~\ref{Cor:Moment} by defining the new estimator $\tilde{f}_n := \hat{f}_n \ast N_d(0,\hat{A})$, where $\hat{A}$ is the sample version of $A$ above.  This estimator remains log-concave, is smooth (real analytic), and matches the first two moments of the data.

\subsection{H\"older continuity and risk bounds}
\label{Sec:Holder}

As for the Grenander projection, the log-concave projection has a continuity property, but this is a little more involved and we will require a little preparatory work.  Given Borel probability distributions $P, Q$ on $\mathbb{R}^d$, their \emph{Wasserstein distance}\index{Wasserstein distance} is defined as
\[
  d_{\mathrm{W}}(P,Q) := \inf_{(X,Y) \sim (P,Q)} \mathbb{E}\|X-Y\|,
\]
where the infimum is taken over all pairs $(X,Y)$, defined on the same probability space, with $X \sim P$ and $Y \sim Q$.  
Further, whenever $P$ has mean $\mu \in \mathbb{R}^d$ and $X \sim P$, we define 
\[
  \epsilon_P := \inf_{u \in \mathbb{S}_{d-1}} \mathbb{E}\bigl\{|u^\top(X-\mu)|\bigr\},
\]
where $\mathbb{S}_{d-1} := \{u \in \mathbb{R}^d:\|u\|=1\}$ denotes the Euclidean unit sphere.  The quantity $\epsilon_P$ can be thought of as a robust analogue of the minimum eigenvalue of the covariance matrix of the distribution $P$ (note that its definition does not require $P$ to have a finite second moment).  We can also interpret $\epsilon_P$ as measuring the extent to which~$P$ avoids placing all its mass on a single hyperplane.  

\begin{theorem}[\citealp{barber2021local}]
  \label{Thm:Holder}
  Whenever $P \in \mathcal{P}_d$, we have $\epsilon_P > 0$.  Moreover, there exists $C_d^\circ > 0$, depending only on $d$, such that for all $P,Q \in \mathcal{P}_d$, we have
\[
\mathrm{H}\bigl(\psi^*(P),\psi^*(Q)\bigr) \leq C_d^\circ\biggl\{\frac{d_{\mathrm{W}}(P,Q)}{\max(\epsilon_P,\epsilon_Q)}\biggr\}^{1/4}.
\]
\end{theorem}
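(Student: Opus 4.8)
Both parts have short-ish proofs, but by different routes. For $\epsilon_P>0$: with $X\sim P\in\mathcal P_d$ having mean $\mu$, the map $u\mapsto\mathbb E\bigl\{|u^\top(X-\mu)|\bigr\}$ is real-valued (since $\int_{\mathbb R^d}\|x\|\,dP(x)<\infty$) and continuous on the compact sphere $\mathbb S_{d-1}$ by dominated convergence, so it attains its infimum at some $u_0\in\mathbb S_{d-1}$; since $P(H)<1$ for every hyperplane $H$, the variable $u_0^\top(X-\mu)$ is non-degenerate, whence $\epsilon_P=\mathbb E\bigl\{|u_0^\top(X-\mu)|\bigr\}>0$. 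For the inequality, write $\phi_P^*:=\log\psi^*(P)$ and $\phi_Q^*:=\log\psi^*(Q)$, and note that $\bar\phi:=(\phi_P^*+\phi_Q^*)/2$ again lies in $\Phi$. Comparing the concave functional $L(\cdot,P)$ of Theorem~\ref{Thm:LCProjection} at $\phi_P^*$ against $\bar\phi$, comparing $L(\cdot,Q)$ at $\phi_Q^*$ against $\bar\phi$, adding, and using $e^{a}+e^{b}-2e^{(a+b)/2}=(e^{a/2}-e^{b/2})^2$ so that the $\int e^{\phi}$ and constant terms telescope, I would obtain the \emph{basic inequality}
\[
\mathrm H^2\bigl(\psi^*(P),\psi^*(Q)\bigr)\le\frac12\int_{\mathbb R^d}\bigl(\phi_P^*-\phi_Q^*\bigr)\,d(P-Q).
\]
Everything then reduces to bounding the right-hand side by a constant (depending only on $d$) times $\{d_{\mathrm W}(P,Q)/\max(\epsilon_P,\epsilon_Q)\}^{1/2}$; taking square roots gives the claimed exponent $1/4$.

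Next I would normalise. Hellinger distance is affine-invariant, $\psi^*$ commutes with invertible affine maps (Lemma~\ref{Lemma:Affine}), and $d_{\mathrm W}(P,Q)/\max(\epsilon_P,\epsilon_Q)$ is unchanged under a common translation and isotropic rescaling, so I may assume $\mu_P=0$ and $\max(\epsilon_P,\epsilon_Q)=1$, and also that $d_{\mathrm W}(P,Q)$ is less than a dimensional constant, the inequality being trivial otherwise as $\mathrm H\le\sqrt2$. A coupling argument gives $|\epsilon_P-\epsilon_Q|\le2\,d_{\mathrm W}(P,Q)$, so both $\epsilon_P$ and $\epsilon_Q$ are bounded below by $1/2$, say. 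Combining this with the convex-ordering and mean-preservation facts of Corollary~\ref{Cor:Moment} and the envelope and lower bounds of Lemma~\ref{Lemma:Envelope}, I would pin down the location and the thin-direction scale of $\psi^*(P)$ and $\psi^*(Q)$ to within dimensional constants; in particular there is a ball $B$ of dimensional radius on which $\phi_P^*$ and $\phi_Q^*$ are trapped between dimensional constants, hence (a concave function bounded above and below on a ball is Lipschitz on a smaller concentric ball) are Lipschitz there with a constant depending only on $d$, while $\psi^*(P)$ and $\psi^*(Q)$ decay exponentially outside $B$. I would then split $\int(\phi_P^*-\phi_Q^*)\,d(P-Q)$ over $B$ and its complement: on $B$, a Lipschitz extension together with Kantorovich--Rubinstein duality gives a contribution of order $d_{\mathrm W}(P,Q)$.

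The tail term is the crux, and it is responsible for the exponent dropping from $1/2$ to $1/4$. The obstruction is that $\phi_P^*=-\infty$ off $\mathrm{dom}(\phi_P^*)\subseteq\mathrm{csupp}(P)$: if $Q$ puts mass $m$ just outside $\mathrm{csupp}(P)$, at distance about $\ell$, then $\int(\phi_P^*-\phi_Q^*)\,d(P-Q)=+\infty$, and the basic inequality must instead be applied to a dilation of $P$ that covers the support of $Q$, together with a truncation of $Q$ when that dilation would itself be too large. Dilating $P$ by $\ell$ perturbs $\psi^*(P)$ by order $\sqrt\ell$ in Hellinger distance, truncating mass $m$ from $Q$ perturbs $\psi^*(Q)$ by order $\sqrt m$, and $m\ell\lesssim d_{\mathrm W}(P,Q)$ because transporting that mass back onto $\mathrm{csupp}(P)$ already costs of order at least $m\ell$; balancing the dilation radius against these errors yields a contribution of order $d_{\mathrm W}(P,Q)^{1/4}$, the worst case being $m\asymp\ell\asymp d_{\mathrm W}(P,Q)^{1/2}$. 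Adding the central and tail estimates, substituting into the basic inequality, and undoing the normalisation then gives the theorem. I expect the genuinely delicate work to be in (i) making the envelope-based control uniform in $d$ despite possible anisotropy of $\psi^*(P),\psi^*(Q)$ — one cannot simply whiten, since members of $\mathcal P_d$ need not have a second moment, so the fat directions, where the projected log-densities are nearly flat and their difference is automatically small, must be handled separately from the thin directions — and (ii) executing the dilation-and-truncation bookkeeping for the tail with the loss of only a fourth power.
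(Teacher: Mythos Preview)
The survey paper does not actually prove Theorem~\ref{Thm:Holder}; it is stated with attribution to \citet{barber2021local} and then only discussed (sharpness of the exponent, failure of global uniform continuity), so there is no ``paper's own proof'' to compare against. That said, your sketch is a faithful outline of the argument in the cited source, and the main ideas are in the right place: the compactness argument for $\epsilon_P>0$ is correct; your derivation of the basic inequality
\[
\mathrm H^2\bigl(\psi^*(P),\psi^*(Q)\bigr)\le\tfrac12\int_{\mathbb R^d}(\phi_P^*-\phi_Q^*)\,d(P-Q)
\]
from the optimality of $\phi_P^*,\phi_Q^*$ and the AM--GM identity is exactly what drives the proof; and you have correctly identified that the boundary issue (mass of $Q$ just outside $\mathrm{csupp}(P)$, where $\phi_P^*=-\infty$) forces a dilation/truncation step and is what degrades the exponent from $1/2$ to $1/4$.

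One point to flag on your normalisation step: you propose to invoke Lemma~\ref{Lemma:Envelope}, but that lemma is stated for $f\in\mathcal F_d^{0,I}$, and controlling $\epsilon_P$ alone does not place $\psi^*(P)$ in any fixed $\mathcal F_d^{\mu,\Sigma}$. The original proof does not whiten; instead it works directly with moment-type quantities built from $\epsilon_P$ (and bounds on $L^*(P)$) to obtain the needed pointwise upper and lower bounds and Lipschitz control on a core region. You seem aware of this from your closing remarks about anisotropy and the absence of second moments, but as written the sentence ``Combining this with \ldots\ the envelope and lower bounds of Lemma~\ref{Lemma:Envelope}'' over-promises: that lemma is not directly applicable, and the replacement envelope argument is a substantial part of the work in \citet{barber2021local}.
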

Theorem~\ref{Thm:Holder} states that the log-concave projection is locally H\"older-$(1/4)$ continuous, when considered as a metric space map from $(\mathcal{P}_d,d_{\mathrm{W}})$ to $(\mathcal{F}_d,\mathrm{H})$.  It is natural to ask whether the map is in fact globally H\"older continuous, but in fact it is not even uniformly continuous: for instance, let $P_n = U[-1/n,1/n]$ and $Q_n = U[-1/n^2,1/n^2]$.  Then $d_{\mathrm{W}}(P_n,Q_n) = \frac{1}{2n} - \frac{1}{2n^2} \rightarrow 0$, but since $\psi^*(P_n) = \frac{n}{2}\mathbbm{1}_{\{[-1/n,1/n]\}}$ and $\psi^*(Q_n) = \frac{n^2}{2}\mathbbm{1}_{\{x \in [-1/n^2,1/n^2]\}}$, we have
\[
\mathrm{H}\bigl(\psi^*(P_n),\psi^*(Q_n)\bigr) = 2 - \frac{2}{n^{1/2}} \nrightarrow 0
\]
as $n \rightarrow \infty$.  In this counterexample, we have $\max(\epsilon_{P_n},\epsilon_{Q_n}) = 1/(2n) \rightarrow 0$, so there is no contradiction of Theorem~\ref{Thm:Holder}.  Moreover, it turns out that the exponent $1/4$ in Theorem~\ref{Thm:Holder} cannot be improved in general.  

One of the main attractions of the quantitative nature of the continuity result in Theorem~\ref{Thm:Holder} is that it facilitates a very general risk bound for the log-concave MLE as an estimator of the log-concave projection of the underlying distribution that holds even in misspecified settings.  We state the result in the univariate case for simplicity, and for $q \geq 1$ and $P \in \mathcal{P}_1$, write $\mu_q(P) := \bigl\{\int_{-\infty}^\infty |x|^q \, dP(x)\bigr\}^{1/q}$.
\begin{theorem}[\citealp{barber2021local}]
\label{Thm:MisspecifiedRate}
    Let $n \geq 2$, and let $X_1,\ldots,X_n \stackrel{\mathrm{iid}}{\sim} P \in \mathcal{P}_1$, with empirical distribution $\mathbb{P}_n$.  Fix $q > 1$. 
    \begin{enumerate}[\itshape(a)]
    \item \textbf{Upper bound}:  Suppose that $\mu_q(P) \leq M_q$.  Then there exists $C_q' > 0$, depending only on $q$, such that
    \[
 \mathbb{E}\bigl\{\mathrm{H}^2\bigl(\psi^*(\mathbb{P}_n),\psi^*(P)\bigr)\bigr\} \leq C_q' \cdot \sqrt{\frac{M_q}{\epsilon_P}} \cdot \frac{1 + (\log n)\mathbbm{1}_{\{q=2\}}}{n^{\frac{q-1}{2q}}}.
\]
\item \textbf{Lower bound}: There exist universal constants $\epsilon_*, c > 0$ such that
  \[
    \sup_{P \in \mathcal{P}_1: \mu_q(P) \leq 1, \epsilon_P \geq \epsilon_*} \mathbb{E}\bigl\{\mathrm{H}^2\bigl(\psi^*(\mathbb{P}_n),\psi^*(P)\bigr)\bigr\} \geq c \cdot \frac{1}{n^{\frac{q-1}{2q}}}.
  \]
  \end{enumerate}
\end{theorem}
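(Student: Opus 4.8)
The plan is to establish the two bounds independently: the upper bound by combining the quantitative continuity of log-concave projection (Theorem~\ref{Thm:Holder}) with the classical rate of convergence of the empirical distribution in Wasserstein distance, and the lower bound by a two-point construction chosen so that this continuity estimate is essentially tight.

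For the upper bound I would argue as follows. First, $\mathbb{P}_n \in \mathcal{P}_1$ with probability $1-o(1)$ (it can fail only if all the $X_i$ coincide, which has exponentially small probability since $P$ is not a point mass), and on the complement one uses the trivial bound $\mathrm{H}^2 \leq 2$. On the good event, apply Theorem~\ref{Thm:Holder} to the pair $(\mathbb{P}_n,P)$ and bound the denominator below by $\epsilon_P$, to get $\mathrm{H}^2(\psi^*(\mathbb{P}_n),\psi^*(P)) \leq (C_1^\circ)^2\{d_{\mathrm{W}}(\mathbb{P}_n,P)/\epsilon_P\}^{1/2}$; then take expectations and use Jensen's inequality (concavity of $t\mapsto t^{1/2}$) to reduce matters to bounding $\mathbb{E}\{d_{\mathrm{W}}(\mathbb{P}_n,P)\}$. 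In one dimension $d_{\mathrm{W}}(\mathbb{P}_n,P) = \int_{\mathbb{R}}|F_n-F|$, where $F_n,F$ are the distribution functions, so $\mathbb{E}\{d_{\mathrm{W}}(\mathbb{P}_n,P)\} = \int_{\mathbb{R}}\mathbb{E}|F_n(x)-F(x)|\,dx$; here one bounds the integrand by $\min\bigl(2\min(F(x),1-F(x)),\ \{\min(F(x),1-F(x))/n\}^{1/2}\bigr)$ — the first via the triangle inequality and the second via $\mathrm{Var}\,F_n(x)=F(x)(1-F(x))/n$ — controls $\min(F(x),1-F(x))$ by $\min(1/2,M_q^q|x|^{-q})$ using Markov's inequality, splits the integral at $|x|\asymp M_q n^{1/q}$ and optimises. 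This yields $\mathbb{E}\{d_{\mathrm{W}}(\mathbb{P}_n,P)\}\lesssim_q M_q n^{-(q-1)/q}$ for $q\in(1,2)$, with an extra factor $\log n$ at $q=2$, and substituting gives the stated bound. For $q>2$ this route produces only $\mathbb{E}\{d_{\mathrm{W}}(\mathbb{P}_n,P)\}\lesssim n^{-1/2}$, hence $\mathbb{E}\{\mathrm{H}^2\}\lesssim n^{-1/4}$, which is weaker than claimed; recovering the sharp rate there requires additionally a localisation step — using concentration of $d_{\mathrm{W}}(\mathbb{P}_n,P)$ and of $\epsilon_{\mathbb{P}_n}$ to confine $\psi^*(\mathbb{P}_n)$ to a small Hellinger neighbourhood of $\psi^*(P)$, and then exploiting the maximum-likelihood characterisation of $\psi^*(\mathbb{P}_n)$ with an empirical-process bound over log-concave densities (whose Hellinger bracketing entropy grows like $\epsilon^{-1/2}$), the moment assumption $\mu_q(P)\le M_q$ entering precisely to tame the tail contributions. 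I expect this last step, reconciling the heavy tails of $P$ with the standard maximum-likelihood machinery, to be the main technical obstacle in the upper bound.

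For the lower bound, the key observation is that a minimax lower bound over $\{P\in\mathcal{P}_1:\mu_q(P)\le 1,\ \epsilon_P\ge\epsilon_*\}$ is inherited by the particular estimator $\psi^*(\mathbb{P}_n)$, so it suffices to exhibit $P_0,P_1$ in this class with $\mathrm{H}^2(P_0,P_1)\lesssim 1/n$ (so that $\mathrm{TV}(P_0^{\otimes n},P_1^{\otimes n})$ is bounded away from $1$) and $\mathrm{H}^2(\psi^*(P_0),\psi^*(P_1))\gtrsim n^{-(q-1)/(2q)}$; Le Cam's two-point inequality then gives $\max_{j\in\{0,1\}}\mathbb{E}_{P_j}\{\mathrm{H}^2(\psi^*(\mathbb{P}_n),\psi^*(P_j))\}\gtrsim \mathrm{H}^2(\psi^*(P_0),\psi^*(P_1))$, which dominates the supremum. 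I would take $P_0$ and $P_1$ to share a fixed, well-behaved log-concave ``bulk'' and to differ only through a tail perturbation at scale of order $n^{1/q}$ — the largest displacement compatible with the budget $\mu_q\le 1$ — so that $\mathrm{H}^2(P_0,P_1)\asymp 1/n$ while $d_{\mathrm{W}}(P_0,P_1)\asymp n^{-(q-1)/q}$, and then require the log-concave projection to amplify this difference to the full extent that Theorem~\ref{Thm:Holder} allows, namely $\mathrm{H}^2(\psi^*(P_0),\psi^*(P_1))\asymp d_{\mathrm{W}}(P_0,P_1)^{1/2}$. This is exactly the regime in which the exponent $1/4$ in that theorem is unimprovable, so such a pair exists; the delicate part — and the main obstacle in the lower bound — is to shape the perturbation so that the tails of $\psi^*(P_0)$ and $\psi^*(P_1)$ genuinely differ at the required scale rather than being smoothed towards one another, which I would verify by describing these projections in the relevant tail region via the first-order stationarity conditions (Lemma~\ref{Lemma:Basic}) and the envelope bounds of Lemma~\ref{Lemma:Envelope}.
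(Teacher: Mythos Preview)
The paper is a survey and does not prove Theorem~\ref{Thm:MisspecifiedRate}; it quotes the result from \citet{barber2021local} and merely indicates, in the sentence preceding it, that the upper bound is ``facilitated'' by the H\"older continuity of log-concave projection (Theorem~\ref{Thm:Holder}). Your upper-bound strategy is precisely this, and for $q\in(1,2]$ it is complete: combining $\mathrm{H}^2\lesssim (d_{\mathrm{W}}/\epsilon_P)^{1/2}$ with the standard one-dimensional bound $\mathbb{E}\{d_{\mathrm{W}}(\mathbb{P}_n,P)\}\lesssim_q M_q\,n^{-(q-1)/q}$ (with the extra $\log n$ at $q=2$) yields exactly the stated rate, and this is the route taken in the cited source. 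You are also right that for $q>2$ the Wasserstein rate saturates at $n^{-1/2}$, so the direct route gives only $n^{-1/4}$ rather than the claimed $n^{-(q-1)/(2q)}$; the empirical-process localisation you sketch is along the right lines, but that step in the heavy-tailed, misspecified setting is where the substantive work in the original reference lies, and your description of it is still at the level of a plan rather than an argument.

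For the lower bound, your Le Cam two-point strategy and the heuristic construction --- mass $\asymp 1/n$ moved to distance $\asymp n^{1/q}$, so that $\mathrm{H}^2(P_0,P_1)\asymp 1/n$ while $d_{\mathrm{W}}(P_0,P_1)\asymp n^{-(q-1)/q}$, then exploit sharpness of the exponent~$1/4$ --- matches the approach of the cited source. One phrasing to correct: your opening line that ``a minimax lower bound is inherited by the particular estimator $\psi^*(\mathbb{P}_n)$'' is backwards (a minimax lower bound constrains the \emph{best} estimator, not a specific one). What you actually use, and state correctly two lines later, is that Le Cam's two-point inequality lower-bounds the risk of \emph{every} estimator, $\psi^*(\mathbb{P}_n)$ included. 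The genuinely delicate step, as you note, is showing that $\psi^*(P_0)$ and $\psi^*(P_1)$ really do separate in Hellinger at the required scale rather than the projection smoothing out the tail perturbation; in the original this is done by computing the projections explicitly for a concrete pair rather than via the first-order conditions alone.
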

The lower bound in Theorem~\ref{Thm:MisspecifiedRate} confirms that the dependence on $n$ in the upper bound on the worst-case performance of the log-concave MLE is sharp, except possibly for the additional logarithmic factor in the special case $q=2$.  Nevertheless, it is natural to ask whether this rate can be improved in the correctly specified case.  We write $\tilde{\mathcal{F}}_n$ for the set of all Borel measurable functions from $(\mathbb{R}^d)^{\times n}$ to the set of integrable functions on $\mathbb{R}^d$. 
\begin{theorem}[\citealp{kim2016global,kur2019optimality}]
  \label{Thm:FdRate}
  Let $d \in \mathbb{N}$, $n \geq d+1$, and let $X_1,\ldots,X_n \stackrel{\mathrm{iid}}{\sim} f_0 \in \mathcal{F}_d$, with empirical distribution $\mathbb{P}_n$.
  \begin{enumerate}[\itshape(a)]
\item \textbf{Upper bound}: Writing $\hat{f}_n := \psi^*(\mathbb{P}_n)$, there exists $C_d^* > 0$, depending only on $d$, such that
  \[
   \sup_{f_0 \in \mathcal{F}_d} E_{f_0}\bigl\{\mathrm{H}^2(\hat{f}_n,f_0)\bigr\} \leq C_d^* \cdot \left\{ \begin{array}{ll} n^{-4/5} & \mbox{if $d=1$} \\
n^{-2/(d+1)} \log n & \mbox{if $d \geq 2$.} \end{array} \right.
  \]
\item \textbf{Minimax lower bound}: There exists a universal constant $c_* > 0$ such that
\[
\inf_{\tilde{f}_n \in \tilde{\mathcal{F}}_n} \sup_{f_0 \in \mathcal{F}_d} E_{f_0}\bigl\{\mathrm{H}^2(\tilde{f}_n,f_0)\bigr\} \geq c_* \cdot \left\{ \begin{array}{ll} n^{-4/5} & \mbox{if $d=1$} \\ 
n^{-2/(d+1)} & \mbox{if $d \geq 2$.} \end{array} \right.
\]
 \end{enumerate}  
\end{theorem}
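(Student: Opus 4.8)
The plan is to establish the upper and lower bounds by completely separate arguments, in each case reducing to the well-understood behaviour of concave functions and of convex bodies.

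\emph{Upper bound.} First I would use affine equivariance (Lemma~\ref{Lemma:Affine}) to assume without loss of generality that $f_0$ is isotropic, so that the integrable envelope of Lemma~\ref{Lemma:Envelope} applies. A preliminary localisation shows that, with probability at least $1 - n^{-K}$ for any fixed $K > 0$, both $\hat{f}_n$ and $f_0$ are supported in a ball of radius $O_K(\log n)$ and bounded above by a fixed constant; after a standard further truncation of the region where the density is small, it then suffices to bound the bracketing entropy of the class $\bar{\mathcal{F}}_d$ of log-concave densities on a fixed compact convex set with fixed positive upper and lower bounds, at the cost of only logarithmic factors. The engine is then the classical empirical-process analysis of the MLE: the basic inequality $\frac{1}{n}\sum_{i=1}^n \log\{\hat{f}_n(X_i)/f_0(X_i)\} \geq 0$, combined with the square-root trick and a peeling argument, converts a bound on the (local) bracketing entropy of $\bar{\mathcal{F}}_d$ in Hellinger distance into the in-probability bound $\mathrm{H}^2(\hat{f}_n, f_0) \lesssim \delta_n^2$, where $\delta_n$ solves $\int_{\delta^2}^{\delta} \{\log N_{[\,]}(u, \bar{\mathcal{F}}_d, \mathrm{H})\}^{1/2}\, du \asymp \sqrt{n}\, \delta^2$. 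The key analytic input is that $\log N_{[\,]}(u, \bar{\mathcal{F}}_d, \mathrm{H}) \lesssim u^{-1/2}$ when $d = 1$ and $\lesssim u^{-(d-1)}$ when $d \geq 2$: the former is the classical entropy bound for concave functions on an interval, and the latter follows, via the uniform densities on convex bodies (whose boundaries form $(d-1)$-dimensional surfaces), from Bronshtein's metric entropy bound for convex bodies. For $d \leq 3$ this exponent is at most $2$, so the entropy integral converges (borderline at $d=3$), and solving the displayed equation yields $\delta_n^2 \asymp n^{-4/5}$ for $d=1$, $\asymp n^{-2/3}$ for $d=2$ and $\asymp n^{-1/2}\log n$ for $d=3$, matching $n^{-2/(d+1)}$ up to the log. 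Finally one passes from the in-probability bound to the bound in expectation by integrating a tail bound for $\mathrm{H}^2(\hat{f}_n, f_0)$, using the integrability of the envelope in Lemma~\ref{Lemma:Envelope} to absorb the contribution of the rare event on which the localisation fails.

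\emph{Lower bound.} I would invoke Assouad's lemma with a family $\{f_\tau : \tau \in \{0,1\}^N\}$ obtained by local perturbation, using two constructions. For $d \leq 2$: fix a smooth, strongly log-concave base density $f_0$ (a Gaussian, say) with $\nabla^2 \log f_0 \preceq -c I$ on a cube $Q$ in its support, partition $Q$ into $N \asymp h^{-d}$ sub-cubes of side $h$, and put $\log f_\tau := \log f_0 + \sum_k (2\tau_k - 1)\psi_k$ (renormalised), where each $\psi_k$ is a fixed mean-zero bump supported in sub-cube $k$ with $\|\nabla^2\psi_k\|_{\mathrm{op}} \leq c/2$ --- a constraint that forces $\|\psi_k\|_\infty \asymp h^2$ and hence $\|\psi_k\|_{2}^2 \asymp h^{d+4}$. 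Then $\log f_\tau$ is concave (and coercive), so $f_\tau \in \mathcal{F}_d$; neighbouring members satisfy $\mathrm{H}^2(f_\tau, f_{\tau'}) \asymp \mathrm{KL}(f_\tau, f_{\tau'}) \asymp h^{d+4}$; and taking $h \asymp n^{-1/(d+4)}$ so that $n h^{d+4} \asymp 1$, Assouad's lemma gives a minimax lower bound of order $N h^{d+4} \asymp h^4 \asymp n^{-4/(d+4)}$, which equals $n^{-4/5}$ when $d=1$ and $n^{-2/3} = n^{-2/(d+1)}$ when $d=2$. For $d \geq 3$ the bump construction is suboptimal, and instead I would perturb the boundary of a convex body: take $K_\tau \subseteq \mathbb{R}^d$ sandwiched between two concentric balls, with the boundary indented on $N \asymp \theta^{-(d-1)}$ disjoint spherical caps of angular radius $\theta$, each indentation of depth $\asymp \theta^2$, the quadratic scaling being exactly what preserves convexity of $K_\tau$; let $f_\tau$ be the uniform density on $K_\tau$, which is log-concave. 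Neighbouring members have symmetric-difference volume $\asymp \theta^{d+1}$, so $\mathrm{H}^2(f_\tau, f_{\tau'}) \asymp \theta^{d+1}$, and taking $\theta \asymp n^{-1/(d+1)}$, Assouad's lemma --- applied via Hellinger affinities, so that no finiteness of KL is needed --- yields a minimax lower bound of order $N\theta^{d+1} \asymp \theta^2 \asymp n^{-2/(d+1)}$.

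\emph{Main obstacle.} The genuinely difficult part is the upper bound when $d \geq 4$. There the bracketing-entropy exponent $d-1$ exceeds $2$, the entropy integral is supercritical, and the generic $M$-estimation argument delivers only the suboptimal rate $n^{-1/(d-1)}$. Establishing the optimal $n^{-2/(d+1)}$ rate for the MLE therefore requires going beyond global bracketing entropy --- exploiting that $\hat{f}_n$ is a genuine maximum-likelihood projection and that log-concave densities within Hellinger distance $\delta$ of $f_0$ cannot be too irregular, via a sharper localised complexity bound --- which is the contribution of \citet{kur2019optimality}. It is worth noting that the general-purpose Hölder bound of Theorem~\ref{Thm:Holder} is of no use here: since $\mathbb{E}\,d_{\mathrm{W}}(\mathbb{P}_n, P)$ is of order $n^{-1/2}$ already in one dimension, it yields only $\mathbb{E}\,\mathrm{H}^2(\hat{f}_n, f_0) \lesssim n^{-1/4}$, and the much faster correctly-specified rate genuinely depends on the entropy-based argument above.
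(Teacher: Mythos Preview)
The paper does not prove this theorem; as a survey, it states the result with attribution to \citet{kim2016global} and \citet{kur2019optimality} and follows it only with commentary, so there is no in-paper proof to compare against. Your outline is a faithful summary of the original arguments: the upper bound for $d\le 3$ via affine reduction to the isotropic case, localisation and bracketing entropy is precisely the Kim--Samworth strategy; you correctly identify that for $d\ge 4$ the entropy integral is supercritical, the generic $M$-estimation argument gives only $n^{-1/(d-1)}$, and the optimal rate requires the refined analysis of \citet{kur2019optimality}; and your two lower-bound constructions (smooth log-density bumps for $d\le 2$, convex-body boundary perturbations for $d\ge 3$, the latter run with Hellinger affinities to avoid infinite KL) are the standard ones, consistent with the paper's own remark that the $d\ge 2$ lower bound already holds over uniform densities on convex bodies.

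One imprecision is worth flagging. You justify the bracketing-entropy exponent $u^{-(d-1)}$ for $d\ge 2$ by saying it ``follows, via the uniform densities on convex bodies\ldots, from Bronshtein's metric entropy bound''. But uniform densities on convex bodies form a \emph{subclass} of $\bar{\mathcal{F}}_d$, so Bronshtein gives only a \emph{lower} bound on the entropy of the full class; the MLE rate argument needs an \emph{upper} bound on the bracketing entropy. Establishing that upper bound --- simultaneously controlling the convex support and the concave log-density on it, and showing that the boundary contribution dominates for $d\ge 2$ --- is in fact one of the substantial technical contributions of \citet{kim2016global} and does not reduce to Bronshtein's theorem alone. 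The exponent you quote does deliver the right rates for $d=2,3$, but the sentence as written does not establish it.
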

Theorem~\ref{Thm:FdRate} reveals that, in the case of correct model specification, the log-concave MLE attains the minimax optimal rate of convergence in squared Hellinger distance, up to the logarithmic factor when $d \geq 2$.  Nevertheless, the curse of dimensionality effect that is apparent in this result, combined with the computational challenges in higher dimensions, mean that one should regard the log-concave MLE as a low-dimensional estimator; see \citet{samworth2012independent}, \citet{xu2021high} and \citet{kubal2025log} for extensions to higher dimensions.  The phase transition at $d=2$ is surprising: since log-concave densities are twice differentiable Lebesgue almost everywhere, it had been expected that the rate would be the usual rate for estimating densities of smoothness $\beta=2$, namely $n^{-4/(d+4)}$.  However, log-concave densities can be badly behaved (discontinuous) on the boundary of their support, and it turns out that it is the difficulty of estimating this support that drives the rate in higher dimensions; in particular, the same minimax lower bound of order $n^{-2/(d+1)}$ holds when $d \geq 2$ if we restrict the supremum to the class of uniform densities on convex, compact sets. 

\subsection{Adaptation}

Although Theorems~\ref{Thm:MisspecifiedRate} and~\ref{Thm:FdRate} provide strong guarantees on the worst-case performance of the log-concave MLE, they ignore one of the appealing features of the estimator, namely its potential to adapt to certain characteristics of the unknown true density.  Here is one such result in the case $d=1$.  For $\beta \in (1,2]$ and $L > 0$, the H\"older class $\mathcal{H}(\beta,L)$ on an interval $I$ is the set of differentiable functions $\phi:I \rightarrow \mathbb{R}$ with
\[
|\phi'(x) - \phi'(x')| \leq L|x-x'|
\]
for $x,x' \in I$.  We also write $\phi \in \mathcal{H}(1,L)$ on $I$ if $\phi$ is $L$-Lipschitz on $I$.

\begin{theorem}[\citealp{dumbgen2009maximum}]
Let $X_1,\ldots,X_n \stackrel{\mathrm{iid}}{\sim} f_0 \in \mathcal{F}_1$, and assume that $\phi_0 := \log f_0 \in \mathcal{H}(\beta,L)$ on $I$ for some $\beta \in [1,2]$, $L > 0$ and compact interval $I \subseteq \mathrm{int} \, \mathrm{dom} (\phi_0)$.  Then
\[
\sup_{x_0 \in I} |\hat{f}_n(x_0) - f_0(x_0)| = O_p\biggl(\Bigl(\frac{\log n}{n}\Bigr)^{\beta/(2\beta+1)}\biggr).
\]
\end{theorem}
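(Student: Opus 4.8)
The plan is to follow the standard route to pointwise rates for shape-constrained maximum likelihood estimators, balancing a deterministic approximation error against a stochastic fluctuation at a carefully chosen window scale. Since $\hat\phi_n := \log\hat f_n$ is affine between its kinks, near a point $x_0$ one is effectively fitting a log-affine density to the $\asymp nh$ observations falling in a window of width $h$: the error of approximating $\phi_0 \in \mathcal{H}(\beta,L)$ by an affine function over such a window is of order $Lh^\beta$, while the stochastic error of the (two-parameter) log-affine fit is of order $(nh)^{-1/2}$, inflated by $(\log n)^{1/2}$ once one demands that the bound hold uniformly over the $\asymp h^{-1}$ windows needed to cover $I$. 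Equating $h^\beta$ with $\{\log n/(nh)\}^{1/2}$ yields the scale $h_n := (\log n/n)^{1/(2\beta+1)}$ and the target rate $h_n^\beta$. Before implementing this, I would reduce to bounding $\hat\phi_n$: on the compact $I \subseteq \mathrm{int}\,\mathrm{dom}(\phi_0)$ the density $f_0$ is bounded away from $0$ and $\infty$, and uniform consistency of the log-concave MLE on compact subintervals of $\mathrm{int}\,\mathrm{dom}(\phi_0)$ — a known fact, provable in the spirit of Corollary~\ref{cor:Robust} — ensures that, with probability tending to one, $\sup_{x\in I'}|\hat\phi_n(x) - \phi_0(x)|$ is small on a slightly larger compact $I' \subseteq \mathrm{int}\,\mathrm{dom}(\phi_0)$ and $I \subseteq \mathrm{int}\,\supp \hat f_n$. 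Since $\phi_0$ is then bounded on the compact $I$, so is $\hat\phi_n$, and $|\hat f_n(x_0) - f_0(x_0)| \asymp |\hat\phi_n(x_0) - \phi_0(x_0)|$ for $x_0 \in I$; it therefore suffices to control $\sup_{x_0\in I}|\hat\phi_n(x_0) - \phi_0(x_0)|$.

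The next ingredient is the first-order characterisation of $\hat f_n$. Writing $\mathbb{F}_n$ for the empirical distribution function and $\hat F_n$ for the distribution function of $\hat f_n = \psi^*(\mathbb{P}_n)$, and applying the stationarity inequality of Lemma~\ref{Lemma:Basic} with $P = \mathbb{P}_n$ and the concave coercive perturbations $\Delta = -(\cdot - s)_+$ and $\Delta = -(s - \cdot)_+$ (admissible for every $s$), with $\Delta = +(\cdot - s)_+$ when $s$ is a kink of $\hat\phi_n$, and with $\Delta(x) = \pm x$ together with Corollary~\ref{Cor:Moment} to see that first moments match, one obtains
\[
\int_{-\infty}^s (\hat F_n - \mathbb{F}_n) \le 0 \ \text{ for all } s, \qquad \int_{-\infty}^\infty (\hat F_n - \mathbb{F}_n) = 0,
\]
with $\int_{-\infty}^s(\hat F_n - \mathbb{F}_n) = 0$ at every kink $s$ of $\hat\phi_n$ and at the endpoints of $\supp\hat f_n$. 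In particular, on any inter-kink interval $[\tau,\tau']$ one has $\int_\tau^{\tau'}(\hat F_n - \mathbb{F}_n) = 0$ with $\int_\tau^s(\hat F_n - \mathbb{F}_n) \le 0$ throughout, while $\hat\phi_n$ is affine there.

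Now fix $x_0 \in I$, write $\hat F_n - \mathbb{F}_n = (\hat F_n - F_0) - (\mathbb{F}_n - F_0)$, and work on a further good event on which $\mathbb{F}_n - F_0$ obeys the standard local oscillation bound $\sup\{|(\mathbb{F}_n - F_0)(b) - (\mathbb{F}_n - F_0)(a)| : [a,b] \subseteq I',\, b - a \le 2h_n\} \le C\{h_n\log n/n\}^{1/2}$, obtained from a maximal inequality for the empirical process (this is where $\log n$ enters). Suppose $\hat\phi_n(x_0) - \phi_0(x_0) = \delta$ with $|\delta| \gg h_n^\beta$, and let $[\tau^-,\tau^+]$ be the inter-kink interval of $\hat\phi_n$ containing $x_0$. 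Using concavity of $\hat\phi_n$ (so its affine pieces on $I'$ have uniformly bounded slopes) together with the $\mathcal{H}(\beta,L)$ smoothness of $\phi_0$, one first shows that the kinks bracketing $x_0$ must be within $\asymp h_n$ of $x_0$: a long affine stretch of $\hat\phi_n$ would drift from the curved $\phi_0$ by an amount incompatible with $\int_{\tau^-}^{\tau^+}(\hat F_n - \mathbb{F}_n) = 0$ and $\int_{\tau^-}^s(\hat F_n - \mathbb{F}_n) \le 0$ once the empirical fluctuation is subtracted, and if $x_0$ instead lies in a run of very short inter-kink intervals, one applies the analogous estimate across the whole run, using the vanishing of $s \mapsto \int_{-\infty}^s(\hat F_n - \mathbb{F}_n)$ at every intervening kink. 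Given $\tau^+ - \tau^- \lesssim h_n$, integrating by parts the identity $\int_{\tau^-}^{\tau^+}(\hat F_n - \mathbb{F}_n) = 0$ and the companion first-moment identities (from $\int_{\tau^-}^{\tau^+}(s - \tau^\pm)\,d(\hat F_n - \mathbb{F}_n)$) yields integrated constraints on $\hat f_n - f_0$ over $[\tau^-,\tau^+]$; since $\hat f_n$ is log-affine there and $\phi_0$ is within $\asymp Lh_n^\beta$ of affine, these force $\hat\phi_n$ to agree with $\phi_0$ on $[\tau^-,\tau^+]$ up to $O(Lh_n^\beta) + O(\{h_n\log n/n\}^{1/2}) = O(h_n^\beta)$, in particular at $x_0$, contradicting $|\delta| \gg h_n^\beta$. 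A union bound over an $h_n$-net of $I$, with interpolation using that $\hat\phi_n$ (concave, bounded on $I'$) is uniformly Lipschitz on $I$ and $\phi_0$ is Hölder, then upgrades this to the claimed uniform bound.

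The step I expect to be the main obstacle is the deterministic local-modulus argument in the previous paragraph: the kinks of $\hat\phi_n$ are random and not directly controllable, so ruling out a pointwise deviation larger than $h_n^\beta$ requires exploiting the characterisation simultaneously at all the kinks in a neighbourhood of $x_0$, and showing that neither a single overlong affine stretch nor a dense cluster of short ones can conspire to produce such a deviation. Extracting precisely the right power of the interval length from the $\mathcal{H}(\beta,L)$ condition, uniformly over all $\beta \in [1,2]$ and all positions of the window within $I$, and then bookkeeping the resulting bias and fluctuation terms so that they balance exactly at $h_n = (\log n/n)^{1/(2\beta+1)}$, is where the genuine technical work lies.
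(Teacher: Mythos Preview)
The paper is a survey and does not prove this theorem; it is stated with attribution to \citet{dumbgen2009maximum} and followed only by a one-sentence interpretation, with no argument given. There is therefore no proof in the paper against which to compare your sketch.

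For what it is worth, your outline tracks the strategy of the original reference: the first-order characterisation you derive from Lemma~\ref{Lemma:Basic} (the integrated process $s \mapsto \int_{-\infty}^s(\hat F_n - \mathbb{F}_n)$ being nonpositive with equality at the kinks of $\hat\phi_n$), localisation to windows of width $h_n = (\log n/n)^{1/(2\beta+1)}$, and the bias--variance balance $Lh_n^\beta \asymp \{(\log n)/(nh_n)\}^{1/2}$ are precisely the ingredients used there. Your identification of the delicate step --- turning the integral characterisation into two-sided pointwise control of $\hat\phi_n - \phi_0$ uniformly over $I$, when the kinks are random and one must handle both a single long affine stretch and a cluster of short ones --- is accurate; in \citet{dumbgen2009maximum} this is carried out not by a contradiction argument but by deriving explicit sandwich inequalities for $\hat\phi_n(x_0)$ in terms of $\phi_0$ at nearby points plus remainders, with the upper and lower deviations handled separately because concavity makes them asymmetric.
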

Here the log-concave MLE is adapting to unknown smoothness, in the sense that the upper bound on the rate improves with greater smoothness, even though the definition of the MLE does not depend on the unknown $\beta$.  
Other adaptation results are motivated by the thought that since the log-density of the MLE is piecewise affine, we might hope for faster rates of convergence in cases where $\log f_0$ is made up of a relatively small number of affine pieces.  We now describe two such results.  The first is based on a log-concave Marshall's lemma:
\begin{lemma}[\citealp{kim2018adaptation}]
\label{Lemma:LogConcaveMarshall}
Let $n \geq 2$, let $X_1,\ldots,X_n$ be real numbers that are not all equal, with empirical distribution function $\mathbb{F}_n$, let $\hat{F}_n(x) := \int_{-\infty}^x \hat{f}_n(t) \, dt$ for $x \in \mathbb{R}$, and let $F_0$ denote any distribution function whose corresponding density is concave on its support.  Then
\[
\|\hat{F}_n - F_0\|_\infty \leq 2\|\mathbb{F}_n - F_0\|_\infty.
\]
\end{lemma}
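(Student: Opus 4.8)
The plan is to reduce the statement to a one-sided bound, to use the first-order optimality conditions for $\hat f_n$ in ``twice-integrated'' form, and then to push a knot-level estimate into the interior of each piece on which $\log\hat f_n$ is affine. Write $d := \|\mathbb F_n - F_0\|_\infty$, which is positive since a step function cannot coincide with a continuous one. By affine equivariance of log-concave projection (Lemma~\ref{Lemma:Affine}), running the argument for the reflected data $-X_1,\dots,-X_n$ and the reflected target $x \mapsto 1 - F_0(-x)$ (whose density $x\mapsto f_0(-x)$ is again concave on its support) converts the lower bound $\hat F_n \ge F_0 - 2d$ into the upper bound, so it suffices to show $\hat F_n(x) \le F_0(x) + 2d$ for all $x$ and every admissible $F_0$. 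Now $\hat\phi_n := \log\hat f_n$ is concave and piecewise affine with knots among $X_1,\dots,X_n$, and $\hat f_n$ is supported on $[X_{(1)},X_{(n)}]$ by Theorem~\ref{Thm:LCProjection}(c). With $\hat H_n(x) := \int_{-\infty}^x \hat F_n$ and $H_n(x) := \int_{-\infty}^x \mathbb F_n$, I would apply Lemma~\ref{Lemma:Basic} to $P = \mathbb P_n$ (which lies in $\mathcal P_1$ because the $X_i$ are not all equal) along the admissible concave directions $\Delta(t) = -(x-t)_+$ and $\Delta(t) = -(t-x)_+$, and combine this with the identities $\int (x-t)_+\,dQ(t) = \int_{-\infty}^x Q$ and $\int (t-x)_+\,dQ(t) = \int_x^\infty(1-Q)$ to obtain $\hat H_n \le H_n$ everywhere; perturbing additionally by the convex functions $\pm(t-\cdot)_+$ at a knot $t$ of $\hat\phi_n$ (for which $\hat\phi_n + s\Delta \in \Phi$ for small $s > 0$, the concave kink at $t$ absorbing a little convexity) forces equality there, so $G := \hat H_n - H_n \le 0$ vanishes at every knot of $\hat\phi_n$ and at $X_{(1)},X_{(n)}$.

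\textbf{From the knots into the pieces.} Since $G \le 0$ and $G(t) = 0$ at a knot $t$, that point maximises $G$, so $G'(t^-) = \hat F_n(t) - \mathbb F_n(t^-) \ge 0$ and $G'(t^+) = \hat F_n(t) - \mathbb F_n(t) \le 0$; hence $\hat F_n(t)$ lies between the one-sided limits of $\mathbb F_n$ at $t$, both of which are within $d$ of $F_0(t)$ by definition of $d$ and continuity of $F_0$, giving $|\hat F_n(t) - F_0(t)| \le d$ at every knot and at $X_{(1)},X_{(n)}$. On an interval $[t_{j-1},t_j]$ between consecutive knots, $\hat F_n$ and $F_0$ are both nondecreasing, $\hat F_n$ is of a single convexity type there (as $\hat\phi_n$ is affine and $\hat f_n$ therefore log-affine), $F_0$ changes convexity at most once (at the mode of $f_0$), the endpoint values of $\hat F_n$ and $F_0$ agree to within $d$ by the previous step, and $\int_{t_{j-1}}^x(\hat F_n - \mathbb F_n) = G(x) \le 0$ for $x \in [t_{j-1},t_j]$ with equality at $t_j$. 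Combining chord/tangent inequalities for the one-sidedly convex $\hat F_n$ and for $F_0$ with this integrated constraint and $|\mathbb F_n - F_0| \le d$ to handle the different configurations of convexity type, one extends the estimate to $\hat F_n(x) \le F_0(x) + 2d$ on the whole piece; the extra factor $2$ (versus the factor $1$ achieved at the knots) is the slack forced by the worst-case convexity configuration of $\hat F_n$ and $F_0$ on a piece.

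\textbf{Main obstacle.} I expect this last step to be the crux. At an interior point of an affine piece, $\hat F_n - F_0$ is controlled by $\hat F_n - \mathbb F_n$ only in the integrated sense $\hat H_n \le H_n$, so the monotonicity of $\hat F_n$ and $F_0$, the one-sided convexity of each on the piece, the endpoint estimates, and the integrated inequality all have to be interlocked, with no single ingredient sufficing; moreover it is precisely the pairing of the \emph{log}-concave shape of $\hat f_n$ with the (stronger) \emph{concave} shape of $f_0$ — a distribution function with concave density is not merely log-concave — that keeps any bulge of $\hat F_n$ above $F_0$ below $2d$. The remaining bookkeeping — ties among the $X_i$, the relative positions of $X_{(1)},X_{(n)}$ and the support endpoints of $f_0$, and degenerate pieces where $\hat\phi_n$ is constant — is routine.
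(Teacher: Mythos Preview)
The paper does not supply its own proof of this lemma; it is stated with attribution to \citet{kim2018adaptation} and then invoked in the proof of Theorem~\ref{Thm:Adap1}.  So there is no in-paper argument to compare against directly.

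Judged on its own terms, your scaffold is the right one and matches the argument in the cited source.  The reduction to a one-sided bound by reflection via Lemma~\ref{Lemma:Affine}, the derivation of $\hat H_n \le H_n$ with equality at knots from Lemma~\ref{Lemma:Basic} using the perturbations $\Delta = \pm(t-\cdot)_+$, and the consequence $|\hat F_n(t)-F_0(t)| \le d$ at every knot are all correct and cleanly set up.

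Where the proposal falls short is precisely where you say it does.  Your final step is only a list of ingredients (``chord/tangent inequalities \ldots integrated constraint \ldots different configurations''), and these do not combine as easily as the wording suggests.  For instance, on a piece $[a,b]$ where both $\hat F_n$ and $F_0$ happen to be convex, the chord bound gives $\hat F_n(x) \le \mathrm{chord}_{F_0}(x) + d$, but convexity of $F_0$ places $F_0(x)$ \emph{below} its chord, so this does not yield $\hat F_n(x) \le F_0(x) + 2d$ without further input; and in a concave/convex mismatch, a naive averaging argument via the integrated inequality produces a constant $3$ rather than $2$.  The structural fact you are not exploiting is that on each piece $\hat f_n$ is log-affine and hence \emph{convex}, so $\hat f_n - f_0$ is convex on the intersection with $\supp f_0$; this forces $(\hat F_n - F_0)'$ to have sublevel sets that are intervals, so $\hat F_n - F_0$ has at most one interior local maximum on the piece.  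That rigidity --- which uses the concavity of $f_0$ in a sharper way than ``$F_0$ changes convexity at most once'' --- is what allows the endpoint control and the integrated inequality to close the case analysis with constant $2$, with the extra unit coming from configurations where $\supp f_0$ does not cover the whole piece.  Until that step is actually written out, the proof is incomplete at its crux, though the approach is sound.
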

Now let $\mathcal{F}_{\mathrm{unif}}$ denote the class of uniform densities on a closed interval.
\begin{theorem}[\citealp{kim2018adaptation}]
\label{Thm:Adap1}
Let $n \geq 2$.  We have
\[
\sup_{f_0 \in \mathcal{F}_{\mathrm{unif}}} E_{f_0} \mathrm{TV}(\hat{f}_n,f_0) \leq \frac{4}{n^{1/2}}.
\]
\end{theorem}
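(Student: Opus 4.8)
The plan is to normalise to the standard uniform distribution and then chain together the log-concave Marshall's lemma (Lemma~\ref{Lemma:LogConcaveMarshall}), the unimodality of $\hat f_n$, and a classical bound on the expected Kolmogorov--Smirnov distance. First I would exploit affine equivariance. Suppose $f_0$ is the uniform density on a non-degenerate closed interval $[\alpha,\beta]$ and set $T(x) := (x-\alpha)/(\beta-\alpha)$. Then $T(X_1),\ldots,T(X_n)$ are independent and uniformly distributed on $[0,1]$; since $f_0$ is continuous these are almost surely distinct, so the relevant empirical measures lie in $\mathcal P_1$ and, by Lemma~\ref{Lemma:Affine}, the log-concave MLE based on the transformed data is the pushforward of $\hat f_n$ under $T$. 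Because total variation distance is invariant under invertible affine maps, $E_{f_0}\mathrm{TV}(\hat f_n,f_0)$ equals the corresponding quantity for $f_0 = \mathbbm{1}_{[0,1]}$, so I may assume $f_0 = \mathbbm{1}_{[0,1]}$ with distribution function $F_0(x) = \min\{\max(x,0),1\}$. A key consequence of this normalisation is that the data lie in $[0,1]$, so by the support statement in Theorem~\ref{Thm:LCProjection} the estimator $\hat f_n$ is supported on the convex hull $C_n$ of the sample, which is a subinterval of $[0,1]$; in particular $\hat f_n$ vanishes off $[0,1]$.

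Next I would turn the total variation distance into a statement about distribution functions. Since $\hat f_n$ is log-concave, the set $\{\hat f_n > 1\}$ is convex, hence an interval $(a,b)$, and it is automatically contained in $[0,1]$ since $\hat f_n$ vanishes outside $C_n\subseteq[0,1]$. As $f_0$ equals $1$ on $[0,1]$ and $0$ off it, the set on which $\hat f_n > f_0$ is precisely $(a,b)$; if this is empty then $\hat f_n = \mathbbm{1}_{[0,1]}$ almost everywhere and $\mathrm{TV}(\hat f_n,f_0) = 0$. Otherwise, writing $\hat F_n$ for the distribution function of $\hat f_n$ and $e := \hat F_n - F_0$, the optimal set in the definition of total variation is $(a,b)$, so
\[
\mathrm{TV}(\hat f_n,f_0) = \int_{(a,b)} (\hat f_n - 1) = \bigl(\hat F_n(b) - F_0(b)\bigr) - \bigl(\hat F_n(a) - F_0(a)\bigr) = e(b) - e(a) \le 2\|e\|_\infty,
\]
using $|e(a)|,|e(b)| \le \|e\|_\infty$. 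Lemma~\ref{Lemma:LogConcaveMarshall} now gives $\|e\|_\infty = \|\hat F_n - F_0\|_\infty \le 2\|\mathbb F_n - F_0\|_\infty$, whence $\mathrm{TV}(\hat f_n,f_0) \le 4\|\mathbb F_n - F_0\|_\infty$ on the probability-one event that $\hat f_n$ is well-defined.

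The last step is to take expectations and bound $E_{f_0}\|\mathbb F_n - F_0\|_\infty$. Since $F_0$ is continuous, this equals the expected supremum of a uniform empirical process, which depends only on $n$; by the Dvoretzky--Kiefer--Wolfowitz inequality with Massart's constant together with a one-line integration, it is at most $n^{-1/2}$. Combining with the previous paragraph gives $E_{f_0}\mathrm{TV}(\hat f_n,f_0) \le 4 n^{-1/2}$, uniformly over $\mathcal F_{\mathrm{unif}}$, as required. The proof is short once Lemma~\ref{Lemma:LogConcaveMarshall} is available, and there is no serious analytic obstacle; the two things that actually need care are the routine checks that the affine reduction is legitimate (so that $\hat f_n$ is well-defined a.s.\ and Lemma~\ref{Lemma:Affine} applies) and, more pertinently, verifying that the bound on $E_{f_0}\|\mathbb F_n - F_0\|_\infty$ genuinely has constant at most $1$ rather than merely $O(1)$ — this is precisely where the sharp constant $4$ in the statement is consumed.
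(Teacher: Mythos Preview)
Your proposal is correct and follows essentially the same route as the paper: both reduce $\mathrm{TV}(\hat f_n,f_0)$ to $4\|\mathbb F_n - F_0\|_\infty$ via the observation that $\{\hat f_n > f_0\}$ is an interval together with the log-concave Marshall lemma (Lemma~\ref{Lemma:LogConcaveMarshall}), and then finish with the DKW--Massart inequality. The only cosmetic differences are that the paper does not bother with the explicit affine reduction to $U[0,1]$, and it integrates the tail bound for $\mathrm{TV}$ directly rather than first bounding $E\|\mathbb F_n - F_0\|_\infty$ and multiplying by~$4$.
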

\begin{proof}
The form of $f_0$ means that $\{x:\hat{f}_n(x) \geq f_0(x)\} = \{x:\log \hat{f}_n(x) \geq \log f_0(x)\}$ is an interval.  Hence
\begin{align*}
\mathrm{TV}(\hat{f}_n,f_0) &= \int_{x:\hat{f}_n(x) \geq f_0(x)\}} \bigl\{\hat{f}_n(x) - f_0(x)\bigr\} \, dx = \sup_{s \leq t} \int_s^t \bigl\{\hat{f}_n(x) - f_0(x)\bigr\} \, dx \nonumber \\
&= \sup_{s \leq t} \bigl\{\hat{F}_n(t) - \hat{F}_n(s) - F_0(t) + F_0(s)\bigr\} \leq 2\|\hat{F}_n - F_0\|_\infty \leq 4\|\mathbb{F}_n - F_0\|_\infty,
\end{align*}
by Lemma~\ref{Lemma:LogConcaveMarshall}.  It follows by the Dvoretsky--Kiefer--Wolfowitz--Massart--Reeve inequality \citep{dvoretzky1956asymptotic,massart1990tight,reeve2024short} that with $s^* := \bigl(\frac{8 \log 2}{n}\bigr)^{1/2}$,
\begin{align*}
E_{f_0} \mathrm{TV}(\hat{f}_n,f_0) &= \int_0^1 P_{f_0}\bigl(\mathrm{TV}(\hat{f}_n,f_0) \geq s\bigr) \, ds \leq s^* + 2\int_{s^*}^\infty e^{-ns^2/8} \, ds \leq \frac{4}{n^{1/2}},
\end{align*}
as required.
\end{proof}
\sloppy Using a more general version of Marshall's lemma than that stated in Lemma~\ref{Lemma:LogConcaveMarshall}, and observing that a concave function can cross a linear function at most twice, Theorem~\ref{Thm:Adap1} can be extended to cases where $f_0$ is log-linear on its support.  The leading constant $4$ in the previous bound deteriorates, however, as the slope of the log-linear density increases, and may need to be replaced with $6 \log n$ in the worst case.  

Generalising these ideas, for $k \in \mathbb{N}$ we define $\mathcal{F}^k$\index{log-$k$-affine log-concave densities} to be the class of log-concave densities $f$ on $\mathbb{R}$ for which $\log f$ is $k$-affine in the sense that there exist intervals $I_1,\ldots,I_k$ such that $f$ is supported on $I_1 \cup \cdots \cup I_k$, and $\log f$ is affine on each~$I_j$.  It will be convenient to define an empirical Kullback--Leibler loss\index{empirical Kullback--Leibler loss} for the log-concave MLE by
\[
  \widehat{\mathrm{KL}}(\hat{f}_n,f_0) := \frac{1}{n}\sum_{i=1}^n \log \frac{\hat{f}_n(X_i)}{f_0(X_i)}.
\]
Note here that the log-ratio of the estimator and the true density is averaged with respect to the empirical distribution, instead of with respect to $\hat{f}_n$.  For general densities $f$ and $g$, this does not make much sense as a loss function, because it would not be guaranteed to be non-negative.  However, an application of Lemma~\ref{Lemma:Basic} to the function $\Delta = \log (f_0/\hat{f}_n)$ yields that
\[
\mathrm{KL}(\hat{f}_n,f_0) \leq \widehat{\mathrm{KL}}(\hat{f}_n,f_0).
\]
In particular, an upper bound on $\widehat{\mathrm{KL}}(\hat{f}_n,f_0)$ immediately provides corresponding bounds on $\mathrm{TV}^2(\hat{f}_n,f_0)$, $\mathrm{H}^2(\hat{f}_n,f_0)$ and $\mathrm{KL}(\hat{f}_n,f_0)$.
\begin{theorem}[\citealp{kim2018adaptation}]
  \label{Thm:Adapt2}
Let $X_1,\ldots,X_n \stackrel{\mathrm{iid}}{\sim} f_0 \in \mathcal{F}_1$.  There exists a universal constant $C > 0$ such that for $n \geq 2$ and $f_0 \in \mathcal{F}_1$,
\[
E_{f_0}\bigl\{\widehat{\mathrm{KL}}(\hat{f}_n,f_0)\bigr\} \leq \min_{k \in [n]} \biggl\{\frac{Ck}{n}\log^{5/4} \Bigl(\frac{en}{k}\Bigr) + \inf_{f_k \in \mathcal{F}^k} \mathrm{KL}(f_0,f_k)\biggr\}.
\]
\end{theorem}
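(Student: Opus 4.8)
The plan is to run the standard empirical‑process analysis of the maximum likelihood estimator, but localised around the oracle density $f_k$ rather than around the truth, feeding in a \emph{local} bracketing entropy bound for $\mathcal{F}_1$ that captures the fact that near a log‑$k$‑affine density the class behaves like an $O(k)$‑dimensional parametric family. Fix $k\in[n]$ and $f_k\in\mathcal{F}^k$; we may assume $\mathrm{KL}(f_0,f_k)<\infty$, since otherwise the corresponding term in the minimum is infinite. Write $P_0$ for the law of $X_1$ and $\mathbb{P}_n$ for the empirical measure. \textbf{Step 1 (basic inequality).} Since $\log x\le 2(\sqrt{x}-1)$ for all $x>0$, we have $\widehat{\mathrm{KL}}(\hat f_n,f_0)=\mathbb{P}_n\bigl[\log(\hat f_n/f_0)\bigr]\le 2\mathbb{P}_n\bigl[\sqrt{\hat f_n/f_0}-1\bigr]$, and the identity $\int\sqrt{f_0\hat f_n}=1-\frac12\mathrm{H}^2(\hat f_n,f_0)$ turns this into
\[
\widehat{\mathrm{KL}}(\hat f_n,f_0)\;\le\;2(\mathbb{P}_n-P_0)\bigl[\sqrt{\hat f_n/f_0}-1\bigr]\;-\;\mathrm{H}^2(\hat f_n,f_0).
\]
Thus it suffices to bound the expectation of the empirical‑process term, with the negative $\mathrm{H}^2$ available to soak up lower‑order contributions in the peeling. (Since $\widehat{\mathrm{KL}}(\hat f_n,f_0)\ge\mathrm{KL}(\hat f_n,f_0)\ge 0$ by Lemma~\ref{Lemma:Basic}, the conclusion also yields the stated bounds for $\mathrm{KL}$, $\mathrm{H}^2$ and $\mathrm{TV}^2$.)

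\textbf{Step 2 (localisation and peeling around $f_k$).} Because $\mathrm{H}^2(f_0,f_k)\le\mathrm{KL}(f_0,f_k)$ and $\mathrm{H}$ obeys the triangle inequality, any control of $\mathrm{H}(\hat f_n,f_k)$ transfers to $\mathrm{H}(\hat f_n,f_0)$ at the cost of an additive $\mathrm{KL}(f_0,f_k)^{1/2}$. I would therefore peel over dyadic Hellinger shells $\{f\in\mathcal{F}_1:2^{j-1}r<\mathrm{H}(f,f_k)\le 2^{j}r\}$ around $f_k$: on the event that $\hat f_n$ lies in the $j$‑th shell, Step~1 forces the empirical process to exceed a multiple of $2^{2j}r^2-\mathrm{KL}(f_0,f_k)$, while a Talagrand‑type concentration inequality (with variance proxy $\sup_f\mathrm{Var}_{P_0}[\sqrt{f/f_0}-1]\le\sup_f\mathrm{H}^2(f,f_0)$ over the shell) controls the probability of this event. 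Summing the dyadic tail bounds in the usual way yields $E\{\widehat{\mathrm{KL}}(\hat f_n,f_0)\}\lesssim r_*^2+\mathrm{KL}(f_0,f_k)$, where $r_*$ is the smallest $r>0$ with local modulus $\phi_n(r):=E\sup\{(\mathbb{P}_n-P_0)[\sqrt{f/f_0}-1]:f\in\mathcal{F}_1,\ \mathrm{H}(f,f_k)\le r\}\lesssim r^2$. (One could instead bootstrap from the global $n^{-4/5}$ bound of Theorem~\ref{Thm:FdRate}, but peeling gives the sharp dependence directly.) As $\mathrm{H}$, and hence $\phi_n$, is invariant under invertible affine reparametrisation (Lemma~\ref{Lemma:Affine}), we may further assume $f_0$, and therefore $f_k$, is in near‑isotropic position.

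\textbf{Step 3 (the local bracketing entropy bound --- the crux).} The heart of the argument is the claim that near a log‑$k$‑affine density the bracketing entropy is essentially that of a $k$‑dimensional family:
\[
\log N_{[\,]}\!\bigl(t,\{f\in\mathcal{F}_1:\mathrm{H}(f,f_k)\le r\},\mathrm{H}\bigr)\;\lesssim\;k\log(er/t),
\]
\emph{uniformly} over $f_k\in\mathcal{F}^k$ --- in particular with constants free of the slopes, breakpoints and lengths of the affine pieces of $\log f_k$. This is where the isotropic normalisation and the envelope bounds of Lemma~\ref{Lemma:Envelope} enter, the latter being needed to tame the possibly unbounded behaviour of the log‑concave densities near the boundary of $\supp(f_k)$. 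Granting this bound, the chaining integral satisfies $\int_0^r\{k\log(er/t)\}^{1/2}\,dt\lesssim r\sqrt{k}$, so by a maximal inequality $\phi_n(r)\lesssim r\sqrt{k}/\sqrt{n}+(\text{an envelope term})$, and, momentarily ignoring the envelope term, balancing $r^2\asymp r\sqrt{k}/\sqrt{n}$ gives $r_*^2\asymp k/n$.

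\textbf{Step 4 (unbounded likelihood ratios, the logarithmic power, and conclusion).} The functions $\sqrt{f/f_0}-1$ have small $L^2(P_0)$‑norm, equal to $\mathrm{H}(f,f_0)$, but are unbounded above, so the maximal inequality of Step~3 carries an additive envelope term. To control it I would truncate $\sqrt{f/f_0}-1$ at a level $T_n$, bound the truncated empirical process by Bernstein/Talagrand concentration using $\mathrm{H}^2$ as the variance proxy, and bound the truncation remainder using the exponential tail decay of log‑concave densities (again via Lemma~\ref{Lemma:Envelope}) together with the fact that the $X_i$ lie in $\supp(\hat f_n)$. Optimising $T_n$ against the resulting tail term shows that the envelope contribution to $\phi_n$ is of order $(k/n)\log^{5/4}(en/k)$, which dominates the $k/n$ of Step~3, so $r_*^2\asymp (k/n)\log^{5/4}(en/k)$. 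Combining with Step~2, $E\{\widehat{\mathrm{KL}}(\hat f_n,f_0)\}\lesssim (Ck/n)\log^{5/4}(en/k)+\mathrm{KL}(f_0,f_k)$; taking the infimum over $f_k\in\mathcal{F}^k$ and then the minimum over $k\in[n]$ completes the proof. I expect the two genuinely hard ingredients to be (i) the uniform local entropy bound of Step~3 --- establishing the parametric $k\log$ scaling with constants independent of the geometry of $f_k$ --- and (ii) the truncation bookkeeping of Step~4 that isolates the precise exponent $5/4$.
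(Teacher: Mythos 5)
This survey does not reprove the result; it only records that the argument of \citet{kim2018adaptation} rests on empirical process theory and \emph{local bracketing entropy} bounds, and your overall architecture (basic inequality, peeling around the oracle $f_k$, local entropy, maximal inequality) is the right family of techniques. The problem is that the claim on which your entire calculation pivots --- Step~3's assertion that $\log N_{[\,]}\bigl(t,\{f\in\mathcal{F}_1:\mathrm{H}(f,f_k)\le r\},\mathrm{H}\bigr)\lesssim k\log(er/t)$ uniformly in $t$ --- is false. A Hellinger ball of fixed radius $r$ around even the uniform density on $[0,1]$ (an element of $\mathcal{F}^1$) contains every density proportional to $e^\phi$ with $\phi$ concave on $[0,1]$ and $\|\phi\|_\infty\le cr^2$, and the $t$-entropy of this subset grows like $(r^2/t)^{1/2}$ as $t\to 0$; no bound logarithmic in $1/t$ can hold, so the chaining integral you build on it is not available. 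The correct local bound (Theorem~4 of \citealp{kim2018adaptation}) retains the nonparametric $(\delta/\epsilon)^{1/2}$ dependence of the global log-concave entropy, and locality buys an improvement only in the prefactor, which becomes of order $k\log^{5/4}(\cdot)$. This changes the bookkeeping materially: the entropy integral is then of order $\delta\sqrt{k}\,\log^{5/8}(\cdot)$, and the fixed-point equation yields $r_*^2\asymp (k/n)\log^{5/4}(en/k)$ directly. In other words, the exponent $5/4$ originates in the entropy bound itself and propagates through the chaining, rather than arising, as you posit in Step~4, from a truncation/envelope correction layered on top of a clean $k/n$ rate.

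A secondary gap: Theorem~\ref{Thm:Adapt2} is a \emph{sharp} oracle inequality, with leading constant exactly $1$ multiplying $\inf_{f_k\in\mathcal{F}^k}\mathrm{KL}(f_0,f_k)$, which is precisely why the surrounding text singles it out. Your Step~1 inequality $\log x\le 2(\sqrt{x}-1)$ and the Step~2 transfer of the localisation from $f_k$ to $f_0$ via the Hellinger triangle inequality each cost multiplicative constants, so the argument as sketched would deliver at best $C'\inf_{f_k}\mathrm{KL}(f_0,f_k)$ with $C'>1$. Obtaining the constant $1$ requires decomposing $\widehat{\mathrm{KL}}(\hat{f}_n,f_0)$ through the oracle $f_k$ (exploiting $\mathbb{P}_n\log\hat{f}_n\ge\mathbb{P}_n\log f_k$ and $\mathbb{E}\{\mathbb{P}_n\log(f_k/f_0)\}=-\mathrm{KL}(f_0,f_k)$) and controlling the resulting empirical process terms with more care than a generic peeling bound affords. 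You correctly identify the local entropy bound as the crux and the hardest ingredient, but the form you assign to it cannot be right, and the sharpness of the constant is a second feature your route would not recover.
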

To help understand this theorem, first consider the case where $f_0 \in \mathcal{F}^k$.  Then $E_{f_0}\bigl\{\widehat{\mathrm{KL}}(\hat{f}_n,f_0)\bigr\} \leq \frac{Ck}{n}\log^{5/4} (en/k)$, which is nearly the parametric rate when $k$ is small.  More generally, this rate holds when $f_0 \in \mathcal{F}_1$ is only close to $\mathcal{F}^k$ in the sense that the approximation error $\mathrm{KL}(f_0,f_k)$ is $O\bigl(\frac{k}{n}\log^{5/4} \frac{en}{k}\bigr)$.  The result is known as a `sharp' oracle inequality\index{oracle inequality}\index{sharp oracle inequality}, because the leading constant for this approximation error term is 1.  It is worth noting that the techniques of proof, which rely on empirical process theory and local bracketing entropy bounds, are completely different from those used in the proof of Theorem~\ref{Thm:Adap1}.  It is also possible to state multivariate versions of Theorem~\ref{Thm:Adapt2} \citep{feng2021adaptation}, but the results are more complicated, and in particular depend not only on the number of log-affine pieces in the approximating log-concave density, but also on the sum of the number of facets in the polyhedral subdivision of its support into the regions on which it is log-affine.

\section{Linear regression via optimal convex \texorpdfstring{$M$}{M}-estimation}
\label{Sec:LinearRegression}

This section combines ideas from Sections~\ref{Sec:Grenander} and~\ref{Sec:LogConcaveDensityEstimation} in an eminently practical context.  In linear models, the Gauss--Markov theorem is the primary justification for the use of ordinary least squares (OLS) in settings where the Gaussianity of our error distribution may be in doubt.  It states that, provided the errors have a finite second moment, OLS attains the minimal covariance among all linear unbiased estimators.  On the other hand, it is now understood that biased, non-linear estimators can achieve lower mean squared error than OLS~\citep{stein1956inadmissibility}, especially when the noise distribution is appreciably non-Gaussian~\citep{zou2008composite}.

Consider a linear model where $Y_i = X_i^\top\beta_0 + \varepsilon_i$ for $i \in [n]$. Recall that an \textit{$M$-estimator} of $\beta_0 \in \mathbb{R}^d$ based on a loss function $\ell \colon \mathbb{R} \to \mathbb{R}$ is defined as an empirical risk minimiser
\begin{equation}
\label{eq:linreg-M-est}
\hat{\beta} \in \argmin_{\beta \in \mathbb{R}^d} \frac{1}{n}\sum_{i=1}^n \ell(Y_i - X_i^\top \beta), 
\end{equation}
provided that this exists. If $\ell$ is differentiable on $\mathbb{R}$ with negative derivative $\psi = -\ell'$, then $\hat{\beta} \equiv \hat{\beta}_\psi$ solves the corresponding estimating equations 
\begin{equation}
\label{eq:linreg-Z-est}
\frac{1}{n}\sum_{i=1}^n X_i\psi(Y_i - X_i^\top\hat{\beta}_\psi) = 0
\end{equation}
and is referred to as a \textit{$Z$-estimator}. We study a random design setting in which the pairs $(X_1,Y_1),\dotsc,(X_n,Y_n)$ are independent and identically distributed, with $X_1,\dotsc,X_n$ being $\mathbb{R}^d$-valued covariates that are independent of real-valued errors $\varepsilon_1,\ldots,\varepsilon_n$ having density $f_0$. Suppose further that $\mathbb{E}\{X_1\psi(\varepsilon_1)\} = 0$. This means that~$\hat{\beta}_\psi$ is \emph{Fisher consistent} in the sense that the population analogue of~\eqref{eq:linreg-Z-est} is satisfied by the true parameter $\beta_0$, i.e.~$\mathbb{E}\{X_1\psi(Y_1 - X_1^\top\beta_0)\} = 0$.  Then under suitable regularity conditions, including $\psi$ being differentiable and $\mathbb{E}(X_1 X_1^\top) \in \mathbb{R}^{d \times d}$ being invertible, we have
\begin{equation}
\label{eq:M-estimator-asymp}
\sqrt{n}(\hat{\beta}_\psi - \beta_0) \stackrel{d}{\rightarrow} N_d\bigl(0, V_{f_0}(\psi) \cdot \{\mathbb{E}(X_1 X_1^\top)\}^{-1}\bigr) \quad \text{as }n \to \infty, \;\text{where }V_{f_0}(\psi) := \frac{\mathbb{E}\psi^2(\varepsilon_1)}{\{\mathbb{E}\psi'(\varepsilon_1)\}^2}
\end{equation}
\citep[e.g.][]{vdV1998asymptotic}.

If the errors $\varepsilon_1,\varepsilon_2,\dotsc$ have a known absolutely continuous density $f_0$ on $\mathbb{R}$, then we can define the maximum likelihood estimator $\hat{\beta}^{\mathrm{MLE}}$ by taking $\ell = -\log f_0$ in~\eqref{eq:linreg-M-est}. In this case, $\psi = -\ell'$ is the \emph{score function (for location)}\footnote{The score is usually defined as a function of a parameter $\theta \in \mathbb{R}$ as the derivative of the log-likelihood; the link with our terminology comes from considering the location model $\{f_0(\cdot + \theta):\theta \in \mathbb{R}\}$, and evaluating the score at the origin.} $\psi_0 := (f_0'/f_0)\mathbbm{1}_{\{f_0 > 0\}}$.  Already at this stage, it will be helpful to observe that $\psi_0$ is decreasing if and only if $f_0$ is log-concave.  Under appropriate regularity conditions~\citep[e.g.][]{vdV1998asymptotic}, including that the \emph{Fisher information (for location)} $i(f_0) := \int_\mathbb{R} \psi_0^2\,f_0 = \int_{\{f_0 > 0\}}(f_0')^2/f_0$ is finite, we have
\[
\sqrt{n}\,(\hat{\beta}^{\mathrm{MLE}} - \beta_0) \stackrel{d}{\rightarrow} N_d\biggl(0, \frac{\{\mathbb{E}(X_1 X_1^\top)\}^{-1}}{i(f_0)}\biggr)
\]
as $n \to \infty$. The limiting covariance matrix $\{\mathbb{E}(X_1 X_1^\top)\}^{-1}/i(f_0)$ constitutes the usual efficiency lower bound~\citep[Chapter~8]{vdV1998asymptotic}. Thus, $1/i(f_0)$ is the smallest possible value of the \textit{asymptotic variance factor} $V_{f_0}(\psi)$ in the limiting covariance of $\sqrt{n}(\hat{\beta}_\psi - \beta_0)$ in~\eqref{eq:M-estimator-asymp}.

\citet{feng2025optimal} seek to choose $\psi$ in a data-driven manner, such that the corresponding loss function~$\ell$ in~\eqref{eq:linreg-M-est} is convex, and such that the scale factor $V_{f_0}(\psi)$ in the asymptotic covariance~\eqref{eq:M-estimator-asymp} of the downstream estimator of $\beta_0$ is minimised.  Convexity is a particularly convenient property for a loss function, since for the purpose of $M$-estimation, it leads to more tractable theory and computation. 

Let $P_0$ be a probability distribution on $\mathbb{R}$ with a uniformly continuous density $f_0$.  
Letting $\supp f_0 := \{z \in \mathbb{R} : f_0(z) > 0\}$, define $\mathcal{S}_0 \equiv \mathcal{S}(f_0) := \bigl(\inf(\supp f_0), \sup(\supp f_0)\bigr)$, which is the smallest open interval that contains $\supp f_0$. We write $\Psi_\downarrow(f_0)$ for the set of all $\psi \in L^2(P_0)$ that are decreasing and right-continuous, and observe that $\Psi_\downarrow(f_0)$ is a convex cone.  For $\psi \in \Psi_\downarrow(f_0)$ with $\int_\mathbb{R} \psi^2\,dP_0 > 0$, let
\begin{equation}
\label{eq:Vp0}
V_{f_0}(\psi) := \frac{\int_\mathbb{R}\psi^2\,dP_0}{\bigl(\int_{\mathcal{S}_0}f_0\,d\psi\bigr)^2} \in [0,\infty],
\end{equation}
where we have modified the denominator in~\eqref{eq:M-estimator-asymp} to extend the original definition to non-differentiable functions in $\Psi_\downarrow(f_0)$ such as $z \mapsto -\mathrm{sgn}(z)$.  As a first step towards minimising $V_{f_0}(\psi)$ over $\psi \in \Psi_\downarrow(f_0)$, note that $V_{f_0}(c\psi) = V_{f_0}(\psi)$ for every $c > 0$, so any minimiser is at best unique up to a positive scalar.  Ignoring unimportant edge cases where the denominator in~\eqref{eq:Vp0} is zero or infinity, our optimisation problem can therefore be formulated as a constrained minimisation of the numerator in~\eqref{eq:Vp0} subject to the denominator being equal to 1. This motivates the definition of
\[
D_{f_0}(\psi) := \int_\mathbb{R} \psi^2\,dP_0 + 2\int_{\mathcal{S}_0} f_0\,d\psi \in [-\infty,\infty)
\]
for $\psi \in \Psi_\downarrow(f_0)$. If $\psi$ is locally absolutely continuous on $\mathcal{S}_0$ with derivative $\psi'$ Lebesgue almost everywhere, then
\[
D_{f_0}(\psi) = \int_\mathbb{R} \psi^2\,dP_0 + 2\int_{\mathcal{S}_0} \psi'f_0 = \int_\mathbb{R} (\psi^2 + 2\psi')\,dP_0 = \mathbb{E}\bigl(\psi^2(\varepsilon_1) + 2\psi'(\varepsilon_1)\bigr)
\]
when $\varepsilon_1 \sim P_0$; we recognise this as the \emph{score matching objective}~\citep{hyvarinen05score,song2021train}.

The formal link between $V_{f_0}(\cdot)$ and $D_{f_0}(\cdot)$ is that for $\psi \in \Psi_\downarrow(f_0)$ with $\int_\mathbb{R} \psi^2\,dP_0 > 0$, we have $\int_{\mathcal{S}_0}f_0\,d\psi \leq 0$ and $c\psi \in \Psi_\downarrow(f_0)$ for all $c \geq 0$, so
\[
\inf_{c \geq 0}D_{f_0}(c\psi) = \inf_{c \geq 0}\,\Bigl(c^2\int_\mathbb{R}\psi^2\,dP_0 + 2c\int_{\mathcal{S}_0}f_0\,d\psi\Bigr) = -\frac{\bigl(\int_{\mathcal{S}_0}f_0\,d\psi\bigr)^2}{\int_\mathbb{R}\psi^2\,dP_0} = -\frac{1}{V_{f_0}(\psi)}.
\]
Thus, minimising $V_{f_0}(\cdot)$ over $\Psi_\downarrow(f_0)$ is equivalent to minimising $D_{f_0}(\cdot)$ up to a scalar multiple, but $D_{f_0}(\cdot)$ is a convex function that is more tractable than $V_{f_0}(\cdot)$.  Further, when $f_0$ is absolutely continuous with $i(f_0) < \infty$,
\[
D_{f_0}(\psi) = \int_{\mathbb{R}} (\psi - \psi_0)^2\,dP_0 - \int_{\mathbb{R}} \psi_0^2\,dP_0 = \|\psi - \psi_0\|_{L^2(P_0)}^2 - i(f_0)
\]
for all $\psi \in \Psi_\downarrow(f_0)$, so 
\[
\psi_0^* \in \argmin_{\psi \in \Psi_\downarrow(f_0)}D_{f_0}(\psi) = \argmin_{\psi \in \Psi_\downarrow(f_0)}\|\psi - \psi_0\|_{L^2(P_0)}^2.
\]
Thus, $\psi_0^*$ is a version of the \textit{$L^2(P_0)$-antitonic projection} of~$\psi_0$ onto $\Psi_\downarrow(f_0)$.

By exploiting this connection with score matching together with ideas from monotone function estimation similar to those in Section~\ref{Sec:Grenander}, \citet{feng2025optimal} prove that the solution to our asymptotic variance minimisation problem is the function~$\psi_0^*$ constructed explicitly in the following lemma.
\begin{lemma}
\label{lem:psi0-star}
Let $P_0$ be a distribution with a uniformly continuous density $f_0$ on $\mathbb{R}$. Let $F_0 \colon [-\infty,\infty] \to [0,1]$ be the corresponding distribution function, and for $u \in [0,1]$, define
\[
F_0^{-1}(u) := \inf\{z \in [-\infty,\infty] : F_0(z) \geq u\} \quad\text{and}\quad J_0(u) := (f_0 \circ F_0^{-1})(u).
\] 
Writing $\hat{J}_0$ for the least concave majorant of $J_0$ on $[0,1]$, and $\hat{J}_0^{(\mathrm{R})}$ for the right derivative of $\hat{J}_0$, we have that 
\[
\psi_0^* := \hat{J}_0^{(\mathrm{R})} \circ F_0
\]
is decreasing and right-continuous as a function from $\mathbb{R}$ to $[-\infty,\infty]$, provided that we set $\hat{J}_0^{(\mathrm{R})}(1) := \lim_{u \nearrow 1}\hat{J}_0^{(\mathrm{R})}(u)$.
Moreover, $\psi_0^*(z) \in \mathbb{R}$ if and only if $z \in \mathcal{S}_0$.
\end{lemma}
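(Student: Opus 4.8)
The plan is to build $\psi_0^*$ out of standard properties of least concave majorants and of one-sided derivatives of concave functions, once one has upgraded the hypothesis on $f_0$ to the key regularity fact that $J_0$, and hence $\hat{J}_0$, is continuous on the \emph{closed} interval $[0,1]$.  To that end I would first note that uniform continuity of $f_0$ together with $\int_{\mathbb{R}} f_0 = 1$ forces $f_0$ to be bounded and to vanish at $\pm\infty$, so its extension to $[-\infty,\infty]$ by $0$ is continuous, and that $F_0$ is continuous with $\{z : 0 < F_0(z) < 1\} = \mathcal{S}_0 = (a,b)$, where $a := \inf\supp f_0$, $b := \sup\supp f_0$, and $f_0(a) = 0$ whenever $a > -\infty$ (similarly at $b$).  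Then $J_0 \in C([0,1])$ with $J_0(0) = J_0(1) = 0$: the non-decreasing, left-continuous map $F_0^{-1}$ has as its only discontinuities jumps across maximal flat stretches of $F_0$, on whose closure $F_0$ is constant and hence $f_0 \equiv 0$, so $J_0 = f_0 \circ F_0^{-1}$ is in fact continuous (and zero) there and is a composition of continuous maps elsewhere, while $F_0^{-1}(0^+) = a$ and $F_0^{-1}(1^-) = b$ give the endpoint values.  From the chord representation of the least concave majorant, together with a compactness argument exploiting continuity of $J_0$, one then obtains that $\hat{J}_0$ is concave, non-negative, bounded, and continuous on $[0,1]$, with $\hat{J}_0(0) = \hat{J}_0(1) = 0$.

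Granting this, the first assertion is essentially routine.  The right derivative $\hat{J}_0^{(\mathrm{R})}$ is non-increasing on $[0,1)$ and, because $\hat{J}_0$ is continuous there, also right-continuous on $[0,1)$: the standard chord-slope and tangent-slope inequalities for concave functions give $\liminf_{u \downarrow u_0}\hat{J}_0^{(\mathrm{R})}(u) \ge \hat{J}_0^{(\mathrm{R})}(u_0) \ge \limsup_{u \downarrow u_0}\hat{J}_0^{(\mathrm{R})}(u)$, and this stays valid when $\hat{J}_0^{(\mathrm{R})}(u_0) = +\infty$.  Setting $\hat{J}_0^{(\mathrm{R})}(1) := \lim_{u \nearrow 1}\hat{J}_0^{(\mathrm{R})}(u)$ extends it to a non-increasing function on $[0,1]$.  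Since $F_0$ is non-decreasing and right-continuous, $\psi_0^* = \hat{J}_0^{(\mathrm{R})} \circ F_0$ is then non-increasing, and right-continuous at every $z$ with $F_0(z) < 1$; at any $z$ with $F_0(z) = 1$ (which can happen only when $b < \infty$, and then for all $z \ge b$) $\psi_0^*$ is locally constant, so right-continuity holds throughout $\mathbb{R}$.

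For the second assertion, one direction is immediate: if $z \in \mathcal{S}_0$ then $F_0(z) \in (0,1)$, and the right derivative of a finite concave function is finite in the interior of its domain, so $\psi_0^*(z) \in \mathbb{R}$.  For the converse, suppose $z \notin \mathcal{S}_0$, so $F_0(z) \in \{0,1\}$; say $F_0(z) = 0$, which forces $a > -\infty$.  I claim $\hat{J}_0^{(\mathrm{R})}(0) = +\infty$, whence $\psi_0^*(z) = +\infty \notin \mathbb{R}$.  Assume instead $\hat{J}_0^{(\mathrm{R})}(0) = c \in \mathbb{R}$; necessarily $c \ge 0$, since otherwise $J_0(u) \le \hat{J}_0(u) \le cu < 0$ for $u \in (0,1)$, which is impossible.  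Concavity and $\hat{J}_0(0) = 0$ give $\hat{J}_0(u) \le cu$ on $[0,1]$, hence $J_0(u) \le cu$, and --- after discarding the flat stretches of $F_0$, where $f_0$ vanishes identically --- the pointwise bound $f_0(z') \le c\,F_0(z')$ for $z'$ in a right-neighbourhood of $a$.  Writing $G(z) := F_0(z) = \int_a^z f_0$ for $z \ge a$ (so that $G(a) = 0$, $G \ge 0$ and $G' = f_0$), this reads $G' \le cG$, and a Gr\"onwall argument forces $G \equiv 0$, i.e.\ $f_0 \equiv 0$, on a right-neighbourhood of $a$, contradicting $a = \inf\supp f_0$.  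The case $F_0(z) = 1$ is handled identically after the reflection $z \mapsto -z$, which replaces $J_0$ by $J_0(1 - \cdot)$ and swaps $a$ with $-b$, giving $\hat{J}_0^{(\mathrm{R})}(1) = -\infty$ when $b < \infty$.

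I expect the part requiring the most care to be the continuity of $J_0$, and hence of $\hat{J}_0$, up to the endpoints of $[0,1]$: this is exactly what makes $\hat{J}_0^{(\mathrm{R})}$ right-continuous at $0$, which in turn is what is needed for right-continuity of $\psi_0^*$ at $z = a$ in the delicate case $\hat{J}_0^{(\mathrm{R})}(0) = +\infty$.  The Gr\"onwall step is the only genuinely analytic ingredient, and it is what pins down that the endpoint slopes of $\hat{J}_0$ are infinite precisely off $\mathcal{S}_0$.
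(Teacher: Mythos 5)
Your argument is correct. Note that this survey states Lemma~\ref{lem:psi0-star} without proof, deferring to \citet{feng2025optimal}, so there is no in-paper argument to compare against; judged on its own terms, your proof is complete. The three load-bearing steps all check out: (i) uniform continuity plus integrability gives $f_0(z) \to 0$ as $|z| \to \infty$, and the flat stretches of $F_0$ (on which $f_0 \equiv 0$ by continuity of $f_0$) are exactly where $F_0^{-1}$ jumps, so $J_0$ is continuous on $[0,1]$ with $J_0(0) = J_0(1) = 0$ and $\hat{J}_0$ is a continuous concave function on the closed interval; (ii) the extended-value right-continuity of $\hat{J}_0^{(\mathrm{R})}$ at $0$, which you correctly isolate as the delicate point for right-continuity of $\psi_0^*$ at $z = \inf\supp f_0$ when this is finite, follows from $\liminf_{v \downarrow 0}\hat{J}_0^{(\mathrm{R})}(v) \geq \sup_{w > 0}\{\hat{J}_0(w) - \hat{J}_0(0)\}/w = \hat{J}_0^{(\mathrm{R})}(0)$, valid also when the right-hand side is $+\infty$; and (iii) the Gr\"onwall step, which converts a finite slope $c \geq 0$ of $\hat{J}_0$ at $0$ into $f_0 \leq cF_0$ to the right of $a$ and hence $F_0 \equiv 0$ there, correctly forces the endpoint slopes to be infinite precisely off $\mathcal{S}_0$. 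One small point worth making explicit: the passage from $J_0(u) \leq cu$ to the pointwise bound $f_0(z') \leq cF_0(z')$ needs the case split that either $F_0^{-1}\bigl(F_0(z')\bigr) = z'$, so the bound is immediate, or else $z'$ lies in a flat stretch of $F_0$ where $f_0(z') = 0$ anyway; you gesture at this by ``discarding the flat stretches'', and that is exactly the right observation, but it deserves a sentence in a final write-up.
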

We call $J_0$ the \emph{density quantile function} \citep{jones1992estimating}. When $f_0$ is the standard Cauchy density, Figure~\ref{fig:psi0-star-cauchy} illustrates $J_0$, and its least concave majorant $\hat{J}_0$, as well as the corresponding score functions $\psi_0 = f_0'/f_0$ and $\psi_0^*$.

\begin{figure}[htb]
\centering
\includegraphics[width=0.485\textwidth,trim={5.3cm 10cm 5.3cm 10cm},clip]{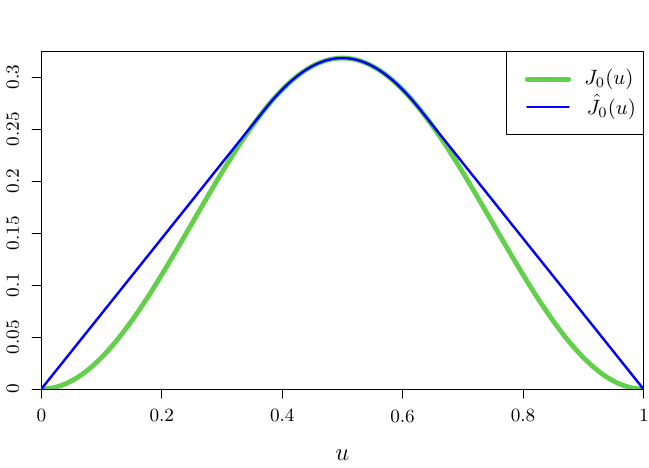}
\hfill
\includegraphics[width=0.485\textwidth,trim={5cm 9.82cm 5cm 10cm},clip]{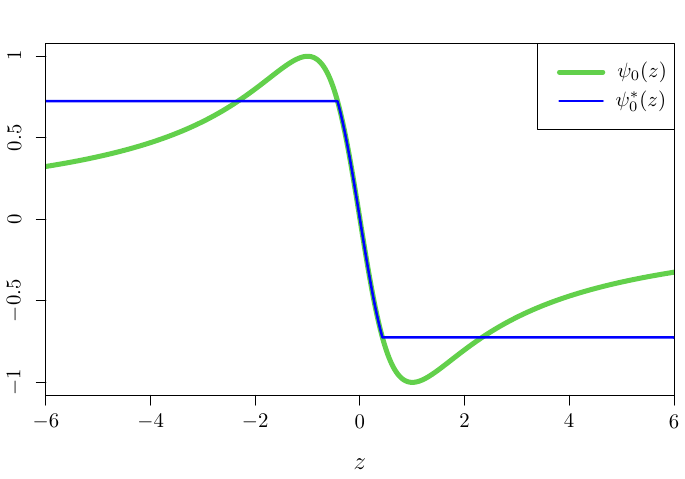}
\vspace{-0.3cm}
\caption{\emph{Left}: The density quantile function $J_0$ and its least concave majorant $\hat{J}_0$ for a standard Cauchy density. \emph{Right}: The corresponding score functions $\psi_0$ and $\psi_0^*$.}
\label{fig:psi0-star-cauchy}
\end{figure}

\begin{theorem}
In the setting of Lemma~\ref{lem:psi0-star}, the following statements hold.
\begin{enumerate}[(a)]
\item $\int_\mathbb{R}\psi_0^*\,dP_0 = 0$.
\item Suppose that $i^*(f_0) := \int_\mathbb{R}(\psi_0^*)^2\,dP_0 < \infty$.  Then $\psi_0^*$ is the unique minimiser of $D_{f_0}(\cdot)$ over $\Psi_\downarrow(f_0)$. Moreover, for every $\psi \in \Psi_\downarrow(f_0)$ satisfying $\int_\mathbb{R}\psi^2\,dP_0 > 0$, we have 
\[
V_{f_0}(\psi) \geq V_{f_0}(\psi_0^*) = \frac{1}{i^*(f_0)} \in (0,\infty),
\]
with equality if and only if $\psi = \lambda\psi_0^*$ for some $\lambda > 0$.
\item If $f_0$ is absolutely continuous on $\mathbb{R}$, then $0 < i^*(f_0) \leq i(f_0)$, with equality if and only if $f_0$ is log-concave.  
\end{enumerate}
\end{theorem}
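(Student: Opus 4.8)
The unifying device is a change of variables to the quantile scale. For $\psi \in \Psi_\downarrow(f_0)$ put $g_\psi := \psi \circ F_0^{-1}$ on $(0,1)$. Since $f_0$ is a density, $F_0$ is continuous, so $F_0(Z) \sim \mathrm{Unif}[0,1]$ when $Z \sim P_0$ and $F_0^{-1}(F_0(Z)) = Z$ almost surely; hence $\int_\mathbb{R} \psi^2 \, dP_0 = \int_0^1 g_\psi^2$, and (writing $f_0 = J_0 \circ F_0$ and pushing the Stieltjes measure $d\psi$ forward under $F_0$, the flat stretches of $F_0$ being harmless because $J_0$ vanishes on them) $\int_{\mathcal{S}_0} f_0 \, d\psi = \int_{(0,1)} J_0 \, dg_\psi$. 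Thus $D_{f_0}(\psi) = \widetilde{D}(g_\psi)$, where $\widetilde{D}(g) := \int_0^1 g^2 + 2\int_{(0,1)} J_0 \, dg$, the map $\psi \mapsto g_\psi$ carries $\Psi_\downarrow(f_0)$ onto the right-continuous decreasing functions in $L^2(0,1)$ (up to behaviour on $\{f_0 = 0\}$), and $g_{\psi_0^*} = \hat{J}_0^{(\mathrm{R})}$. Everything then reduces to a Grenander-type analysis of $\widetilde{D}$, in the spirit of Marshall's lemma in Section~\ref{Sec:Grenander} but on $[0,1]$. For part (a), $\int_\mathbb{R} \psi_0^* \, dP_0 = \int_0^1 \hat{J}_0^{(\mathrm{R})}(u) \, du = \hat{J}_0(1) - \hat{J}_0(0)$ by the fundamental theorem of calculus for the locally Lipschitz concave function $\hat{J}_0$ (the integral converges because $f_0$, being uniformly continuous and integrable, is bounded, so $J_0$ and $\hat{J}_0$ are bounded). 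One checks $J_0(0) = J_0(1) = 0$: indeed $F_0^{-1}(0) = -\infty$ and $F_0^{-1}(1) = \sup \supp f_0$, and $f_0$ vanishes at $\pm\infty$ by integrability plus uniform continuity and at a finite support endpoint by continuity; and $\hat{J}_0(0) = J_0(0)$, $\hat{J}_0(1) = J_0(1)$ since a convex combination of points of $[0,1]$ equal to an endpoint is trivial. Hence $\int_\mathbb{R} \psi_0^* \, dP_0 = 0$.

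For part (b), write $h := \psi - \psi_0^*$ and $\tilde{h} := g_\psi - \hat{J}_0^{(\mathrm{R})}$; expanding the quadratic and transferring to the quantile scale,
\[
D_{f_0}(\psi) - D_{f_0}(\psi_0^*) = \|h\|_{L^2(P_0)}^2 + 2\Bigl( \langle \hat{J}_0^{(\mathrm{R})}, \tilde{h} \rangle_{L^2(0,1)} + \int_{(0,1)} J_0 \, d\tilde{h} \Bigr).
\]
Since $\hat{J}_0$ is concave, hence of bounded variation, with $\hat{J}_0(0) = \hat{J}_0(1) = 0$ and $d\hat{J}_0 = \hat{J}_0^{(\mathrm{R})} \, du$, integration by parts gives $\int_{(0,1)} \hat{J}_0 \, d\tilde{h} = -\langle \hat{J}_0^{(\mathrm{R})}, \tilde{h} \rangle_{L^2(0,1)}$ (boundary terms vanish, using $i^*(f_0) < \infty$ to control $\tilde{h}$ near $0$ and $1$), so writing $J_0 = \hat{J}_0 - (\hat{J}_0 - J_0)$ the bracket collapses to $-\int_{(0,1)} (\hat{J}_0 - J_0) \, d\tilde{h}$. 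Now invoke the characterisation of the least concave majorant: $\hat{J}_0 - J_0 \geq 0$, and the nonpositive measure $d\hat{J}_0^{(\mathrm{R})}$ is supported on the contact set $\{\hat{J}_0 = J_0\}$, so $\int_{(0,1)} (\hat{J}_0 - J_0) \, d\hat{J}_0^{(\mathrm{R})} = 0$ and $\int_{(0,1)} (\hat{J}_0 - J_0) \, d\tilde{h} = \int_{(0,1)} (\hat{J}_0 - J_0) \, dg_\psi \leq 0$ because $g_\psi$ is decreasing. Hence $D_{f_0}(\psi) - D_{f_0}(\psi_0^*) \geq \|h\|_{L^2(P_0)}^2 \geq 0$, with equality forcing $\psi = \psi_0^*$ $P_0$-almost everywhere; this is the (essential) uniqueness. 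For the value, $\Psi_\downarrow(f_0)$ is a cone, so $D_{f_0}(\psi_0^*) = \inf_{c \geq 0} D_{f_0}(c\psi_0^*) = -1/V_{f_0}(\psi_0^*)$ by the displayed identity $\inf_{c \geq 0} D_{f_0}(c\psi) = -1/V_{f_0}(\psi)$ preceding Lemma~\ref{lem:psi0-star}; a direct computation using the contact set and a further integration by parts (boundary terms again vanishing, as $i^*(f_0) < \infty$ makes $\hat{J}_0^{(\mathrm{R})} \in L^2(0,1)$) gives $D_{f_0}(\psi_0^*) = i^*(f_0) + 2\int_{(0,1)} \hat{J}_0 \, d\hat{J}_0^{(\mathrm{R})} = -i^*(f_0)$, whence $V_{f_0}(\psi_0^*) = 1/i^*(f_0)$, which lies in $(0,\infty)$ since $i^*(f_0)>0$ ($\psi_0^*$ is not $P_0$-a.e.\ constant because $\hat{J}_0(0)=\hat{J}_0(1)=0$ while $\hat{J}_0$ is not identically $0$, $f_0$ being a density). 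Finally, for $\psi \in \Psi_\downarrow(f_0)$ with $\int_\mathbb{R} \psi^2 \, dP_0 > 0$ we have $-1/V_{f_0}(\psi) = \inf_{c \geq 0} D_{f_0}(c\psi) \geq D_{f_0}(\psi_0^*) = -1/V_{f_0}(\psi_0^*)$, so $V_{f_0}(\psi) \geq V_{f_0}(\psi_0^*)$; equality holds iff some $c\psi$ minimises $D_{f_0}$, iff (by uniqueness, $c=0$ being excluded by $i^*(f_0)>0$) $\psi = \lambda \psi_0^*$ for some $\lambda > 0$, and conversely $V_{f_0}$ is scale-invariant.

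For part (c), assume $i(f_0) < \infty$ (the bound being trivial otherwise). Then $\psi_0 \in L^2(P_0)$ with $\|\psi_0\|_{L^2(P_0)}^2 = i(f_0)$, and the identity $D_{f_0}(\psi) = \|\psi - \psi_0\|_{L^2(P_0)}^2 - i(f_0)$ on $\Psi_\downarrow(f_0)$ exhibits the minimisation of $D_{f_0}$ as the $L^2(P_0)$-projection of $\psi_0$ onto the closed convex cone $\Psi_\downarrow(f_0)$; on the quantile scale this is the classical Grenander identification of the projection of $J_0' = \psi_0 \circ F_0^{-1}$ onto the decreasing functions with $\hat{J}_0^{(\mathrm{R})}$, so the projection is $\psi_0^*$ and in particular $i^*(f_0) = \|\psi_0^*\|_{L^2(P_0)}^2 < \infty$. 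The Pythagorean identity for projection onto a cone then gives
\[
i(f_0) - i^*(f_0) = \|\psi_0^* - \psi_0\|_{L^2(P_0)}^2 \geq 0 ,
\]
so $0 < i^*(f_0) \leq i(f_0)$, with equality iff $\psi_0 = \psi_0^*$ $P_0$-almost everywhere. As $\psi_0^*$ is decreasing, equality forces $\psi_0$ to have a decreasing version, hence $f_0$ to be log-concave; conversely, if $f_0$ is log-concave then $\psi_0$ is decreasing, so $J_0' = \psi_0 \circ F_0^{-1}$ is decreasing, so $J_0$ is concave, whence $\hat{J}_0 = J_0$ and $\psi_0^* = \hat{J}_0^{(\mathrm{R})} \circ F_0 = \psi_0$ $P_0$-almost everywhere.

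The main obstacle is the rigorous justification of the quantile-scale reduction and of the integrations by parts. The Stieltjes change of variables $\int_{\mathcal{S}_0} f_0 \, d\psi = \int_{(0,1)} J_0 \, dg_\psi$ must be carried across the flat stretches of $F_0$ (harmless only because $f_0$ and $J_0$ vanish there), and—more delicately—the boundary contributions at $0$ and $1$ in each integration by parts must be shown to vanish. This is precisely where the uniform continuity of $f_0$ (through $J_0(0) = J_0(1) = 0$ and $\hat{J}_0(0) = \hat{J}_0(1) = 0$) and the finiteness of $i^*(f_0)$ (through $\hat{J}_0^{(\mathrm{R})} \in L^2(0,1)$, which forces $\hat{J}_0(u) = o(u^{1/2})$ and $\hat{J}_0(1-u) = o(u^{1/2})$ and so kills the products $\hat{J}_0 \hat{J}_0^{(\mathrm{R})}$ at the endpoints) do the work. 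Once these points are in place, the heart of the argument is the same Marshall/Grenander contact-set computation already used in Section~\ref{Sec:Grenander}.
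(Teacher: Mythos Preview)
The paper is a survey and does not give a proof of this theorem; it states the result (due to \citet{feng2025optimal}) and surrounds it with the key identities $\inf_{c\geq 0}D_{f_0}(c\psi)=-1/V_{f_0}(\psi)$ and, when $i(f_0)<\infty$, $D_{f_0}(\psi)=\|\psi-\psi_0\|_{L^2(P_0)}^2-i(f_0)$, together with the interpretation of $\psi_0^*$ as the $L^2(P_0)$-antitonic projection of $\psi_0$. Your argument is correct and is precisely the natural way to flesh out this sketch: the quantile-scale change of variables, the contact-set computation for the least concave majorant, and the cone-projection Pythagorean identity are exactly the Grenander-type ingredients the paper alludes to when it says the proof ``exploit[s] this connection with score matching together with ideas from monotone function estimation similar to those in Section~\ref{Sec:Grenander}''. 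Your closing paragraph correctly identifies where the hypotheses (uniform continuity of $f_0$, finiteness of $i^*(f_0)$) are actually used, namely in killing the boundary terms in the integrations by parts.
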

When $i^*(f_0) < \infty$, the \emph{antitonic relative efficiency} 
\[
\mathrm{ARE}^*(f_0) := \frac{i^*(f_0)}{i(f_0)}
\]
therefore quantifies the price we pay in statistical efficiency for insisting that our loss function be convex; this terminology is justified below.  It turns out that when $f_0$ is the Cauchy density, we have $\mathrm{ARE}^*(f_0) \geq 0.87$, showing that even though this heavy-tailed density is far from log-concave, the efficiency loss is surprisingly mild.

The antitonic score projection $\psi_0 \mapsto \psi_0^*$ yields a notion of projection of the corresponding density onto the log-concave class.  Importantly, however, this projection is different from the Kullback--Leibler (maximum likelihood) projection studied in Section~\ref{Sec:LogConcaveDensityEstimation}.  More precisely, when $f_0$ and $f_1$ are densities that are locally absolutely continuous on $\mathbb{R}$, the \emph{Fisher divergence} from $f_1$ to $f_0$ is defined as
\[
I(f_0,f_1) := 
\begin{cases}
\displaystyle \int_{\{f_0 > 0\}} \biggl(\Bigl(\log\frac{f_0}{f_1}\Bigr)'\biggr)^2\,f_0 \;&\text{if }\supp f_0 \subseteq \supp f_1 \\
\infty \;&\text{otherwise.}
\end{cases}
\]
The following lemma establishes the connection between the projected score function and the Fisher divergence.
\begin{lemma}[\citealp{feng2025optimal}]
In the setting of Lemma~\ref{lem:psi0-star}, there is a unique continuous log-concave density~$f_0^*$ on $\mathbb{R}$ such that $\log f_0^*$ has right derivative $\psi_0^*$ on $\mathcal{S}_0$.  Furthermore, if $f_0$ is absolutely continuous, then~$f_0^*$ minimises $I(f_0,f)$ over $f \in \mathcal{F}_1$, and if $f_0 \in \mathcal{F}_1$, then $f_0^* = f_0$.
\end{lemma}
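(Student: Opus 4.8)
The plan is to construct $f_0^*$ explicitly from the function $\psi_0^*$ of Lemma~\ref{lem:psi0-star}, and then to identify the Fisher-divergence minimisation with the $L^2(P_0)$-antitonic projection of the score $\psi_0$ onto $\Psi_\downarrow(f_0)$ that was used to build $\psi_0^*$ in the first place.

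\emph{Existence and uniqueness of $f_0^*$.} By Lemma~\ref{lem:psi0-star}, $\psi_0^*$ is decreasing, right-continuous, and real-valued precisely on $\mathcal{S}_0$. Fixing $m \in \mathcal{S}_0$, I would set $\phi^*(z) := \int_m^z \psi_0^*(t)\,dt$ for $z \in \mathcal{S}_0$; as the primitive of a decreasing right-continuous function it is concave on $\mathcal{S}_0$, has right derivative $\psi_0^*$, and is determined up to an additive constant. The substantive point is that $e^{\phi^*}$ is integrable over $\mathcal{S}_0$ and extends continuously by $0$ at both endpoints of $\mathcal{S}_0$, so that $f_0^* := e^{\phi^* + c}\mathbbm{1}_{\mathcal{S}_0}$ is a continuous log-concave density on $\mathbb{R}$ for a unique normalising constant $c$. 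To see this I would substitute $u = F_0(z)$, expressing $-\phi^*(z) = \int_z^m \psi_0^*$ (for $z$ near the left endpoint of $\mathcal{S}_0$) as an integral of $\hat{J}_0^{(\mathrm{R})}$ against the push-forward of Lebesgue measure; since $0 \le J_0 \le \hat{J}_0$ and $\hat{J}_0^{(\mathrm{R})} \ge 0$ near $0$, this dominates $\int (\log \hat{J}_0)'(u)\,du$, which diverges to $+\infty$ because uniform continuity of $f_0$ forces $J_0(0^+) = 0$, hence $\hat{J}_0(0^+) = 0$. Thus $\phi^*(z) \to -\infty$ at the left endpoint, and symmetrically at the right, and concavity of $\phi^*$ then yields the integrability. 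For uniqueness, any continuous log-concave density $f$ with $(\log f)^{(\mathrm{R})} = \psi_0^*$ on $\mathcal{S}_0$ is positive on $\mathcal{S}_0$, and since $\psi_0^*$ diverges to $\pm\infty$ at any finite endpoint of $\mathcal{S}_0$ while a concave $\log f$ has finite one-sided derivatives at interior points of $\supp f$, the set $\supp f$ cannot extend beyond $\mathcal{S}_0$; two such densities therefore differ by a multiplicative constant on $\mathcal{S}_0$, necessarily equal to $1$.

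\emph{Minimisation of the Fisher divergence.} Assume $f_0$ is absolutely continuous and take $f \in \mathcal{F}_1$. If $\supp f_0 \not\subseteq \supp f$ then $I(f_0,f) = \infty$ and there is nothing to prove; otherwise $\supp f$, being an interval, contains $\mathcal{S}_0$, and $\psi_f := (\log f)^{(\mathrm{R})}$ is decreasing and finite on $\mathcal{S}_0$. Since $\log f$ is differentiable off a $P_0$-null set, the chain rule gives $\bigl(\log(f_0/f)\bigr)' = \psi_0 - \psi_f$ $P_0$-a.e., so $I(f_0,f) = \|\psi_0 - \psi_f\|_{L^2(P_0)}^2$; the same computation (with $\psi_{f_0^*} = \psi_0^*$ by construction and $\supp f_0 \subseteq \mathcal{S}_0 = \supp f_0^*$) gives $I(f_0, f_0^*) = \|\psi_0 - \psi_0^*\|_{L^2(P_0)}^2$. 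In the only nontrivial case, $I(f_0,f) < \infty$, the function $\psi_f$ agrees $P_0$-a.e.\ with a member of $\Psi_\downarrow(f_0)$, so the variational characterisation of the $L^2(P_0)$-projection $\psi_0^*$ of $\psi_0$ onto the convex cone $\Psi_\downarrow(f_0)$ (recorded just before Lemma~\ref{lem:psi0-star}) yields $\|\psi_0 - \psi_f\|_{L^2(P_0)}^2 \ge \|\psi_0 - \psi_0^*\|_{L^2(P_0)}^2$, with equality only if $\psi_f = \psi_0^*$ $P_0$-a.e., which by the uniqueness step above forces $f = f_0^*$.

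\emph{The correctly specified case.} If $f_0 \in \mathcal{F}_1$, then $\psi_0 = (\log f_0)^{(\mathrm{R})}$ is already decreasing and right-continuous, so $J_0$ is concave, $\hat{J}_0 = J_0$, and hence $\psi_0^* = \hat{J}_0^{(\mathrm{R})} \circ F_0 = (\log f_0)^{(\mathrm{R})}$ on $\mathcal{S}_0$ (using that $F_0$ is continuous and strictly increasing on $\mathcal{S}_0$, so the right-continuous functions $J_0^{(\mathrm{R})}\circ F_0$ and $(\log f_0)^{(\mathrm{R})}$ agree, being equal Lebesgue-a.e.). Thus $f_0$ itself satisfies the defining property of $f_0^*$, so $f_0^* = f_0$ by uniqueness.

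I expect the integrability-and-endpoint analysis in the first step to be the main obstacle: one must exclude the possibility that $\phi^*$ remains bounded near an endpoint of $\mathcal{S}_0$ (which would make $f_0^*$ discontinuous there), and the tidiest route seems to be the comparison, via $J_0 \le \hat{J}_0$, of $\psi_0^*$ with the logarithmic derivative of $\hat{J}_0 \circ F_0$, combined with the vanishing of $\hat{J}_0$ at the ends of $[0,1]$ forced by uniform continuity of $f_0$, and handled with the usual care over flat stretches of $F_0$. A subsidiary nuisance is keeping track of the degenerate cases in which $i(f_0)$, $i^*(f_0)$ or $I(f_0,f)$ is infinite, where the Hilbert-space projection language must be applied with care.
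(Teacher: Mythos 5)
The survey states this lemma without proof (it is quoted from \citet{feng2025optimal}), so there is no in-paper argument to compare against; your reconstruction follows exactly the route the surrounding text sets up --- integrating $\psi_0^*$ to build $\log f_0^*$, using the comparison $f_0 = J_0\circ F_0 \leq \hat{J}_0\circ F_0$ together with $\hat{J}_0(0^+)=\hat{J}_0(1^-)=0$ to force $\log f_0^*\to-\infty$ at the endpoints of $\mathcal{S}_0$ (whence integrability, continuity, and the uniqueness of the support), and identifying $I(f_0,\cdot)$ with the squared $L^2(P_0)$-distance between score functions so that the antitonic-projection characterisation of $\psi_0^*$ yields the minimisation claim. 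The argument is sound, and the only caveats are the ones you flag yourself: the bookkeeping on flat stretches of $F_0$, and the degenerate case $i(f_0)=\infty$, where the projection identity recorded before Lemma~\ref{lem:psi0-star} is only stated under $i(f_0)<\infty$.
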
 
Writing $f_0^{\mathrm{ML}} := \argmax_{f \in \mathcal{F}_1} \int_{-\infty}^\infty f_0 \log f$ for the maximum likelihood log-concave projection, the  $M$-estimators 
\[
\hat{\beta}_{\psi_0^{\mathrm{ML}}} \in \argmax_{\beta \in \mathbb{R}^d} \sum_{i=1}^n \log f_0^{\mathrm{ML}}(Y_i - X_i^\top\beta) \quad\text{and}\quad \hat{\beta}_{\psi_0^*} \in \argmax_{\beta \in \mathbb{R}^d} \sum_{i=1}^n \log f_0^*(Y_i - X_i^\top\beta)
\]
are generally different. In fact, the following result shows that there exist error distributions for which the asymptotic covariance of $\hat{\beta}_{\psi_0^{\mathrm{ML}}}$ is arbitrarily large compared with that of the optimal convex $M$-estimator $\hat{\beta}_{\psi_0^*}$, even when the latter is close to being asymptotically efficient.
\begin{proposition}[\citealp{feng2025optimal}]
For every $\epsilon \in (0,1)$, there exists a distribution $P_0$ with a finite mean and an absolutely continuous density $f_0$ such that $i(f_0) < \infty$, and the log-concave maximum likelihood projection~$f_0^{\mathrm{ML}}$ has corresponding score function $\psi_0^{\mathrm{ML}}$ satisfying
\[
\frac{V_{f_0}(\psi_0^*)}{V_{f_0}(\psi_0^{\mathrm{ML}})} \leq \epsilon \quad\text{and}\quad \mathrm{ARE}^*(f_0) \geq 1 - \epsilon.
\]
\end{proposition}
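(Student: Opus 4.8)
The plan is to exhibit a one-parameter family $(f_\eta)_{\eta\in(0,1)}$ of error densities for which, as $\eta\downarrow0$, $\mathrm{ARE}^*(f_\eta)\to1$ while $V_{f_\eta}(\psi_0^*)/V_{f_\eta}(\psi_0^{\mathrm{ML}})\to0$ (all quantities computed with $f_0=f_\eta$); the proposition then follows on taking $\eta$ small enough. The structural fact I would exploit is that, since $f_0$ is absolutely continuous with $i(f_0)<\infty$, the expansion of $D_{f_0}(\cdot)$ preceding Lemma~\ref{lem:psi0-star} gives $\int_{\mathcal{S}_0}f_0\,d\psi = -\langle\psi,\psi_0\rangle_{L^2(P_0)}$, so that
\[
V_{f_0}(\psi) = \frac{\|\psi\|_{L^2(P_0)}^2}{\langle\psi,\psi_0\rangle_{L^2(P_0)}^2} = \frac{1}{i(f_0)\cos^2\theta},
\]
where $\theta$ is the angle in $L^2(P_0)$ between $\psi$ and the true score $\psi_0$. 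Since every constant function is orthogonal to $\psi_0$ in $L^2(P_0)$ (because $\int\psi_0\,dP_0=\int f_0'=0$), it follows that $V_{f_0}(\psi_0^{\mathrm{ML}})$ is enormous exactly when $\psi_0^{\mathrm{ML}}$ is nearly constant, i.e.\ $f_0^{\mathrm{ML}}$ is nearly log-affine --- equivalently, the $f_0$-weighted total curvature $\langle\psi_0^{\mathrm{ML}},\psi_0\rangle_{L^2(P_0)}=\int_{\mathcal{S}_0}f_0\,d(-\psi_0^{\mathrm{ML}})$ of $\log f_0^{\mathrm{ML}}$ is tiny --- whereas $V_{f_0}(\psi_0^*)=1/i^*(f_0)$ stays bounded exactly when the antitonic projection recovers a fixed fraction of $\psi_0$, which is also what forces $\mathrm{ARE}^*(f_\eta)=i^*(f_\eta)/i(f_\eta)\to1$.

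Thus the family must be built so that $f_\eta$ is \emph{simultaneously} far enough from the log-concave class that its Kullback--Leibler projection $f_\eta^{\mathrm{ML}}$ is dragged towards a log-affine density (its curvature measure becoming negligible), and yet only mildly non-log-concave when measured through the $L^2(P_0)$ geometry of the score, so that the antitonic projection recovers almost all of $\psi_0$. The reconciliation should come from siting the failure of log-concavity where $f_\eta$ carries little mass \emph{and} where the score is small; one candidate mechanism is to let $f_\eta$ agree with a fixed log-concave density on a high-probability set and to have, further out, a region decaying more slowly than any exponential (so that no log-concave density can match it and the maximum likelihood fit must flatten), while $\psi_0=f_0'/f_0$ is small throughout that region, so that the corresponding excursion of the least concave majorant of the density quantile function $J_\eta$ of Lemma~\ref{lem:psi0-star} contributes only $o(1)$ to $i(f_\eta)-i^*(f_\eta)$.

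With such a family in hand, the verification runs as follows: (1) \emph{regularity} --- from the explicit form of $f_\eta$ one reads off absolute continuity, finiteness of the mean, and $i(f_\eta)<\infty$, the only delicate point being the behaviour at the edge of the support, which must be chosen so that $f_\eta'^2/f_\eta$ is integrable there; (2) \emph{antitonic side} --- passing to $J_\eta$, on each excursion $(a,b)$ of its least concave majorant $\hat J_\eta$ one has $\int_a^b(J_\eta')^2-\int_a^b(\hat J_\eta')^2=(b-a)\,\mathrm{Var}_{U(a,b)}(J_\eta')$, which is $o(1)$ by construction, while $i(f_\eta)$ is bounded away from $0$, so $\mathrm{ARE}^*(f_\eta)\to1$ and $V_{f_\eta}(\psi_0^*)=1/i^*(f_\eta)$ stays bounded; (3) \emph{maximum likelihood side} --- using the first-order characterisation of Lemma~\ref{Lemma:Basic}, one tests affine perturbation directions (pinning down the scale of $f_\eta^{\mathrm{ML}}$ via its mean) and the tent directions $x\mapsto\min(x,a)$, $x\mapsto-(x-s)_+$ (reducing optimality to inequalities between integrated distribution functions of $f_\eta$ and $f_\eta^{\mathrm{ML}}$), and one shows that the projection is log-affine up to a curvature that vanishes with $\eta$ and is concentrated where $f_\eta$ is small, so that $\langle\psi_0^{\mathrm{ML}},\psi_0\rangle_{L^2(P_0)}\to0$ while $\|\psi_0^{\mathrm{ML}}\|_{L^2(P_0)}$ stays bounded below, giving $V_{f_\eta}(\psi_0^{\mathrm{ML}})\to\infty$; (4) combine.

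The hard part is step (3): because $\mathcal{F}_1$ is not convex and the log-concave projection has no closed form, one must argue directly from the variational inequalities that the projection's curvature measure ``escapes'' to the low-mass part of the support, leaving an essentially log-affine density --- and this has to be done while keeping $f_\eta$ close enough to log-concave in the score sense that the antitonic relative efficiency does not deteriorate; the two requirements pull in opposite directions, and calibrating $f_\eta$ to satisfy both at once is the real content of the proposition. A secondary point requiring care is the degenerate cases in~\eqref{eq:Vp0}: one must keep $i^*(f_\eta)$ bounded away from $0$, and decide whether to accept the clean conclusion $V_{f_\eta}(\psi_0^{\mathrm{ML}})=\infty$ (obtained when $f_\eta^{\mathrm{ML}}$ is exactly log-affine, though then the associated $M$-estimator is degenerate) or to round the construction so as to obtain a large but finite downstream asymptotic variance.
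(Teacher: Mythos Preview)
The survey paper does not contain a proof of this proposition; it is quoted from \citet{feng2025optimal} and immediately followed by the ``wider moral'' paragraph, so there is no in-paper argument against which to compare your attempt.

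Evaluated on its own terms, your proposal is a coherent strategic outline but not a proof. The geometric reformulation $V_{f_0}(\psi)=1/\bigl(i(f_0)\cos^2\theta\bigr)$ is correct and clarifying, and the diagnosis --- that one needs $f_0^{\mathrm{ML}}$ to be nearly log-affine (so that $\psi_0^{\mathrm{ML}}$ is nearly constant, hence nearly $L^2(P_0)$-orthogonal to $\psi_0$) while the antitonic projection $\psi_0^*$ still captures almost all of $\psi_0$ --- is exactly the right target. However, two essential pieces are missing. First, you never actually write down the family $(f_\eta)$: ``one candidate mechanism is to let $f_\eta$ agree with a fixed log-concave density on a high-probability set and to have, further out, a region decaying more slowly than any exponential'' is a description of desiderata, not a construction, and the tension you yourself flag (the perturbation must be violent enough to flatten the KL projection yet mild enough in the score metric) cannot be resolved without an explicit formula. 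Second, your step~(3) is the entire content of the result, and you leave it open: the variational inequalities from Lemma~\ref{Lemma:Basic} give necessary conditions on $f_\eta^{\mathrm{ML}}$, but turning these into the quantitative statement that its curvature measure vanishes as $\eta\downarrow 0$ requires either an explicit computation of the projection for your chosen family or a compactness/limiting argument, neither of which you supply. As written, the proposal identifies the correct mechanism and the correct difficulty, but stops short of the construction and the calculation that would constitute a proof.
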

The wider moral for shape-constrained estimation is that the notion of projection onto a shape-constrained class should be tailored to the task at hand.

Returning to our original linear regression problem, a natural estimation strategy on the population level is to alternate between the following two steps:
\begin{enumerate}[(i)]
\item For a fixed $\beta$, minimise the (convex) score matching objective $D_{q_\beta}(\psi)$ based on the density $q_\beta$ of $Y_1 - X_1^\top\beta$.
\item For a fixed decreasing and right-continuous $\psi$, minimise the convex function $\beta \mapsto \mathbb{E}\ell(Y_1 - X_1^\top \beta)$, where $\ell$ is a negative antiderivative of $\psi$.
\end{enumerate}
\citet{feng2025optimal} establish that, in the case where $f_0$ is symmetric and satisfies mild regularity conditions, an appropriate sample version of this algorithm yields an estimator $\hat{\beta}_n$ of $\beta_0$ with
\[
\sqrt{n}(\hat{\beta}_n - \beta_0) \stackrel{d}{\rightarrow} N_d\biggl(0, \frac{\{\mathbb{E}(X_1 X_1^\top)\}^{-1}}{i^*(f_0)}\biggr)
\]
as $n \rightarrow \infty$.  A similar result holds without the symmetry assumption on $f_0$, but where an explicit intercept term is present in the linear model.  Thus, $\hat{\beta}_n$ is $\sqrt{n}$-consistent and has the same limiting Gaussian distribution as the `oracle' convex $M$-estimator $\hat{\beta}_{\psi_0^*} := \argmin_{\beta \in \mathbb{R}^d} \sum_{i=1}^n \ell_0^*(Y_i - X_i^\top\beta)$, where $\ell_0^*$ denotes an optimal convex loss function with right derivative $\psi_0^*$.  In this sense, it is \emph{antitonically efficient}.

\section{Other modern applications of shape constraints}
\label{Sec:OtherExamples}

\subsection{Isotonic subgroup selection}

In regression settings, subgroup selection refers to the challenge of identifying a subset of the covariate domain on which the regression function satisfies a particular property of interest.  This is a post-selection inference problem, since the region is to be selected after seeing the
data, and yet we still wish to claim that with high probability, the regression function satisfies this property on the selected set. Important applications can be found in precision medicine, for instance, where the chances of a desirable health outcome may be highly heterogeneous
across a population, and hence the risk for a particular individual may be masked in a study representing the entire population.

A natural strategy for identifying such group-specific effects is to divide a study into two stages, where the first stage is used to identify a potentially interesting subset of the covariate domain, and the second attempts to verify that it does indeed have the desired property \citep{stallard2014adaptive}.  However, such a two-stage process may often be both time-consuming and potentially expensive due to the inefficient use of the data, and moreover the binary second-stage verification may fail. In such circumstances, we are unable to identify a further subset of the original selected set on which the property does hold.

In many applications, heterogeneity across populations may be characterised by monotonicity of a regression function in individual covariates. For instance, for individuals with hypertrophic cardiomyopathy, risk factors for sudden cardiac death (SCD) include family history of SCD, maximal heart wall thickness and left atrial
diameter \citep{omahony2014novel}. It is frequently of interest to identify a subset of the population deemed to be at low or high risk, for instance to determine an appropriate course of treatment. This amounts to identifying an appropriate superlevel set of the regression
function.

\citet{muller2025isotonic} introduce a framework that allows the identification of the $\tau$-superlevel set of an isotonic regression function, for some pre-determined level $\tau$. A key component of their formulation of the problem is to recognise that often there is an asymmetry to the
two errors of including points that do not belong to the superlevel set, and failing to include points that do. For instance, in the case of hypertrophic cardiomyopathy, a false conclusion that an individual is at low risk of sudden cardiac death within five years, and hence does not require an implantable cardioverter defibrillator \citep{omahony2014novel}, is more serious than the opposite form of error, which obliges a patient to undergo surgery and deal with the inconveniences of the implanted device.

Suppose that we are given $n$ independent copies of a covariate-response pair $(X,Y)$ having a distribution on $\mathbb{R}^d \times \mathbb{R}$ with coordinate-wise increasing regression function~$\eta$ given by $\eta(x) := \mathbb{E}(Y | X=x)$ for $x\in \mathbb{R}^d$.  Given a threshold $\tau \in \mathbb{R}$, and with  $\mathcal{X}_\tau(\eta) := \{x \in \mathbb{R}^d:\eta(x) \geq \tau\}$ denoting the $\tau$-superlevel set of $\eta$, the goal is to output an estimate $\hat{A}$ of $\mathcal{X}_\tau(\eta)$ with the first priority that it guards against the more serious of the two errors mentioned above.  Without loss of generality, this more serious error may be taken to be that of including points in $\hat{A}$ that do not belong to $\mathcal{X}_\tau(\eta)$, and we therefore require Type~I error control in the sense that $\hat{A} \subseteq \mathcal{X}_\tau(\eta)$ with probability at least $1-\alpha$, for some pre-specified $\alpha \in (0,1)$.  Subject to this constraint, we seek to maximise $\mu(\hat{A})$, where $\mu$ denotes the marginal distribution of~$X$.

The method of \citet{muller2025isotonic}, as implemented in the \texttt{R} package \texttt{ISS} \citep{Mueller2023ISS}, seeks to compute at each observation a $p$-value for the null hypothesis that the regression function is below $\tau$ based on an anytime-valid martingale procedure \citep{howard2021uniform}.  The monotonicity of the regression function implies logical relationships between these hypotheses, and \citet{muller2025isotonic} introduce a tailored multiple testing procedure with familywise error rate control.  The final output set $\hat{A}^{\mathrm{ISS}}$ is the upper hull of the observations corresponding to the rejected hypotheses.  An illustration in a bivariate example is given in Figure~\ref{fig:simple_illustration}.

\begin{figure}[htbp]
    \center
    \includegraphics[trim={2cm 2cm 2cm 5cm}, clip, width=\textwidth]{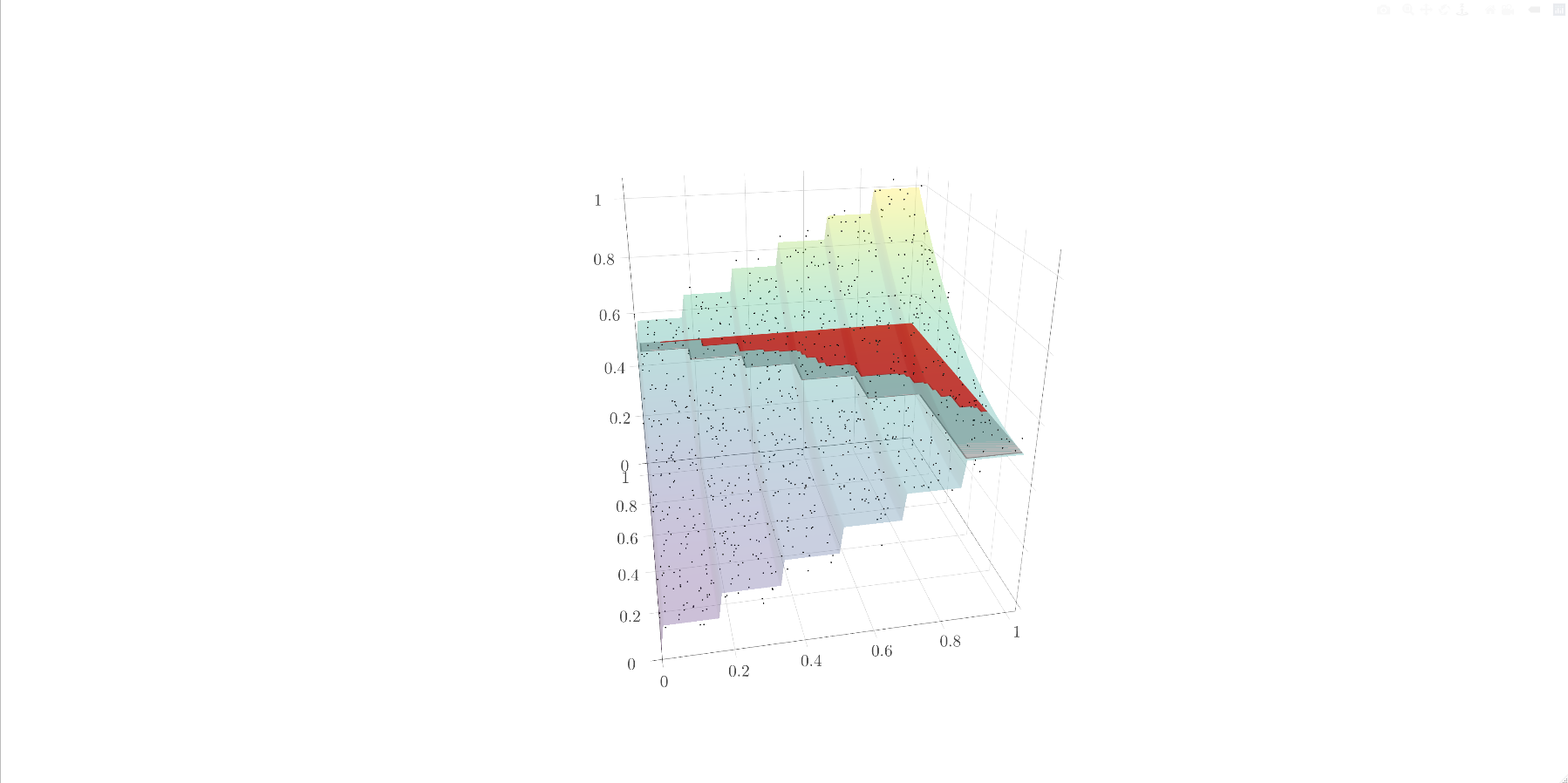}
    \vspace{-0.4cm}
    \caption{A visualisation with $d=2$ and $n = 1000$. The unknown regression function is depicted by the multi-coloured surface. The grey surface gives the $0.5$-super-level set, of which the red area is selected by $\hat{A}^\mathrm{ISS}$. }
    \label{fig:simple_illustration}
\end{figure}

\citet{muller2025isotonic} verify that $\hat{A}^{\mathrm{ISS}}$ does indeed control Type~I error in the sense outlined above.  Moreover, they provide a bound on $E\bigl\{\mu\bigl(\mathcal{X}_\tau(\eta) \setminus \hat{A}^{\mathrm{ISS}}\bigr)\bigr\}$, which in combination with a corresponding minimax lower bound reveals that $\hat{A}^{\mathrm{ISS}}$ minimises this expected regret up to poly-logarithmic factors, among all procedures that control the Type~I error.  The method, which is tuning-free, therefore offers a practical alternative to approaches that exploit smoothness of the regression function \citep[e.g.][]{reeve2023optimal}.

\subsection{Testing conditional independence}

Testing conditional independence underpins the problems of variable selection, graphical modelling and causal inference.  To formalise the setting, consider the null hypothesis
\[
H_0^{\mathrm{CI}}: X \ci Y \mid Z,
\]
where $X$ and $Y$ are variables of interest (such as a treatment $X$ and an outcome $Y$), while~$Z$ represents a (potentially high-dimensional) confounder.  Our available data consist of independent copies $(X_1,Y_1,Z_1),\ldots,(X_n,Y_n,Z_n)$ of $(X,Y,Z) \sim P$, for some unknown distribution $P$ on $(\mathcal{X},\mathcal{Y},\mathcal{Z})$.   The following remarkable result, however, illustrates that the problem of conditional independence testing is fundamentally~hard. 

\begin{theorem}[\citealp{shah2020hardness}]
\label{Thm:Hardness}
    Let $\mathcal{P}_{\mathrm{AC}}$ denote the set of distributions on $\mathbb{R}^d$ that are absolutely continuous distributions with respect to Lebesgue measure.  For any $\alpha\in (0,1)$, any test of the null $H_0^{\mathrm{CI}} \cap \mathcal{P}_{\mathrm{AC}}$ with Type~I error level $\alpha$ has power no greater than $\alpha$ at every alternative distribution in $\mathcal{P}_{\mathrm{AC}} \setminus H_0^{\mathrm{CI}}$.
\end{theorem}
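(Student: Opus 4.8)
The plan is to take an arbitrary, possibly randomised test, identified with a measurable function $\phi : (\mathbb{R}^d)^n \to [0,1]$ giving the rejection probability and satisfying $\mathbb{E}_{P^{\otimes n}}(\phi) \leq \alpha$ for every $P \in H_0^{\mathrm{CI}} \cap \mathcal{P}_{\mathrm{AC}}$, to fix an arbitrary alternative $Q \in \mathcal{P}_{\mathrm{AC}} \setminus H_0^{\mathrm{CI}}$, and to show that $\mathbb{E}_{Q^{\otimes n}}(\phi) \leq \alpha$. The entire argument reduces to constructing a sequence $(P_m)_{m \in \mathbb{N}}$ in $H_0^{\mathrm{CI}} \cap \mathcal{P}_{\mathrm{AC}}$ along which the joint law of the data converges to $Q^{\otimes n}$ in the strong (setwise) sense:
\[
  \mathbb{E}_{P_m^{\otimes n}}(\psi) \to \mathbb{E}_{Q^{\otimes n}}(\psi) \quad \text{as } m \to \infty,
\]
for every bounded measurable $\psi : (\mathbb{R}^d)^n \to \mathbb{R}$. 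Given such a sequence, taking $\psi = \phi$ immediately yields $\mathbb{E}_{Q^{\otimes n}}(\phi) = \lim_{m} \mathbb{E}_{P_m^{\otimes n}}(\phi) \leq \alpha$. It is worth stressing that one cannot strengthen this to convergence in total variation: if $Q \notin H_0^{\mathrm{CI}}$ then, on a set of $z$ of positive probability under the $Z$-marginal of $Q$, the conditional law of $(X,Y)$ given $Z = z$ is not a product measure and so lies at a fixed positive total variation distance from the (total-variation-closed) family of product measures, which forces $\inf\{\mathrm{TV}(P,Q) : P \in H_0^{\mathrm{CI}} \cap \mathcal{P}_{\mathrm{AC}}\} > 0$. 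A level-$\alpha$ guarantee nevertheless propagates the weaker mode of convergence above, and that is exactly what makes the theorem possible.

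The sequence $(P_m)$ is obtained by hiding the conditional dependence of $Q$ inside a rapidly oscillating reparametrisation of the conditioning variable. Writing $q$ for a Lebesgue density of $Q$ and fixing a fine partition $\{K_k\}$ of $\mathbb{R}^{d_Z}$ into small cubes, on each $K_k$ the map $(x,y) \mapsto q(x,y,z)$ is, for all but a negligible set of $z \in K_k$, close in $L^1$ to a single bivariate density $r_k$; and any bivariate density admits an $L^1$-approximation by a finite mixture $\sum_{\ell} \lambda^{(k)}_\ell\, a^{(k)}_\ell \otimes b^{(k)}_\ell$ of product densities, say by discretising the plane into product cells. I realise this mixture pathwise in $z$: subdivide $K_k$ into measurable pieces $\{K_k^{(\ell)}\}$ with Lebesgue measures proportional to $\lambda^{(k)}_\ell$, let $Z$ be distributed approximately as under $Q$, and on $K_k^{(\ell)}$ prescribe the conditional law of $(X,Y)$ given $Z = z$ to be $a^{(k)}_\ell \otimes b^{(k)}_\ell$. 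Since this conditional law is a product measure for \emph{every} value of $z$, the resulting $P_m$ satisfies $X \ci Y \mid Z$; since it is assembled entirely from Lebesgue densities, $P_m \in \mathcal{P}_{\mathrm{AC}}$; and since the pieces $\{K_k^{(\ell)}\}$ can be arranged so that the active-component function $z \mapsto \ell(z)$ oscillates at a frequency tending to infinity as the partitions are refined along $m$, a Riemann--Lebesgue-type averaging shows that, integrated against any fixed bounded measurable function of the $n$ sample points, the oscillations in the $n$ copies of the $Z$-coordinate cancel and leave the mixture, that is, an arbitrarily good approximation to $Q^{\otimes n}$. Refining the partitions along $m$ then yields the displayed convergence.

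The main obstacle is precisely this last averaging step, and the fact that it must be carried out against arbitrary bounded \emph{measurable} functions rather than merely continuous ones: it is the need to accommodate general measurable tests $\phi$ that rules out any soft weak-convergence argument --- which would require an upper semicontinuity hypothesis on $\phi$ --- and forces the oscillation device, since one must show that a rapidly oscillating factor in a single coordinate integrates to zero against the merely measurable function obtained by freezing the other coordinates, and then combine these statements across the $n$ coordinates by a dominated convergence argument. A secondary and more routine difficulty is to make the two $L^1$-approximations uniform enough --- truncating to a large ball to control the tails of $Q$ on $\mathbb{R}^d$, and renormalising so that each $P_m$ is a genuine probability density --- which presents no real trouble under the absolute continuity hypothesis. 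No property of $\phi$ beyond its level $\alpha$ enters anywhere, which is what makes the impossibility so comprehensive.
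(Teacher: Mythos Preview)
The paper does not prove this theorem: it is stated with a citation to Shah and Peters (2020) and immediately followed by the remark that ``some further restriction of the null hypothesis is necessary for a non-trivial test'', with no argument supplied. There is therefore nothing in the paper against which to compare your attempt.

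For completeness, your sketch is broadly faithful to the original Shah--Peters construction. The central device --- encoding the mixture component of a product-mixture approximation to the conditional law of $(X,Y)$ given $Z$ into a rapidly oscillating function of $Z$ itself, so that conditional independence holds pointwise in $z$ while the oscillation averages out against any fixed bounded measurable test --- is precisely the mechanism used there. Your observation that total-variation approximation is ruled out (the conditional-independence null being closed in total variation) and that one must instead establish setwise convergence of $P_m^{\otimes n}$ to $Q^{\otimes n}$ correctly identifies the crux of the matter. The steps you label as routine (truncation, renormalisation, and the iterated Riemann--Lebesgue/dominated-convergence argument across the $n$ coordinates) do require care in a full proof but present no essential obstacle.
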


The lesson from Theorem~\ref{Thm:Hardness} is that some further restriction of the null hypothesis is necessary for a non-trivial test.  \citet{hore2025testing} proceed as follows:

\begin{customasm}{1}
\label{Ass:SI}
    Let $\mathcal{X} \subseteq \mathbb{R}$ and let $\preceq$ be a partial order on $\mathcal{Z}$.  Assume that $X$ is stochastically increasing in~$Z$, meaning that if $z\preceq z'$ then $\mathbb{P}(X \geq x \, | \, Z=z) \leq \mathbb{P}(X \geq x \, | \, Z=z')$ for all $x$.
\end{customasm}

This assumption is motivated by applications, particularly in biomedicine, where for instance factors such as smoking intensity may be associated with increased risk of certain diseases or conditions.  The idea for a test of the isotonic conditional independence null $H_0^{\mathrm{ICI}}$, i.e.~distributions satisfying conditional independence and Assumption~\ref{Ass:SI}, is to consider carefully-chosen pairs of data points $(X_i,Y_i,Z_i)$ and $(X_j,Y_j,Z_j)$ with $Z_i \preceq Z_j$.  Under $H_0^{\mathrm{ICI}}$, one expects $X_i \leq X_j$ more often than not, and a substantial violation of this provides evidence of the influence of $Y$, i.e.~evidence against $H_0^{\mathrm{ICI}}$.  To calibrate the test appropriately under the null, \citet{hore2025testing} employ a particular type of permutation test, where permutations are restricted within matched pairs.  The resulting \texttt{PairSwap-ICI} procedure guarantees finite-sample Type I error control over $H_0^{\mathrm{ICI}}$, and the power properties are characterised under a broad family of regression models for $X$ conditional on $(Y,Z)$.

\section{Outlook and open problems}

Looking to the future, we see great further potential for shape constraints to be incorporated into other common statistical tasks.  Given the scale and complexities of modern data sets now routinely collected, the flexibility of nonparametric approaches is extremely valuable.  Shape constraints often offer a viable and sometimes more appealing alternative to methods that rely on the smoothness of an unknown function, and moreover we may be able to eschew a delicate choice of tuning parameters.  The new approach to linear regression outlined in Section~\ref{Sec:LinearRegression} offers a glimpse of the aptitude of shape-constrained ideas in semiparametric problems, and we anticipate many further developments in related directions.

We conclude by mentioning four open problems related to log-concave density estimation (Section~\ref{Sec:LogConcaveDensityEstimation}):
\begin{enumerate}
\item Regarding computation of the log-concave MLE $\hat{f}_n$, is it possible to exploit a warm start if a new data point is added, or one is deleted?  The convex hull of the data can be triangulated into simplices on which $\log \hat{f}_n$ is affine, but at this time it is unknown how this structure is modified under perturbations of the data.
\item Suppose that $d \geq 2$, and that our data may be observed with missingness in some coordinates.  How can we best exploit the data with partial observations?  In an extreme version of this problem, we might assume that the marginal log-concave densities were known.
\item What can we say about the theoretical properties of the smoothed log-concave MLE?
\item What can we say about the boundary behaviour of the log-concave MLE in the multivariate case?  Recent work of \citet{ryter2024tails} provides some key answers in the univariate case.
\end{enumerate}

\textbf{Acknowledgements}: This research was supported by European Research Council Advanced Grant 101019498.

\bibliographystyle{apalike}
\bibliography{bib}

\end{document}